\newtheorem{theorem}{Theorem}[section]
\newtheorem{lemma}[theorem]{Lemma}
\newtheorem{proposition}[theorem]{Proposition}
\newtheorem{corollary}[theorem]{Corollary}
\long\def\alert#1{\smallskip{\hskip\parindent\vrule%
\vbox{\advance\hsize-2\parindent\hrule\smallskip\parindent.4\parindent%
\narrower\noindent#1\smallskip\hrule}\vrule\hfill}\smallskip}
\theoremstyle{definition}
\newtheorem{definition}[theorem]{Definition}
\newtheorem{example}[theorem]{Example}
\newtheorem{examples}[theorem]{Examples}
\newtheorem{discussion}[theorem]{Discussion}
\newtheorem{remark}[theorem]{Remark}
\newtheorem{question}[theorem]{Question}
\numberwithin{equation}{section}
\newtheorem{remark/questions}[theorem]{Remark and Questions}
\newtheorem{fact}[theorem]{Fact}
\newtheorem{remarks}[theorem]{Remarks}
 \long\def\alert#1{\smallskip\line{\hskip\parindent\vrule
\vbox{\advance\hsize-2\parindent\hrule\smallskip\parindent.4\parindent
  \narrower\noindent#1\smallskip\hrule}\vrule\hfill}\smallskip}
\def\frac#1#2{{{#1}\over{#2}}}
\def\endresult#1{\medskip}
\def \Q{{{\mathbb Q }}}
  \def \N{{\mathbb  N}}
\def\q{{\bf q}}
\def\p{{\bf p}}
\def\m{{\bf m}}
\def\n{{\bf n}}
\def\dim{\mathop{\rm dim}}
\def\Spec{\mathop{\rm Spec}}
\def\spec{\mathop{\rm Spec}}
\def\hgt{\mathop{\rm ht}}
\begin{document}

\baselineskip 15 pt

\title[Examples of non-Noetherian domains]{
Examples of non-Noetherian domains \\ inside power series rings }
\author{William Heinzer}
\address{Department of Mathematics, Purdue University, West
Lafayette, Indiana 47907}

\email{heinzer@math.purdue.edu}

\author{Christel Rotthaus}
\address{Department of Mathematics, Michigan State University, East
  Lansing,
   MI 48824-1027}

\email{rotthaus@math.msu.edu}

\author{Sylvia Wiegand}
\address{Department of Mathematics, University of Nebraska, Lincoln,
NE 68588-0130}

\email{swiegand@math.unl.edu}

%    General info
\subjclass{ Primary 13B35, 13J10, 13A15}

%\date{September 13 2012}

%\dedicatory{This paper is dedicated to our authors.}

\keywords{power series, Noetherian and non-Noetherian integral domains.}

   \thanks{The authors are grateful for
  the hospitality, cooperation and support of Michigan State,
  Nebraska,
   Purdue and the CIRM at Luminy, France, where several work sessions on this research were
   conducted.}

   \begin{abstract}
Given a power series ring $R^*$ over a Noetherian
integral domain $R$ and an intermediate field $L$ between $R$ and
the total quotient ring of $R^*$, the integral domain $A = L \cap
R^*$ often (but not always) inherits nice properties from $R^*$
such as the Noetherian property. For certain fields $L$ it is possible to
approximate $A$ using a localization $B$ of a particular nested
union of polynomial rings over $R$ associated to $A$;  if $B$ is
Noetherian then $B = A$. If $B$ is not Noetherian, we can
sometimes identify   the  prime ideals of $B$ that are not
finitely generated.  We have obtained in this way,  for
each positive integer $m$, a three-dimensional local unique
factorization domain $B$ such that   the
maximal ideal of $B$ is two-generated, $B$ has precisely $m$ prime
ideals of height two, each prime ideal of $B$ of height two is not
finitely generated and all the other prime ideals of $B$ are finitely
generated. We examine the structure of the map $\Spec A \to \Spec
B$ for this example. We also present a generalization of this example
to  dimension four. This four-dimensional non-Noetherian local
unique factorization domain has exactly one prime ideal $Q$ of height
three, and $Q$ is not finitely generated.

\end{abstract} \maketitle

\section{Introduction}\label{intro}

In this paper we analyze the prime
ideal structure of particular non-Noetherian integral domains
arising from a general construction developed in
our earlier papers \cite{noesloc},\dots, \cite{nonfgprimes}. With
this technique two types of integral domains are constructed: 

(1)
The intersection of an ideal-adic completion $R^*$ of a Noetherian integral domain
$R$  with an appropriate  subfield of the total quotient 
ring of $R^*$  yields an integral domain $A$ as in the abstract, and 

(2) An approximation
of the domain $A$  by a nested union $B$ of localized polynomial
rings has the second form described in the abstract.

Recently there has been considerable interest in non-Noetherian
analogues of Noetherian notions such as the concept of ``regular"
ring. Rotthaus and Sega have shown that the rings $A$ and $B$
produced in the general construction are {\it coherent regular}
local rings in the sense that every finitely generated
submodule of a
free module has a finite free resolution;
see \cite{RS} and  Remark~\ref{coherence}.

 We construct in this paper rings that are not Noetherian but are very close to being Noetherian,
in that localizations at most prime ideals are Noetherian and most prime ideals are
finitely generated; sometimes  just one prime ideal is not finitely generated. If a ring
has exactly one prime ideal that is not finitely generated, that prime ideal contains all
nonfinitely generated ideals of
the ring. 

This article expands upon  previous work of the authors  where we 
construct non-Noetherian local domains of dimension $d \ge 3$ 
with some of these properties  \cite{insideps} and \cite{nonfgprimes}.
In the case of  dimensions three
and four, we give 
considerably more detail in this article 
 about  these non-Noetherian domains.  
In particular we  categorize the height-one primes of the three-dimensional example in terms 
of  the spectral map from $A$ to $B$.

In Section~\ref{2} we describe examples of three-dimensional non-Noetherian 
non-catenary unique factorization
domains.   Another  example of a three-dimensional non-Noetherian 
unique factorization domain is given by John David in \cite{David}.  
The examples given in Examples~\ref{16.5.1} 
are very close to being Noetherian.  
We give more details about a specific case
where there is precisely one nonfinitely generated prime ideal in Example~\ref{16.5.3de}.
In Section~\ref{3} we give
background results that apply in a more general setting. Our main results are in Sections~\ref{4}
and \ref{5}. Section~\ref{4} contains the verification of the properties of the three-dimensional
examples. 

In Example~\ref{16.5.4de} of Section~\ref{5},  we construct 
a four-dimensional non-Noetherian non-catenary local
unique factorization domain $B$ that again is close to being Noetherian.
The ring $B$  has exactly one prime ideal $Q$ of height
three, and $Q$ is not finitely generated. We leave open the question of 
whether there exist any prime ideals of $B$ of height two that are not
finitely generated. Following a suggestion of the referee,  we use a ``$D+M$'' 
construction to obtain in Example~\ref{referee} a four-dimensional non-Noetherian non-catenary local
domain $C$;  the maximal ideal of $C$  is principal and is the only nonzero 
finitely generated prime ideal of $C$.

All rings we consider are assumed to be commutative with identity. A general
reference for our notation and terminology is \cite{M}. We abbreviate unique factorization domain
 to  UFD, regular local domain to RLR  and  discrete rank one valuation domain to DVR.

\section{A family of examples in dimension 3} \label{2}

In this section we construct examples as described in Examples~\ref{16.5.1}. In
the next section
we give a diagram and more detail for a special case of the example with exactly one nonfinitely generated prime ideal.

\begin{examples}\label{16.5.1}
For each positive integer $m$, we construct an example of a non-Noetherian
 local integral  domain $(B, \n)$
such that:
\begin{enumerate}
\item $\dim B = 3$.
\item The ring $B$ is a UFD that is not catenary.
\item The maximal ideal  $\n$ of $B$ is 2-generated.
\item The $\n$-adic completion of $B$ is a two-dimensional regular local domain.
 \item For every non-maximal prime ideal $P$ of $B$,  the ring $B_P$ is Noetherian.
\item The ring $B$ has precisely $m$ prime ideals of height two.
\item Each prime ideal of $B$ of height two is not
finitely generated; all other prime ideals of $B$ are finitely generated.
\end{enumerate}
\end{examples}

To establish the existence of the examples in Example~\ref{16.5.1},
we use the following notation.
Let $k$ be a field, let $x$ and $y$  be indeterminates
over $k$, and set
$$
R : ~~=~~  k[x, y]_{(x,y)},   \quad K~:= ~k(x,y) \quad\text{and} \quad R^*:~~=~~k[y]_{(y)}[[x]].
$$
The power series ring  $R^*$ is  the $xR$-adic completion of $R$.  Let $\tau\in
xk[[x]]$ be  transcendental over $k(x)$.
For each integer $i$ with $1 \le i \le m$, let $p_i \in R \setminus  xR$
 be such
that $p_1R^*, \ldots, p_mR^*$ are $m$  prime ideals.
For  example, if each $p_i\in R \setminus (x,y)^2R$, then each  $p_iR^*$ is prime in $R^*$.
In particular one could take $p_i = y - x^i$.
Let $p := p_1 \cdots p_m$. We set  $f := p\tau$ and consider
the injective $R$-algebra homomorphism $S := R[f] \hookrightarrow
R[\tau] =: T$.  In this construction the polynomial rings $S$ and $T$ have
the same field of fractions $K(f) = K(\tau)$. The intersection domain
\begin{equation*}
A ~:= ~ R^* ~\cap ~ K(f)  ~ = ~ R^* ~\cap ~ K(\tau)
\tag{\ref{16.5.1}.0}
\end{equation*}
 is a two-dimensional regular local domain with maximal
ideal $(x,y)A$ and the $(x,y)A$-adic completion of $A$ is $k[[x, y]]$,   \cite{V}.

Let $\tau := c_1x + c_2x^2 + \cdots + c_ix^i + \cdots \in xk[[x]]$, where
the $c_i\in k$ and, for each non-negative integer $n$, define the
``$n^{\text{th}}$ endpiece'' $\tau_n$ of $\tau$  by
\begin{equation}
\tau_n ~:= ~\sum_{i=n+1}^\infty c_ix^{i-n} ~ = ~\frac{\tau - \sum_{i=1}^nc_ix^i}{x^n}.
\tag{(\ref{16.5.1}.a)}
\end{equation}
We have the following relation between $\tau_n$ and $\tau_{n+1}$ for each $n$:
\begin{equation}
\tau_n ~ = ~ c_{n+1}x ~+~ x\tau_{n+1}. \tag{\ref{16.5.1}.b}
\end{equation}
Define $f_n := p\tau_n$, set  $U_n= R[f_n]=k[x,y]_{(x,y)}[f_n]$, a 3-dimensional
polynomial ring  over $R$, and set $B_n=(U_n)_{(x,y,f_n)}=k[x,y,f_n]_{(x,y,f_n)}$,
a 3-dimensional localized
polynomial ring. Similarly set $U_ {\tau n}=R[\tau_n]=k[x,y]_{(x,y)}[\tau_n]$, a
3-dimensional
polynomial ring containing $U_n$,  and  $B_{\tau n}=k[x,y,\tau_n]_{(x,y,\tau_n)}$,
a localized polynomial ring  containing $U_{\tau n}$ and $B_n$.
Let $U, B, U_\tau$ and $B_\tau$ be the nested union domains  defined as follows:
$$
U ~ :=  ~ \bigcup_{n=0}^\infty U_n  ~ \subseteq ~ U_\tau~ :=  ~ \bigcup_{n=0}^\infty U_{\tau n};
 ~ \qquad B ~ :=  ~ \bigcup_{n=0}^\infty B_n  ~ \subseteq ~ B_\tau~ :=
 ~ \bigcup_{n=0}^\infty B_{\tau n}  ~ \subseteq ~ A.
$$
\begin{remark} \label{16.5.11}
By Equation \ref{16.5.1}.b, $k[x,y,f_{n+1}]\subseteq k[x,y,f_n][1/x]$ for each $n$.
Thus $k[x, y, f][1/x] ~= ~ k[x, y, f_n][1/x] $
and
\begin{equation*}
~ ~U[1/x]=R[f][1/x] ;\qquad ~ ~\qquad ~ ~U_\tau[1/x]=R[\tau][1/x] . \tag{\ref{16.5.11}.0}
\end{equation*}
Hence, for each $n$, the ring $B_n[1/x]$ is a localization of $S =U_0= R[f]$. It
follows that $B[1/x]$ is a localization of $S$ and $B[1/x]$ is a localization of $B_n$.
Similarly, $B_\tau[1/x]$ is a
localization of $T = R[\tau]$.\end{remark}

We establish in Theorem~\ref{16.5.2} of Section~\ref{4} that the rings $B$ of
Examples~\ref{16.5.1} have
 properties 1 through 7 and also some additonal properties.

Assuming  properties 1 through 7 of Examples~\ref{16.5.1}, we describe the ring $B$
of Examples~\ref{16.5.1}
in the case where $m = 1$ and $p = p_1 = y$.

\begin{example} \label{16.5.3de} Let the notation be as in Examples~\ref{16.5.1}.
Thus
$$
R=k[x,y]_{(x,y)}, \quad f=y\tau, \quad f_n=y\tau_n, \quad B_n =
R[y\tau_n]_{(x, y, y\tau_n)}, \quad B = \bigcup_{n=0}^\infty B_n.
$$
  As we show in Section~4, this ring $B$ has exactly one prime ideal
$Q := (y,\{y\tau_n \}_{n
  =0}^\infty)B$   of height 2. Moreover, $Q$ is not finitely
  generated and is the only prime ideal of $B$ that is not finitely
  generated. We also have    $Q =yA\cap B$, and $Q\cap
  B_n=(y,y\tau_n)B_n$ for each $n \ge 0$.

  To identify the ring $B$ up to isomorphism, we include the following details:
By Equation~\ref{16.5.1}.b,  we have  $\tau_n = c_{n+1}x + x\tau_{n+1}$. Thus we have
\begin{equation*}
f_n ~= ~ xf_{n+1} ~+~yxc_{n+1}.
\tag{\ref{16.5.3de}.1}
\end{equation*}
The  family of equations   (\ref{16.5.3de}.1)  uniquely determines $B$ as a nested union
of the 3-dimensional RLRs $B_n = k[x, y, f_n]_{(x,y,f_n)}$.

We recall the following terminology of \cite[page 325]{ZSII}.

\begin{definition}  \label{16.554}   If a ring  $C$ is  a subring of a ring $D$, a prime ideal
$P$ of $C$  is {\it  lost} in $D$ if $PD \cap C \ne  P$.
\end{definition}

\begin{discussion} \label{types}
Assuming  properties 1 through 7 of Examples~\ref{16.5.1},
if $q$ is a height-one prime of $B$, then $B/q$ is Noetherian if
  and only if $q$ is not contained in $Q$. This is clear since  $q$ is principal and $Q$
  is the unique prime of $B$ that is not finitely generated,  and a
  ring is Noetherian if each prime ideal of the ring is finitely
  generated; see \cite[Theorem~3.4]{M1}.

The height-one primes $q$ of  $B$ may be  separated into several
  types as follows:

\noindent
{\bf Type  I.}
The primes $q \not \subseteq Q$  have the
property that  $B/q$ is a one-dimensional Noetherian local domain.
These primes are
  contracted from $A$, i.e., they are not lost in $A$. To see this,
consider  $q=gB$ where $g
  \not\in Q$. Then $gA$ is contained in a height one prime $P$ of
  $A$. Hence  $g \in (P \cap B) \setminus Q$ so $P \cap B \ne Q$.
  Since $\m_B A = \m_A$, we have $P \cap B \ne \m_B$. Therefore $P
  \cap B$ is a height-one prime containing $q$, so $q = P \cap B$
  and $B_q = A_P$.

  There are infinitely many primes $q$ of type I, because  every
  element of $\m_B \setminus Q$ is contained in a prime $q$ of type
  I. Thus $\m_B \subseteq Q \cup \bigcup\{ q \text{ of Type   I} \}$.
  Since $\m_B$ is not the union of finitely many strictly smaller
  prime ideals, there are infinitely many primes $q$ of Type   I.

\smallskip

\noindent
{\bf Type   I*.} Among the primes of Type   I,  we label the prime ideal $xB$ as Type I*.
The prime ideal $xB$ is special since it is the  unique  height-one prime $q$
of $B$ for which $R^*/qR^*$ is not complete. If $q$ is a height-one prime of $B$
such that $x \notin qR^*$, then $x \notin q$ by Proposition~\ref{16.5.15}.4.
Thus $R^*/qR^*$ is complete with
respect to the powers of the nonzero principal ideal generated by the image of $x$
mod $qR^*$.
Notice that  $R^*/xR^* \cong k[y]_{yk[y]}$.

\smallskip
If $q$ is a height-one prime of $B$ not of Type   I, then $\overline
  B= B/q$ has precisely three prime ideals. These prime ideals form
  a chain: $(\overline 0)\subset \overline Q\subset \overline
  {(x,y)B} = \overline{\m_B}$.

\noindent
{\bf Type   II.} We define the primes of Type   II to be the
  primes  $q \subset Q$  such that  $q$ has height  one and is
  contracted from a prime  $p$ of $A=k(x,y, f)\cap R^*$, i.e., $q$ is
not lost in $A$. For example,
  the prime   $y(y+\tau)B$ is of Type   II, by Lemma~\ref{lost}.  For
$q$ of this type, $B/q$ is dominated
  by the one-dimensional Noetherian local domain $A/p$. Thus $B/q$
  is a non-Noetherian {\it generalized local ring} in the sense of Cohen; that is, 
  $B/q$ has a unique maximal ideal $\overline\n$ that is finitely generated and $\cap_{i=1}^\infty{\overline{\n}}\,^i=(0)$,  \cite{Co}.

  For $q$ of Type   II,   the maximal ideal of $B/q$ is not principal.
  This follows because a  generalized local domain having
  a principal maximal ideal is a DVR \cite[(31.5)]{N2}.

 There are infinitely many height-one primes of Type   II, for
  example, $y(y + x^t\tau)B$ for each $t \in \N$; see Lemma~\ref{prime}.
For $q$ of Type   II,
  the DVR $B_q$ is birationally dominated by $A_p$. Hence $B_q =
  A_p$ and the ideal $\sqrt{qA} = p \cap yA$.

  That each element $y(y + x^t\tau)$ is irreducible, and thus generates a
height-one prime ideal,
  is  done in greater generality in Lemma~\ref{prime}.

\medskip
\noindent
{\bf Type   III.}
  The primes of Type   III are the primes  $q \subset Q$ such that  $q$ has
  height one and is  not contracted from $A$, i.e., $q$ is lost in $A$.
For example, the prime
  $yB$ and the prime $(y+x^ty\tau)B$ for $t \in \N$ are of Type   III; see  Lemma~\ref{lost}.
Since the
  elements $y$ and $y + x^ty\tau$ are in $\m_B$ and are not in
  $\m_B^2$ and since $B$ is a UFD, these elements are necessarily
  prime. There are infinitely many such prime ideals by Lemma~\ref{prime}.
 For $q$ of Type   III, we have $\sqrt{qA} = yA$.

  If $q = yB$ or $q = (y + x^ty\tau)B$, then the image
  $\overline{\m_B}$ of $\m_B$ in $B/q$ is principal. It follows that
  the intersection of the powers of $\overline{\m_B}$ is $Q/q$, and so
  $B/q$ is not a generalized local ring. To see that 
$\bigcap_{i=1}^\infty (\overline{\m_B})^i    \ne(0) $, we argue as follows: If $P$ is a principal
  prime ideal of a ring and $P'$ is a prime ideal properly contained
  in $P$, then $P'$ is contained in the intersection of the powers
  of $P$; see   \cite[page 7, ex. 5]{Kap}.
\end{discussion}
\end{example}

The picture of  $\Spec(B)$ is
shown below.
\vskip 10 pt

\setbox4=\vbox{\hbox{%
\hskip 70pt
     \rlap{\kern  1.050in\lower 0.166in\hbox to .3in{\hss $\m_B:=(x,y)B$\hss}}%
        \rlap{\kern  2.450in\lower .666in\hbox to .3in{\hss $Q:=(y,\{f_i\})B$\hss}}%
   \rlap{\kern  .050in\lower 1.266in\hbox to .3in{\hss $xB\in\boxed{\text{Type   I}}$\hss}}%
   \rlap{\kern  1.350in\lower 1.266in\hbox to .3in{\hss $\boxed{\text{Type   II}}$\hss}}%
   \rlap{\kern  2.850in\lower 1.266in\hbox to .3in{\hss $yB\in\boxed{\text{Type   III}}$\hss}}%
\rlap{\kern  1.350in\lower 1.666in\hbox to .3in{\hss $(0)$\hss}}%
     \rlap{\special{pa 1100 180}    \special{pa 150 1100}    \special{fp}}%
\rlap{\special{pa 1100 180}    \special{pa 2500 550}    \special{fp}}%
    \rlap{\special{pa 2500 750}    \special{pa 1500 1100}    \special{fp}}%
    \rlap{\special{pa  2500 750}    \special{pa 2800 1100}    \special{fp}}%
    \rlap{\special{pa 350 1360}    \special{pa 1350 1570}    \special{fp}}%
    \rlap{\special{pa 1450 1360}    \special{pa 1350 1570}    \special{fp}}%
    \rlap{\special{pa 2850 1360}    \special{pa 1350 1570}    \special{fp}}%
   }} \box4 \vskip 10 pt

\centerline{Diagram~\ref{16.5.3de}.2}

In Remarks~\ref{17.45}  we examine the height-one primes of $B$ from a different perspective.

\begin{remarks} \label{17.45} (1) Assume the notation of Example~\ref{16.5.3de}.
If $w$ is a nonzero prime element of $B$ such that $w \notin Q$, then $wA$ is a prime
ideal in $A$ and is the unique prime ideal of $A$ lying over $wB$.
To see this,  observe that $w \notin yA$ since $w \notin Q = yA \cap B$.
It follows that if  $p \in \Spec A$ is a minimal
prime of $wA$, then  $y \notin p$. Thus  $p \cap B \ne Q$, and so, since we assume the properties
of Examples~\ref{16.5.1} hold,  $p \cap B$ has height one.
Therefore $p \cap B = wB$. Hence the DVR $B_{wB}$ is birationally dominated by $A_p$,
and thus $B_{wB} = A_p$. This implies that  $p$ is the unique prime of $A$ lying over $wB$. We also have
$wB_{wB} = pA_p$. Since $A$ is a UFD and $p$ is the unique minimal prime of $wA$,
it follows that $wA = p$.
In particular, $q$ is not lost in $A$; see Definition~\ref{16.554}.

If $q$ is a
height-one prime of $B$ that is contained in $Q$, then $yA$ is a
minimal prime of $q$, and $q$ is of
Type   II or III depending on whether or not $qA$ has other  minimal prime divisors.

To see this, observe that if  $yA$ is the only prime divisor of $qA$, then $qA$ has
radical $yA$ and $yA \cap B = Q$ implies that $Q$ is the radical of $qA \cap B$.
Thus $q$ is
lost in $A$  and  $q$ is of  Type   III.

On the
other hand, if
there is a minimal prime $p \in \Spec A$ of $qA$ that  is different
from $yA$, then $y$ is not in $p \cap B$  and hence $p \cap B \ne Q$. Since
$Q$ is
the only prime of $B$ of height two, it follows that
$p \cap B$  is a height-one prime and thus $p \cap B  = q$. Thus
$q$ is not lost in $A$ and  $q$ is of Type   II.

We observe that for every  Type   II
prime $q$ there are exactly two minimal primes of $qA$, one of these
is $yA$ and the other  is a height-one prime  $p$ of $A$
such that $p \cap B = q$. For every height-one prime ideal $p$ of $A$ such
that $p \cap B = q$, we have  $B_q$ is a
DVR that is birationally dominated by $A_p$ and hence $B_q = A_p$.   The
uniqueness of $B_q$ implies that there is precisely one such prime ideal $p$ of $A$.

An example of a height-one prime ideal $q$ of Type   II is
$q :=  (y^2+y\tau)B$.  Then  $qA =
(y^2+y\tau)A$ has the two minimal primes $yA$ and $(y+\tau)A$.

(2)  The ring  $B/yB$ is a rank $2$ valuation ring. This can be seen
directly or else one may apply \cite[Prop. 3.5(iv)]{HS}.   For other
prime elements $g$ of $B$ with $g \in Q$, it need not be true that
$B/gB$ is a valuation ring.  If $g$ is a prime element contained in
$\m_B^2$,  then the maximal ideal of $B/gB$ is 2-generated but not
principal and thus $B/gB$ cannot be a valuation ring.   For a
specific example over the field $\Q$,  let   $g = x^2+y^2\tau$.
   \end{remarks}

\section{Background results} \label{3}

We use results  from a general
construction developed in our earlier papers. In particular, we
use the following theorem in establishing Examples~\ref{16.5.1}.

\begin{theorem} \label{build} \text{ \cite[Theorem~1.1]{padua97},  \cite[Theorem~3.2]{noehom},
\cite{powerbook}.}~
Let $R$ be a Noetherian
integral domain with field of fractions $K$. Let $a$ be a nonzero
nonunit of $R$ and let $R^*$ be the $(a)$-adic completion of $R$.
Let $h$ be a positive integer and let
$\tau_1,\dots, \tau_h\in aR[[a]]=aR^*$, abbreviated by ${\underline\tau}$,  be
algebraically
independent over $K$. Let $U_{\underline\tau}$ and $C_{\underline\tau}$ be defined as follows
$$
U_{\underline\tau}~:= ~\bigcup_{r=0}^\infty U_{\underline \tau r}  \quad \text{and}
\quad C_{\underline \tau} ~:=~
\bigcup_{r=0}^\infty C_{\underline \tau r},
$$
where for each integer $r \ge 0$, $U_{\underline \tau r} := R[\tau_{1r},
\ldots, \tau_{hr}]$, ~
$C_{\underline \tau r} := (1 + aU_{\underline \tau r})^{-1}U_{\underline \tau r}$, and each
$\tau_{ir}$ is the $r^{\text{th}}$ endpiece of $\tau_i$ defined as in Equation~\ref{16.5.1}.a.
Then the following statements
are equivalent:
\begin{enumerate}
\item
$A_{\underline\tau}:=K(\underline\tau)\cap R^*$ is Noetherian and
$A_{\underline\tau}=C_{\underline\tau}$.
\item
$A_{\underline\tau}$ is Noetherian and is a localization of a subring of $U_{\underline \tau 0}[1/a]$.
\item
$A_{\underline\tau}$ is Noetherian and is a localization of a subring of $U_{\underline \tau}[1/a]$.

\item $U_{\underline\tau}$ is Noetherian.
\item $C_{\underline\tau}$ is Noetherian.
\item  $R[{\underline\tau}]\to  R^*[1/a]$ is
flat.
\item  $C_{\underline\tau} \to  R^*[1/a]$ is
flat.
\end{enumerate}
\end{theorem}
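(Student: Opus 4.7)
The plan is to establish the seven-way equivalence by first recording a few standing facts and then running a short cycle of implications. Baseline observations: (i) $C_{\underline\tau} \subseteq A_{\underline\tau}$ always, since each $\tau_{ir}$ lies in $K(\underline\tau) \cap R^*$ and elements of $1 + aU_{\underline\tau r}$ are units in $R^*$ (as $a$ lies in the Jacobson radical of $R^*$); (ii) the endpiece recursion $\tau_{i,r} = c_{i,r+1}a + a\tau_{i,r+1}$ gives $\tau_{i,r+1} \in R[\tau_{i,r}][1/a]$, so $U_{\underline\tau}[1/a] = U_{\underline\tau 0}[1/a] = R[\underline\tau][1/a]$ and $C_{\underline\tau}[1/a]$ is a localization of $R[\underline\tau][1/a]$; (iii) each $U_{\underline\tau r}$ is a polynomial ring in $h$ variables over the Noetherian ring $R$, although $U_{\underline\tau}$ itself need not be Noetherian.

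With these facts in hand, the formal implications are quick. The equivalence $(4) \Leftrightarrow (5)$ is a routine localization argument using $C_{\underline\tau} = (1 + aU_{\underline\tau})^{-1}U_{\underline\tau}$ together with $a \in \Jrad(C_{\underline\tau})$. The chain $(1) \Rightarrow (2) \Rightarrow (3)$ is immediate once one notes that $C_{\underline\tau}$ is a directed union of localizations of the rings $U_{\underline\tau r}$, each of which lies in $R[\underline\tau][1/a]$. For $(3) \Rightarrow (6)$, Noetherianness of $A_{\underline\tau}$ forces its $a$-adic completion to be $R^*$ by the standard intersection argument (cf.\ \cite{V}), whence $A_{\underline\tau} \to R^*$ is flat; precomposing with $R[\underline\tau] \hookrightarrow A_{\underline\tau}$ and inverting $a$ yields flatness of $R[\underline\tau] \to R^*[1/a]$. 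The equivalence $(6) \Leftrightarrow (7)$ is transitivity of flatness, combined with the fact that $C_{\underline\tau} \to R[\underline\tau][1/a]$ is a localization (and localizations are flat).

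The main obstacle, and the heart of the theorem, is the implication $(6) \Rightarrow (1)$ --- the limit-intersection statement drawn from \cite{noehom} and \cite{powerbook}. The $a$-adic completion of $C_{\underline\tau}$ is $R^*$ by construction, since each $\tau_i$ is an $a$-adic limit of the endpieces $\tau_{ir}$. Given any $\alpha \in A_{\underline\tau}$, one produces a coherent sequence of approximations $\alpha \equiv \gamma_n \pmod{a^n}$ with $\gamma_n \in C_{\underline\tau}$; a priori these $\gamma_n$ converge only in $R^*$. The flatness hypothesis is used to control the module-theoretic syzygies between successive $\gamma_n$, forcing the convergence to occur already inside $C_{\underline\tau}$ and thereby placing $\alpha$ in $C_{\underline\tau}$. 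Once $C_{\underline\tau} = A_{\underline\tau}$ is established, faithful flatness of $C_{\underline\tau} \to R^*$ implies that every ideal of $C_{\underline\tau}$ is the contraction of a finitely generated ideal of $R^*$, yielding Noetherianness. This flatness-to-equality step --- turning $a$-adic approximations into genuine equalities via a syzygy argument --- is the technical crux, and the remaining implications of the theorem flow from it.
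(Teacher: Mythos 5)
The paper does not contain a proof of Theorem~\ref{build}: it is stated as a citation to \cite{padua97}, \cite{noehom}, and \cite{powerbook}, so there is no in-paper argument to compare your sketch against. Judged on its own merits, the decomposition you propose --- handling $(4)\Leftrightarrow(5)$ and $(6)\Leftrightarrow(7)$ separately and running a cycle $(1)\Rightarrow(2)\Rightarrow(3)\Rightarrow(6)\Rightarrow(1)$ --- is reasonable, and your easy steps (the inclusion $C_{\underline\tau}\subseteq A_{\underline\tau}$, the identity $U_{\underline\tau}[1/a]=R[\underline\tau][1/a]$, $(1)\Rightarrow(2)\Rightarrow(3)$, $(6)\Leftrightarrow(7)$) are sound. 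However, the two steps that carry the real weight have gaps.

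Your argument for $(3)\Rightarrow(6)$ fails as stated. You want flatness of $R[\underline\tau]\to R^*[1/a]$ from flatness of $A_{\underline\tau}\to R^*$ by ``precomposing with $R[\underline\tau]\hookrightarrow A_{\underline\tau}$ and inverting $a$,'' but the inclusion $R[\underline\tau]\hookrightarrow A_{\underline\tau}$ is not flat in general, so the composite need not be flat, and inverting $a$ afterwards does not repair this. The part of hypothesis~(3) you never use --- that $A_{\underline\tau}$ is a \emph{localization} of a subring $D$ of $U_{\underline\tau}[1/a]=R[\underline\tau][1/a]$ --- is exactly what makes the step work: since $R[\underline\tau]\subseteq A_{\underline\tau}$ and $D\subseteq R[\underline\tau][1/a]\subseteq A_{\underline\tau}[1/a]$, one checks that $A_{\underline\tau}[1/a]$ is a localization of $R[\underline\tau][1/a]$, so $R[\underline\tau][1/a]\to A_{\underline\tau}[1/a]$ is flat, and composing with the flat map $A_{\underline\tau}[1/a]\to R^*[1/a]$ gives~(6).

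Your treatment of $(6)\Rightarrow(1)$, which you rightly call the crux, is a description of what a proof would accomplish rather than a proof. The sentence about ``controlling the module-theoretic syzygies between successive $\gamma_n$'' does not explain why an $a$-adically Cauchy sequence in $C_{\underline\tau}$ must converge in $C_{\underline\tau}$ rather than only in $R^*$. The actual mechanism, recorded in this paper as Lemma~\ref{flat} (from \cite{noehom} and \cite{powerbook}), is concrete: using $aC_{\underline\tau}=aR^*\cap C_{\underline\tau}$ and $C_{\underline\tau}/aC_{\underline\tau}=R^*/aR^*$ (Proposition~\ref{16.5.15}), flatness of $C_{\underline\tau}\to R^*[1/a]$ is equivalent to flatness of $C_{\underline\tau}\to R^*$, and then, because $R^*$ and $C_{\underline\tau}[1/a]=R[\underline\tau][1/a]$ are both Noetherian, $C_{\underline\tau}$ itself is Noetherian. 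Only with Noetherianity of $C_{\underline\tau}$ in hand is the equality $C_{\underline\tau}=A_{\underline\tau}$ extracted, by a separate argument using faithful flatness of $C_{\underline\tau}\to R^*$ and the Krull intersection theorem. Without identifying Lemma~\ref{flat} or an equivalent descent statement, this implication is a genuine gap in your sketch.
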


Propositions~\ref{16.5.15} and ~\ref{Bufd} are used for Examples~\ref{16.5.1}.

\begin{proposition} \label{16.5.15} With the notation of Theorem~\ref{build},
let $U = U_{\underline \tau}, ~ U_n = U_{\underline \tau n}$,  $C = C_{\underline \tau}$
and $A = A_{\underline \tau}$.
Then for all $t \in \N$ we have
\begin{equation*}
a^tC ~ = ~ a^tR^* \cap C \quad \text{ and }
\quad \frac{R}{a^tR} ~ =  ~ \frac{U}{a^tU} ~=  ~ \frac{C}{a^tC} ~=
~ \frac{A}{a^tA} ~=
~ \frac{R^*}{a^tR^*}. \tag{\ref{16.5.15}.0}
\end{equation*}
Moreover,
\begin{enumerate}
\item  The $(a)$-adic completions of $U$, $C$ and $A$
are all equal to  $R^*$, and $a$ is in the Jacobson radical of $C$.
\item The ring $U[1/a] = R[\underline \tau][1/a]$, and so $C[1/a]$
 is a localization of $R[\underline \tau]$.
\item If $q$ is  a prime ideal of $R$, then $qU$ is a prime ideal of $U$,
and either $qC = C$ or $qC$ is a prime ideal of $C$.
\item
Let $I$ be an ideal of  $C$ and let $t \in \N$. If   $a^t \in IR^*$,
then  $a^t \in I$.
\item \label{znpna} Let $P\in\spec C$  with $a\notin P$. Then
$a$ is a nonzerodivisor on $R^*/PR^*$. Thus $a \notin Q$ 
for each associated prime $Q$ of the ideal $PR^*$. Since $a$ is in
the Jacobson radical of $R^*$, it follows that $PR^*$ is contained 
in a nonmaximal prime ideal of $R^*$. 
\item  \label{Bloc} If $R$ is local, then $R^*$ and  $C                                                                                                                                                                                                                                                                                                                                                                                                                                                                                                                                                                                                                                                                                                                                                                                                                                                                                                                                                                                                 $ are both local,
 and we let $\m_R, ~\m_{R^*}$ and $\m_{C}$ denote
the maximal ideals of $R, ~R^*$ and $C$, respectively. In this case
\begin{itemize}
\item  $\m_C = \m_RC$  and each prime ideal $P$ of $C$ such that $\hgt(\m_C/P) = 1$ is contracted
from $R^*$.
\item If an ideal $I$ of $C$ is such that  $IR^*$ is primary for $\m_{R^*}$,
then $I$ is primary for $\m_C$.
\end{itemize}
\end{enumerate}
\end{proposition}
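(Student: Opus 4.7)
The plan is to first establish the chain of isomorphisms $R/a^tR \cong U/a^tU \cong C/a^tC \cong A/a^tA \cong R^*/a^tR^*$, from which nearly everything else descends. Iterating the endpiece relation~(\ref{16.5.1}.b) gives $\tau_{ir} = \sum_{k=1}^{t} c_{i,r+k}a^k + a^t\tau_{i,r+t}$, so $\tau_{ir} \in R + a^t U$ and the natural map $R \to U/a^tU$ is surjective. Injectivity follows since $R \cap a^tU \subseteq R \cap a^tR^* = a^tR$ (the latter being the standard iso from $R^*$ being the $a$-adic completion). The arguments for $C$ and $A$ are analogous; for $A$, if $\alpha \in A$ satisfies $\alpha \in a^t R^*$, then $\alpha/a^t \in K(\underline\tau) \cap R^* = A$. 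The equation $a^tC = a^tR^* \cap C$ is then immediate from the iso $C/a^tC \cong R^*/a^tR^*$.

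From the chain, taking $\varprojlim_t$ gives the completion statement of part~(1), and the Jacobson-radical assertion is immediate from $C = \bigcup (1+aU_r)^{-1}U_r$, so that $1+ac$ is invertible for every $c \in C$. Part~(2) follows because the same endpiece expansion shows $\tau_i$ and $\tau_{ir}$ generate the same $R$-algebra after inverting $a$. For part~(3), algebraic independence of the $\tau_{ir}$ over $K$ makes each $U_r$ a genuine polynomial ring over $R$, so $U_r/qU_r \cong (R/q)[\tau_{1r},\ldots,\tau_{hr}]$ is a domain; the directed union $U/qU$ then remains a domain, and for $C$ the localization either yields a domain or the trivial ring, giving the stated dichotomy.

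The main technical tool is part~(4), proved by an approximation-plus-Jacobson-radical argument. If $a^t = \sum b_i r_i^* \in IR^*$, use $C/a^{t+1}C \cong R^*/a^{t+1}R^*$ to lift each $r_i^*$ to $c_i \in C$ modulo $a^{t+1}R^*$. Then $a^t - \sum b_i c_i \in a^{t+1}R^* \cap C = a^{t+1}C$, yielding $a^t(1 - ac) = \sum b_i c_i \in I$ for some $c \in C$; part~(1) makes $1-ac$ a unit, so $a^t \in I$. The same scheme proves~(5): from $az \in PR^*$, writing $az = \sum p_j r_j^*$ and approximating $r_j^*$ by $c_j \in C$ modulo $aR^*$, one extracts $\sum p_j c_j \in aR^* \cap C = aC$, say $\sum p_j c_j = ac'$; primality of $P$ with $a \notin P$ forces $c' \in P$, and since $a$ is a non-zerodivisor in $R^*$ (being one in $R$ with $R \to R^*$ flat), cancelling gives $z \in PR^*$. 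The remaining sentences of~(5) are formal: associated primes of $PR^*$ consist of zerodivisors of $R^*/PR^*$ and therefore avoid $a$, which is in the Jacobson radical of $R^*$, so they are non-maximal.

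For part~(6), $R^*$ is local by completion of a Noetherian local ring; $C/aC \cong R/aR$ is local, and $a$ in the Jacobson radical of $C$ forces $C$ to be local with $\m_C = \m_RC$. For the height-one-over-$P$ claim, split on whether $a \in P$: if $a \in P$, the iso $C/aC \cong R^*/aR^*$ transports $P$ to a prime of $R^*$ contracting back to $P$; if $a \notin P$, an associated prime $Q$ of $PR^*$ avoids $a$ by~(5), so $Q \cap C \neq \m_C$, and the height-one hypothesis forces $Q \cap C = P$. For the primary-transfer, $\m_{R^*}^N \subseteq IR^*$ gives $a^N \in I$ by~(4); running the approximation from (4)/(5) on any $b \in \m_R^N \subseteq R \cap IR^*$ yields $b \in I + a^TC$ for every $T$, and with $T = N$ combined with $a^NC \subseteq I$ this forces $b \in I$. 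Hence $\m_C^N = \m_R^NC \subseteq I$, so $I$ is $\m_C$-primary. The chief obstacle is stating the (4)-style approximation flexibly enough to be rerun at the various powers of $a$ needed for~(5) and the primary-transfer step of~(6).
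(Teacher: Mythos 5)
Your proposal is correct and, on one key point, genuinely cleaner than the paper's proof. For the foundational equalities in (\ref{16.5.15}.0) the paper cites earlier work, whereas you derive them directly from the iterated endpiece relation: $\tau_{ir}\in R+a^tU$ plus $R\cap a^tR^*=a^tR$, with $R+a^tU$ being a subring so that $U=R+a^tU$; your one-line argument for $a^tA=a^tR^*\cap A$ (via $\alpha/a^t\in K(\underline\tau)\cap R^*$) is the right observation. Items (1)--(4) then proceed exactly as in the paper; your bookkeeping in (4) (writing $r_i^*=c_i+a^{t+1}\lambda_i$ and extracting $a^t(1-ac)\in I$) is the same unit-trick the paper uses.

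The real divergence is in item (5). The paper proves that $a$ is a nonzerodivisor on $R^*/PR^*$ by an iterated approximation over all degrees $m$ together with a Krull-intersection step ($\widehat f\in\bigcap_m(PR^*+a^mR^*)=PR^*$); it also first manufactures finitely many generators $g_1,\dots,g_t$ of $P$ modulo $aP$. Your argument does a single approximation at degree one (writing the finitely many $r_j^*$ in a presentation $az=\sum p_jr_j^*$ of $az$ over $PR^*$ as $c_j+a\mu_j$ with $c_j\in C$), lands $\sum p_jc_j$ in $aR^*\cap C=aC$, pushes the resulting $c'$ into $P$ by primality, and then cancels $a$ outright because $a$ is a nonzerodivisor in $R^*$ (flatness of $R\to R^*$). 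This is shorter and arguably more transparent, and it avoids both the finite-generation-of-$P$-mod-$aP$ step and the intersection argument. One remark for completeness: the finitely many $p_j\in P$ with $PR^*=(p_1,\dots,p_s)R^*$ exist because $R^*$ is Noetherian, which you use implicitly and should perhaps say.

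In item (6), for the primary-transfer you show $\m_C^N\subseteq I$ by running the degree-$N$ approximation on each $b\in\m_R^N\subseteq R\cap IR^*$, then invoke the general fact that an ideal whose radical is maximal is primary (true with no Noetherian hypothesis). The paper instead passes to $C/a^tC=R^*/a^tR^*$ and observes $I/a^tC$ is identified with $IR^*/a^tR^*$, which is a one-step argument. Both are correct; the paper's is a bit more economical. Your treatment of the height-one contraction claim (splitting on $a\in P$ vs.\ $a\notin P$, using item (5) to locate an associated prime avoiding $a$) is a minor variant of the paper's (which uses item (4) and prime avoidance of $\{a^n\}$), and both are fine.
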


\begin{proof} The  equalities  in Equation~\ref{16.5.15}.0 follow
from \cite[Prop. 2.2.1]{padua97}, \cite[Prop. 2.4.3]{noehom},
\cite[Corollary~6.19]{powerbook}, and these imply item 1 about $(a)$-adic completions. For the second statement,
since $C_n = (1+aU_N)^{-1}U_n$,
it follows that $1 + ac$ is a unit of $C_n$ for each $c \in C_n$. Therefore
$a$ is in the Jacobson radical  of $C_n$ for each $n$ and thus $a$ is
in the Jacobson radical of $C$.

For item~2, the
relation given in Equation~\ref{16.5.1}.b for the case of one variable $\tau$ holds
also in the case of several variables and implies that $U[1/a] = R[\underline \tau][1/a]$.
Since $C$ is a localization of $U$, we have
$C[1/a]$ is a
localization of $R[\underline \tau]$ by Remark~\ref{16.5.11}.

For item~3, since each $U_n$ is a polynomial ring over $R$, $qU_n$ is a prime ideal of $U_n$
and thus $qU = \bigcup_{n=0}^\infty qU_n$ is a prime ideal of $U$. Since $C$ is a localization
of $U$, $qC$ is either $C$ or a prime ideal of $C$.

To see item~4, observe that
there  exist elements $b_1, \ldots, b_s \in I$ such
that $IR^* = (b_1, \ldots,b_s)R^*$. If $a^t \in IR^*$, there
exist $\alpha_i \in R^*$ such that
$$
a^t ~= ~ \alpha_1b_1 + \cdots + \alpha_sb_s.
$$
We have $\alpha_i = a_i + a^{t+1}\lambda_i$ for each $i$, where $a_i \in C$ and
$\lambda_i \in R^*$. Thus
$$
a^t[1 - a(b_1\lambda_1 + \cdots + b_s\lambda_s)] ~= ~ a_1b_1 + \cdots + a_sb_s
\in C\cap a^tR^*=a^tC.
$$
Therefore  $\gamma := 1 - a(b_1\lambda_1 + \cdots +
b_s\lambda_s)
\in C$.  Thus $a(b_1\lambda_1 + \cdots +
b_s\lambda_s)\in C\cap aR^*=aC$, and so  $b_1\lambda_1 + \cdots +
b_s\lambda_s\in C$.   By item~1, the element  $a$ is
in the Jacobson radical of $C$. Hence
$\gamma$ is invertible in $C$. Since $\gamma a^t\in (b_1,\cdots, b_s)C$,
it follows that $a^t \in I$.

For item~\ref{znpna},  assume that $P\in\spec C$ and $a\notin P$. We have that
$$P\cap aC=aP\quad\text{ and so }\quad \frac P{aP}=\frac{P}{P\cap aC}
\cong \frac{P+aC}{aC}$$
By  Equation~\ref{16.5.15}.0,  $C/aC$ is Noetherian.  Hence  the  $C$-module 
$C/aC$ is finitely generated. Let $g_1,\dots, g_t\in P$ be such that 
$P=(g_1,\dots, g_t)C+aP$. Then also $PR^*=(g_1,\dots, g_t)R^*+aR^*=(g_1,\dots, g_t)R^*$; 
the first equality is by Equation~\ref{16.5.15}.0, and the last equality is by Nakayama's Lemma. 

Let $\widehat f\in R^*$ be such that $a\widehat f\in PR^*$.  we show that 
$\widehat f\in PR^*$.

Since $\widehat f\in R^*$, we 
have  $\widehat f:=\sum_{i=0}^\infty c_ia^i$, 
where each $c_i\in R$. For each $m>1$, 
let $f_m:=\sum_{i=0}^{m}c_ia^i$, the first $m+1$ terms of 
this expansion of $\widehat f$. 
Then  $f_m\in R\subseteq C$ and there exists an element $\widehat h_1\in R^*$ so that. 
$$\widehat f=f_m+a^{m+1}\widehat {h_1}.$$
Since $a\widehat f\in PR^*$, we have  
 $$a\widehat f=\widehat {a_1}g_1+\dots + \widehat {a_t}g_t, $$
 for some $\widehat{a_i}\in R^*$. The $\widehat {a_i}$ have
 power series expansions in $a$ over $R$, and thus there exist elements $a_{im} \in R$ 
such that $\widehat{a_i} - a_{im} \in a^{m+1}R^*$.  Thus
  $$a\widehat f=a_{1m}g_1+\dots + a_{tm}g_t + a^{m+1}\widehat{h_2}, $$
  where  $\widehat h_2\in R^*$, and 
  $$af_m=a_{1m}g_1+\dots + a_{tm}g_t + a^{m+1}\widehat{h_3}, $$
 where  $\widehat{h_3}=\widehat{h_2}-a\widehat{h_1}\in R^*$. 
Since the $g_i$ are in $C$, we have 
 $a^{m+1}\widehat{h_3}\in a^{m+1}R^*\cap C=a^{m+1}C$, the last equality  by
Equation~\ref{16.5.15}.0. Therefore $\widehat{h_3}\in C$.
Rearranging the last set-off equation above, we obtain 
   $$a(f_m- a^{m}\widehat{h_3})=a_{1m}g_1+\dots +  a_{tm}g_t \in P. $$
Since $a\notin P$, we have  
$f_m  - a^{m}\widehat{h_3}\in P$.
It follows that $\widehat f\in P+a^{m}R^*\subseteq PR^*+a^{m}R^*$, 
for each $m>1$.
Hence we have that $\widehat f\in PR^*$, as desired.

For item~\ref{Bloc}, if $R$ is local, then  $C$ is local since $C/aC=R/aR$ and $a$ is in the Jacobson radical of $C$.
Hence also $\m_C = \m_RC$.
If $a \not\in P$,
then item 4 implies that no power of $a$ is in $PR^*$. Hence $PR^*$ is contained in a prime
ideal $Q$ of $R^*$ that does not meet the multiplicatively
closed set  $\{a^n\}_{n=1}^\infty$. Hence $P \subseteq Q\ \cap C \subsetneq \m_C$.
Since $\hgt(\m_C/P) = 1$, we have $P = Q \cap C$, so $P$ is contracted from $R^*$.
If $a \in P$, then
(\ref{16.5.15}.0) implies that $PR^*$ is a prime ideal of $R^*$ and $P = PR^* \cap C$.

For the second part of item~5, if $IR^*$ is $\m$-primary then $a^t\in IR^*$. Thus $a^t\in I$ by item 4.
By Equation~\ref{16.5.15}.0,  $C/a^tC=R^*/a^tR^*$ and so $I/a^tC$ is primary for the maximal ideal of $C/a^tC$. Therefore   $I$ is primary for the maximal ideal of $C$.\end{proof}

The definition of $B$ as a directed union as given in Examples~\ref{16.5.1} and later in
this article is not the same as the definition of $C$
as a directed union given in Theorem~\ref{build} and Propositions~\ref{16.5.15} and \ref{Bufd}.
However the ring $B$ {\it is} the same  as the ring $C$ for $R$ as in Examples~\ref{16.5.1}.
We show this  more generally in Remark~\ref{locBsame}.1 for $R$   a Noetherian local domain.

\begin{remarks} \label{locBsame}
(1) Assume the setting of Theorem~\ref{build} with the additional assumption that
 $R$ is a Noetherian local domain with maximal ideal $\m$.  We observe in this case that
$C_{\underline{\tau} }$ as defined in Theorem~\ref{build} is the directed union of the localized polynomial rings   $B_r := (U_{\underline{\tau} r})_{P_r}$,
where $P_r  := {(\m, \tau_{1r}, \ldots, \tau_{hr})}U_{\underline{\tau} r}$.

\begin{proof} It is clear that  $B_r \subseteq B_{r+1}$,  and  $P_{r}  \cap
(1 + aU_{\underline{\tau} r}) = \emptyset$  implies
that $C_{\underline{\tau} r} \subseteq B_{ r}$.
We show that $B_{r} \subseteq C_{\underline{\tau} r + 1}$: Let $\frac{u}{d} \in B_r$,
where $u \in U_{\underline{\tau} r}$ and $d \in U_{\underline{\tau} r} \setminus P_{ r}$.
Then $d = d_0 + \sum_{i=1}^h \tau_{ir}b_i$,
where $d_0 \in R$ and each $b_i \in U_{\underline{\tau} r}$.
Notice that $d_0 \notin \m$ since $d \notin P_{r}$,
and so $d_0^{-1} \in R$. Thus $dd_0^{-1} = 1 +
\sum_{i=1}^h \tau_{ir}b_id_0^{-1} \in (1 + aU_{\underline{\tau} r+1})$
since each $\tau_{ir} \in aU_{\underline{\tau} r+1}$ by (\ref{16.5.1}.b).
Hence $\frac{u}{d} = \frac{ud_0}{dd_0} \in C_{\underline{\tau} r+1}$,
and so $C_{\underline{\tau}} = \bigcup_{r=1}^\infty C_{\underline{\tau} r}  =
\bigcup_{r=1}^\infty B_r$.
\end{proof}

(2) With the notation of Examples~\ref{16.5.1}, where
$R$ is the  localized polynomial ring $k[x,y]_{(x,y)}$ over a field $k$,
 $R^*=k[y]_{(y)}[[x]]$ is the $(x)$-adic completion of $R$ and $\tau\in xR^*$
is transcendental over $K$,  the proof in
 item (1) shows that
$C_\tau=\bigcup B_r,$  where $B_r=(U_r)_{P_r}$,
$U_r=k[x,y,\tau_r]$ and $P_r  = (x, y,\tau_{r})U_r$.  A
similar remark applies to $C_f$ with appropriate modifications to
$B_r$, $U_r$ and $P_r$.

\smallskip

(3) Thus the results of  Theorem~\ref{build}  and Propositions~\ref{16.5.15} and \ref{Bufd}
  hold for the ring $B$ of Examples~\ref{16.5.1} and also the examples later in this article.

\end{remarks}

\begin{proposition} \label{Bufd}
Assume the notation of Theorem~\ref{build} and set $C:=C_{\underline \tau}$.
\begin{enumerate}
\item
If $R$ is a UFD and $a$ is a prime element of $R$, then $aC$ is a prime ideal,
$C[1/a]$ is a Noetherian UFD and $C$ is a UFD.
\item
If in addition $R$ is regular, then
$C[1/a]$ is a regular Noetherian UFD.
\end{enumerate}
 \end{proposition}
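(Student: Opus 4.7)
The plan is to leverage Proposition~\ref{16.5.15} together with a Nagata-style criterion for unique factorization based on the prime element $a$.

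For part~(1), I would first verify that $aC$ is a prime ideal. Since $a$ is a prime element of $R$, the ideal $aR$ is prime in $R$, so by Proposition~\ref{16.5.15}.3 either $aC=C$ or $aC\in\Spec C$. Proposition~\ref{16.5.15}.1 says $a$ lies in the Jacobson radical of $C$, ruling out $aC=C$. Hence $a$ is a prime element of $C$. Next, to show $C[1/a]$ is a Noetherian UFD, I would invoke Proposition~\ref{16.5.15}.2 to write $C[1/a]$ as a localization of $R[\tau_1,\dots,\tau_h]$; the latter is a polynomial ring in finitely many variables over the Noetherian UFD $R$, hence a Noetherian UFD by the Hilbert basis theorem and Gauss's lemma, and a localization inherits both properties.

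The main step is that $C$ itself is a UFD. I would first show $\bigcap_n a^nC=(0)$, which follows from $C\subseteq R^*$ together with Krull's intersection theorem applied to the Noetherian $a$-adic completion $R^*$, in which $a$ lies in the Jacobson radical. This makes the $a$-adic valuation $\nu_a$ on $C\setminus\{0\}$ well defined, and it is multiplicative since $aC$ is prime. Given a nonzero $c\in C$, write $c=a^{n}c'$ with $c'\notin aC$, and factor $c'/1$ in the Noetherian UFD $C[1/a]$ as $\tilde u\,\tilde p_1\cdots\tilde p_k$. After scaling each $\tilde p_i$ by an appropriate power of $a$, it is represented by an element $q_i\in C\setminus aC$; a short argument using the primality of $aC$ shows $q_iC=\tilde p_iC[1/a]\cap C$, so each $q_i$ is a prime element of $C$. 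Comparing $\nu_a$-values on both sides of the equation in $C[1/a]$ forces the $a$-power exponents to cancel, producing an honest equation $c'=u\,q_1\cdots q_k$ in $C$ with $u$ a unit. Thus $c=u\,a^n\,q_1\cdots q_k$ is a factorization into primes of $C$, and uniqueness descends from uniqueness in $C[1/a]$ together with the well-definedness of $n=\nu_a(c)$.

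For part~(2), once $R$ is regular, the polynomial ring $R[\tau_1,\dots,\tau_h]$ is a regular Noetherian ring and localization preserves regularity; combined with part~(1), this yields that $C[1/a]$ is a regular Noetherian UFD. The main obstacle is the bookkeeping in part~(1): verifying that each prime $\tilde p_i$ of $C[1/a]$ truly contracts to a principal prime $q_iC\subset C$ and that the $\nu_a$-balance forces the scaling $a$-powers to cancel, so that the factorization obtained in $C[1/a]$ lifts to a genuine prime factorization in $C$. Everything else is a formal consequence of Proposition~\ref{16.5.15} and standard facts about polynomial rings.
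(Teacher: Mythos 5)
Your argument is correct, and the first two steps (that $aC$ is prime via Proposition~\ref{16.5.15}.3 and the Jacobson radical observation, and that $C[1/a]$ is a (regular) Noetherian UFD as a localization of the polynomial ring $R[\underline\tau]$ via Proposition~\ref{16.5.15}.2 and Remark~\ref{16.5.11}) match the paper exactly. Where you diverge is in proving that $C$ itself is a UFD. The paper's route is to observe, as you also do, that $\bigcap_n a^nC = (0)$, and to deduce from this that the quasi-local domain $C_{aC}$ has a principal maximal ideal with zero intersection of powers and is therefore a DVR (Nagata~(31.5)). It then uses the elementary intersection fact $C = C[1/a]\cap C_{aC}$ to conclude that $C$ is a Krull domain, and finishes by citing the Krull-domain form of Nagata's theorem from Samuel (a Krull domain $D$ with a multiplicative set $S$ generated by prime elements such that $S^{-1}D$ is a UFD is itself a UFD). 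You instead carry out a hands-on version of that same Nagata theorem: you factor in $C[1/a]$, normalize each prime factor $\tilde p_i$ by a power of $a$ to land in $C\setminus aC$, verify that the resulting $q_i$ are prime in $C$ using the primality of $aC$ and the identity $q_iC = q_iC[1/a]\cap C$, observe that every unit of $C[1/a]$ is of the form $a^m u$ with $u$ a unit of $C$, and then balance $a$-valuations to get a genuine factorization $c = u\,a^n\,q_1\cdots q_k$ in $C$, with uniqueness coming from uniqueness in $C[1/a]$ together with well-definedness of $\nu_a$. Both arguments ultimately rest on the same three inputs ($aC$ prime, $\bigcap a^nC=0$, $C[1/a]$ a UFD); the paper's is shorter because it delegates the lifting of factorizations to general Krull-domain theory, while yours is more self-contained and makes the mechanism of the lift explicit. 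Either is acceptable.
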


\begin{proof} By Proposition~\ref{16.5.15}.3,  $aC$ is a prime ideal.
Since $R$ is a Noetherian UFD and $S= R[{\tau_1},\cdots, {\tau_h}]$ is a
polynomial ring extension of
$R$, it follows that  $S$ is a Noetherian UFD.  By Remark~\ref{16.5.11},
the ring $C[1/ a]$ is a localization of $S$ and thus a Noetherian UFD;
moreover $C[1/ a]$ is regular
if $R$ is.  The $(a)$-adic completion
of $C$ is $R^*$ by Proposition~\ref{16.5.15}.1. Since $R^*$ is Noetherian
and $a$ is in the Jacobson
radical of $R^*$; see  \cite[Theorem 8.2(i)]{M}, it
follows that $\bigcap_{n=1}^\infty a^nR^* = (0)$. Thus $\bigcap_{n=1}^\infty a^nC
= (0)$ by Equation~\ref{16.5.15}.0. It follows that $C_{aC}$ is Noetherian \cite[(31.5)]{N2}, and hence  $C_{aC}$ is a DVR.
We use the following fact:

\begin{fact} \label{Intdomint} If $D$ is an integral domain and $c$ is a nonzero element
of $D$  such that $cD$ is a prime ideal, then $D=D[1/c]\cap D_{cD}$.
\end{fact}
{\it Proof of fact.}   Let
$\beta \in D[1/c] \cap D_{cD}$. Then $\beta=
\frac{b}{c^n}=\frac{b_1}s$ for some $b, b_1 \in D$, $s\in D\setminus
cD$ and integer $n \ge 0$. If $n> 0$, we have $sb=c^nb_1\implies
b\in cD$. Thus we may reduce to the case where  $n = 0$;  it follows
that $D = D[1/c] \cap D_{cD}$. This proves the fact.

\medskip

We return to the proof of Proposition~\ref{Bufd}. By the fact,   $C = C[1/a] \cap C_{aC}$,
and therefore $C$ is a Krull domain. Since  $C[1/a]$ is a UFD and
$C$ is a Krull domain, it follows
that  $C$ is a UFD \cite[page~21]{Sam}.
 \end{proof}

In order to  examine more closely the prime ideal structure of the ring $B$
of Examples~\ref{16.5.1}, we establish in Proposition~\ref{16.5.17} some
properties of its overring $A$ and of the map $\Spec A \to \Spec B$.

\begin{proposition} \label{16.5.17} With the notation of Examples~\ref{16.5.1}, we have
\begin{enumerate}
\item $A = B_{\tau}$ and $A[1/x]$ is a localization of $R[\tau]$.
\item For $P \in \Spec A$ with $x \notin P$,  the following are equivalent:
$$
{\bf(a)}~ A_P = B_{P \cap B} \hskip 40pt {\bf (b)} ~\tau \in B_{P \cap B} \hskip 40pt
 {\bf(c)} ~p \notin P.
$$
\end{enumerate}
\end{proposition}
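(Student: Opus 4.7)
The plan for Part~(1) is to invoke Theorem~\ref{build} with $h=1$ and $\tau_1 = \tau$. By Valabrega's result \cite{V}, $A = K(\tau)\cap R^*$ is a two-dimensional regular local domain, hence Noetherian. To upgrade Noetherianness to the equality $A = C_\tau$, I would verify condition~(6) of Theorem~\ref{build}: that $R[\tau]\to R^*[1/x]$ is flat. Since $R\to R^*$ is flat (Noetherian $x$-adic completion), the base change $R[T]\to R^*[T]$ is flat, and $R^*\cong R^*[T]/(T-\tau)$ is the quotient of the flat $R[T]$-module $R^*[T]$ by the regular element $T-\tau$, so $R^*$ (and hence its localization $R^*[1/x]$) is flat over $R[T]/(T-\tau) = R[\tau]$. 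Remark~\ref{locBsame} then identifies $C_\tau$ with $B_\tau = \bigcup B_{\tau n}$, so $A = B_\tau$. The second assertion of Part~(1) follows immediately from Proposition~\ref{16.5.15}(2) applied to $C = C_\tau = B_\tau$.

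For Part~(2), set $q := P\cap B$. The implication (a)$\Rightarrow$(b) is immediate, since $\tau\in A\subseteq A_P = B_q$, and (c)$\Rightarrow$(b) follows because $p\notin q$ makes $p$ a unit in $B_q$, yielding $\tau = f/p\in B_q$. For (b)$\Rightarrow$(c), I would write $\tau = b/s$ with $s\in B\setminus q$, obtaining $sf = pb$ in $B$. Suppose for contradiction $p\in q$; then some $p_i\in q$. Each quotient $B_n/p_iB_n$ is a localization of the domain $(R/p_iR)[f_n]$, so $p_i$ is prime in each $B_n$ and hence in $B = \bigcup B_n$. Iterating the endpiece relation $f_{n-1} = xf_n + xyc_n$ yields $f = x^n f_n + P_n(x,y)\in B_n$ for some $P_n\in k[x,y]$; since $x$ and $p_i$ are non-associate primes of the UFD $R$, the image $\bar x^n$ is nonzero in $R/p_iR$, so $\bar f$ is a nonzero polynomial in $f_n$ and $p_i\nmid f$ in $B_n$ and hence in $B$. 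Because $B$ is a UFD (Proposition~\ref{Bufd}), $p_i\mid sf$ forces $p_i\mid s$, contradicting $s\notin q$.

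The main obstacle is (b)$\Rightarrow$(a). Assuming $\tau\in B_q$ and using Part~(1), I would express $A[1/x]$ via the explicit formula $C_\tau[1/x] = (1+xU_\tau)^{-1}R[\tau][1/x]$, where $U_\tau = \bigcup_n R[\tau_n]$, so it suffices to prove that $x$ and every element of $1+xU_\tau$ is a unit in $B_q$. The element $x$ is a unit in $B_q$ since $x\notin q$, and the endpiece relation $\tau_n = (\tau - \sum_{i=1}^n c_i x^i)/x^n$ then gives $\tau_n\in B_q$ for all $n$, so $U_\tau\subseteq B_q$. For any $u\in U_\tau$, the element $1+xu$ lies in $1+\m_A$ and is therefore a unit in $A$, so $1+xu\notin P$ and its image in $k(P) := A_P/PA_P$ is nonzero. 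The crucial step is that the equality $q = P\cap B$ produces an injection $B/q\hookrightarrow A/P$, which extends to a field embedding $k(q) := B_q/qB_q\hookrightarrow k(P)$; hence the image of $1+xu$ in $k(q)$ is likewise nonzero, making $1+xu$ a unit in $B_q$. Therefore $A\subseteq A[1/x]\subseteq B_q$, and applying the same residue-field comparison to any $s\in A\setminus P$ shows $s$ is a unit in $B_q$, yielding $A_P\subseteq B_q$; the reverse inclusion is automatic from $B\subseteq A$ and $q = P\cap B$.
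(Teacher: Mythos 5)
Your Part~(2) is essentially sound and takes a somewhat different route from the paper (you close the cycle with (b)$\Rightarrow$(a) via a residue-field comparison, while the paper proves (c)$\Rightarrow$(a) by identifying both $A_P$ and $B_{P\cap B}$ as localizations of $R[f]$ after inverting $xp$). But Part~(1) has a fatal gap, and since Part~(2) uses Part~(1) twice (to get $A=B_\tau$ and to identify $A[1/x]$), this gap must be filled.

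The flatness argument in Part~(1) does not work. You write ``$R[T]/(T-\tau) = R[\tau]$,'' but $T-\tau$ is not an element of $R[T]$ (since $\tau\in R^*\setminus R$), so the quotient on the left is meaningless; in fact $R[\tau]$ is \emph{isomorphic to the polynomial ring} $R[T]$, not to any quotient of it, because $\tau$ is transcendental over $R$. Beyond the formal issue, the underlying claim is false: the quotient of a flat algebra by a regular element is generally \emph{not} flat over the base, and indeed $R[\tau]\hookrightarrow R^*$ is not flat. Writing $\tau = x^k u$ with $u$ a unit of $R^* = k[y]_{(y)}[[x]]$ (where $k\ge 1$ is minimal with $c_k\neq 0$), one has $\sqrt{\tau R^*} = xR^*$. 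Now take $Q := (x,y)R^* = \m_{R^*}$, so $Q\cap R[\tau] = (x,y,\tau)R[\tau]$ (note $\tau\in xR^*$), and take $q' := (y,\tau)R[\tau]\subsetneq (x,y,\tau)R[\tau]$. Any prime $Q'$ of $R^*$ with $Q'\cap R[\tau]\supseteq q'$ contains $\tau$, hence contains $x$, hence $x\in Q'\cap R[\tau]$; thus no $Q'$ lies over $q'$ inside $Q$, and going-down fails. This is precisely why the paper works harder: it establishes only that $R[\tau]\to R^*[1/x]$ is flat (which does suffice for Theorem~\ref{build}), via the chain through $k[x]_{(x)}[\tau]\to k[[x]][1/x]$, the tensor-flatness fact, and the ideal-adic completion $k[[x]][y]_{(x,y)}\to k[y]_{(y)}[[x]]$. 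You will need an argument of this kind; the base-change-by-a-regular-element shortcut cannot be repaired, because the unlocalized statement it would prove is false.

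One further small note on Part~(2): in your implication (b)$\Rightarrow$(c) you write the endpiece relation as $f_{n-1}=xf_n+xyc_n$, which is the $p=y$ special case; for the general $p=p_1\cdots p_m$ of Examples~\ref{16.5.1} the relation is $f_{n-1}=xf_n+xpc_n$, so the remainder $P_n$ is $p(c_1x+\cdots+c_nx^n)$ rather than an element of $k[x,y]$. Modulo $p_i$ this still gives $\bar f = \bar x^n\bar f_n$, so your conclusion $p_i\nmid f$ goes through, but the statement as written is imprecise. Your (b)$\Rightarrow$(a) argument --- showing $U_\tau\subseteq B_q$, then showing each $1+xu$ with $u\in U_\tau$ maps to a nonzero element of $k(q)$ because $k(q)\hookrightarrow k(P)$ is injective and $1+xu$ is a unit of $A$, and finally applying the same comparison to each $s\in A\setminus P$ --- is correct and is a genuinely different (and slightly longer) route than the paper's direct identification of both localizations as localizations of $R[f]$.
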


\begin{proof}
For item 1, to see that
 $A =  B_{\tau}$,  we first show that the  map
$$
\varphi: R[\tau]\to R^*[1/x]=k[y]_{(y)}[[x]][1/x]
$$ is flat.
By \cite[p. 46]{M}, the field of fractions $L$ of $k[x]_{(x)}[\tau]$  is flat over
$k[x]_{(x)}[\tau]$  since it is  a localization. The field   $k[[x]][1/x]$
contains  $L$ and is flat over $L$ since it has a vector space basis over $L$.
Thus  the map   $\psi: k[x]_{(x)}[\tau]\to k[[x]][1/x]$ is flat.
We use the following:
\begin{fact} \label{tensorflat} Let $C$ be a commutative ring, let $D$, $E$ and $F$ be
$C$-algebras, and let  $\psi: D \to E$ be a flat $C$-algebra homomorphism; equivalently,
$E$ is a flat $D$-module via the $C$-algebra homomorphism $\psi$. Then
$\psi\otimes_C1_F: D \otimes_CF \to E \otimes_CF$
is a flat $C$-algebra homomorphism; equivalently, $E\otimes_CF$ is a
flat $D\otimes_CF$-module via the $C$-algebra homomorphism $\psi\otimes_C 1_F$.
\end{fact}
\begin{proof} Since $E$ is a flat $D$-module, $E \otimes_D(D \otimes_CF)$ is a flat
$(D\otimes_CF)$-module by \cite[p.~46, ~Change of coefficient ring]{M}. The fact follows because
$E \otimes_D(D \otimes_CF) = E\otimes_CF$.
\end{proof}

We return to the proof of Proposition~\ref{16.5.17}. We have  $R = k[x,y]_{(x,y)}$.
Consider the following composition:
$$
R[\tau] = k[x]_{(x)}[\tau] \otimes_{k[x]_{(x)}}R
\overset{\alpha}\to  k[[x]][1/x] \otimes _{k[x]_{(x)}}R   \overset{\gamma}\hookrightarrow
k[[x]][y]_{(x,y)}[1/x].
$$
By Fact~\ref{tensorflat},  the map $\alpha$ is flat. The map $\gamma$ is a localization.
Hence the composition $ \gamma \circ \alpha$ is flat.
The extension $k[[x]][y]_{(x,y)} \to k[y]_{(y)}[[x]]$ is flat since
it is the map taking a Noetherian ring
to an ideal-adic completion \cite[Corollary~1, p. 170]{M1}. Therefore the localization map
$\beta: k[[x]][y]_{(x,y)}[1/x] \to k[y]_{(y)}[[x]][1/x]$ is flat. Thus the map
$\varphi = \beta \circ \gamma \circ \alpha: R[\tau] \to k[y]_{(y)}[[x]][1/x]$ is flat.
Theorem~\ref{build}  implies  that  $A =  B_{\tau}$.
By Remark~\ref{16.5.11}, the ring $A[1/x]$ is a localization of $R[\tau]$.

For item 2,  since $\tau \in A$, (a) $\implies$ (b) is clear. For (b) $\implies$ (c) we show that
$p \in P \implies \tau \notin B_{P \cap B}$.  By Remark~\ref{16.5.11}, $B[1/x]$ is a localization
of $R[f]$.
Since $x \notin P$, the ring
$B_{P \cap B}$ is a localization of $R[f]$, and thus
$B_{P \cap B} = R[f]_{P \cap R[f]}$. The assumption that $p \in P$ implies that some
$p_i \in P$, and so $R[f]_{P \cap R[f]}$ is contained in the DVR  $V:=R[f]_{p_iR[f]}$.
Since $R[f]$ is a
polynomial ring over $R$, $f$ is a unit in $V$.
Hence $\tau = f/p \notin V$ and thus $\tau\notin R[f]_{P\cap R[f]}$.
This shows that (b) $\implies$ (c).

For (c) $\implies$ (a), notice  that $f=p\tau$ implies that $R[f][1/xp]=R[\tau][1/xp]$.
By item 1,
$A[1/x]$ is a localization of $R[\tau][1/x]$ and so $A[1/xp]$ is a localization
of $R[\tau][1/xp]=
R[f][1/xp]$. Thus $A[1/xp]$ is a localization of $R[f]$. By Remark~\ref{16.5.11}, $B[1/x]$
is a localization of $R[f]$. Since $xp\notin P$ and $x\notin P\cap B$, we have that $A_P$
and $B_{P\cap B}$
 are both localizations of $R[f]$. Thus we have
$$ A_P=R[f]_{PA_P\cap R[f]}=R[f]_{(P\cap B)B_{P\cap B}\cap R[f]}=B_{P\cap B}.
$$
This completes the proof of Proposition~\ref{16.5.17}.
\end{proof}

We observe in Proposition~\ref{16.5.18} that over a perfect field $k$ of
characteristic $p > 0$ (so that $k = k^{1/p}$)  a one-dimensional form of the construction
yields a DVR that is not a Nagata ring, and thus not excellent; see \cite[p. 264]{M},
\cite[Theorem~78, Definition~34.8]{M1}.

\begin{proposition} \label{16.5.18} Let $k$ be a perfect field of characteristic $p > 0$
and let $\tau \in xk[[x]]$ be such that $x$ and $\tau$ are algebraically independent
over $k$. Let $V := k(x, \tau) \cap k[[x]]$. Then $V$ is a DVR for which
 the integral closure $\overline{V}$ of $V$ in
the purely inseparable field extension $k(x^{1/p}, \tau^{1/p})$ is not a finitely
generated $V$-module. Hence  $V$ is not a Nagata ring.
\end{proposition}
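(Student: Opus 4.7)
My plan is to show that $V$ is a DVR and that its integral closure $\overline V$ in $F' := k(x^{1/p}, \tau^{1/p})$ coincides with the DVR $V' := F' \cap k[[x^{1/p}]]$, and then derive a contradiction from comparing two different rank computations after $x$-adic completion. First I would observe that $V$ is a valuation ring on $F := k(x,\tau)$ since it is the intersection of the DVR $k[[x]]$ (viewed inside $k((x))$) with the subfield $F$; its value group is a nonzero subgroup of $\mathbb Z$, so $V$ is a DVR with uniformizer $x$, residue field $k$, and $x$-adic completion $\widehat V = k[[x]]$. By the same reasoning applied inside $k((x^{1/p}))$, the ring $V' := F' \cap k[[x^{1/p}]]$ is a DVR with uniformizer $x^{1/p}$, residue field $k$, and completion $k[[x^{1/p}]]$.

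Next I would check that $[F':F] = p^2$ and that $\overline V = V'$. The degree computation uses that $\tau$ is transcendental over $k(x)$, hence $\tau^{1/p} \notin k(x^{1/p}, \tau)$, by a short prime-factorization argument in the polynomial ring $k[x^{1/p}, \tau]$. Since $F'/F$ is purely inseparable, $V$ has a unique extension to $F'$, which must be $V'$; in particular $V' \cap F = V$ (the ramification index is $p$, so $v'|_F = p\cdot v$). For $\alpha \in V'$ we then have $\alpha^{p^2} \in V' \cap F = V$, so $\alpha$ is a root of the monic polynomial $T^{p^2} - \alpha^{p^2} \in V[T]$, proving $V' \subseteq \overline V$; the reverse containment is immediate since $V'$ is integrally closed and contains $V$.

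For the key step, suppose for contradiction that $\overline V = V'$ were a finitely generated $V$-module. Being torsion-free over the PID $V$, it would be free of rank $[F':F] = p^2$, and thus $V' \otimes_V \widehat V$ would be a free $k[[x]]$-module of rank $p^2$. On the other hand, finite generation forces this tensor product to equal the $(xV)$-adic completion of $V'$; since $xV' = (x^{1/p})^p V' = \mathfrak m_{V'}^p$, the $x$-adic and $\mathfrak m_{V'}$-adic topologies on $V'$ agree, so this completion is $k[[x^{1/p}]]$. But the decomposition $k[[x^{1/p}]] = \bigoplus_{i=0}^{p-1} x^{i/p} k[[x]]$ exhibits $k[[x^{1/p}]]$ as a free $k[[x]]$-module of rank $p$. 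Since $p \ne p^2$, this contradicts uniqueness of rank for free modules. The Nagata conclusion is then immediate, as $F'/F$ is a finite extension in which the integral closure of $V$ fails to be module-finite.

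The main obstacle I foresee is keeping clear the distinction between the $x$-adic and $\mathfrak m_{V'}$-adic topologies on $V'$ and correctly invoking finite generation to interchange tensor product with completion; once that bookkeeping is in place, the two rank computations ($p^2$ versus $p$) deliver the contradiction.
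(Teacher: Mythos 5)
Your argument is correct but takes a genuinely different route from the paper's. The paper factors $k(x,\tau) \subset k(x^{1/p},\tau^{1/p})$ through the intermediate field $L := k(x^{1/p},\tau)$, identifies $W := L \cap k[[x^{1/p}]]$ and $\overline{V}$ as DVRs sharing residue field $k$ and maximal ideal generated by $x^{1/p}$, so that $\overline{V} = W + x^{1/p}\overline{V}$, and then applies Nakayama's Lemma: finite generation of $\overline{V}$ over $W$ would force $\overline{V} = W$, impossible since their fraction fields differ. You instead base-change to the completion $\widehat{V} = k[[x]]$ and count ranks: finite generation together with torsion-freeness over the PID $V$ would make $\overline{V}$ free of rank $[F':F]=p^2$, yet finite generation also identifies $\overline{V} \otimes_V k[[x]]$ with the $x$-adic completion of $\overline{V}$, which (since $x\overline{V}$ is the $p$-th power of its maximal ideal) equals $k[[x^{1/p}]]$, free of rank only $p$ over $k[[x]]$. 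Both arguments rest on the same identification of $\overline{V}$ as the DVR $k(x^{1/p},\tau^{1/p}) \cap k[[x^{1/p}]]$ with uniformizer $x^{1/p}$ and residue field $k$. The paper's route is a little more elementary and avoids invoking that completion commutes with tensoring against a finitely generated module, whereas your rank comparison makes the numerical discrepancy explicit — $[F':F] = p^2$ against the product of ramification index and residue degree $p\cdot 1 = p$ — which is exactly the defect that blocks module-finiteness of the integral closure.
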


\begin{proof}
It is clear that $V$ is a DVR with maximal ideal $xV$.
Since $x$ and $\tau$ are algebraically
independent over $k$, $[k(x^{1/p}, \tau^{1/p}): k(x, \tau)] = p^2$.
Let $W$ denote
the integral closure of  $V$  in the field extension $k(x^{1/p}, \tau)$ of
degree $p$ over $k(x, \tau)$. Notice that
$$
W ~ = ~ k(x^{1/p}, \tau) ~\cap ~ k[[x^{1/p}]] \quad \text{and} \quad
\overline{V} ~ = ~ k(x^{1/p}, \tau^{1/p}) ~\cap ~ k[[x^{1/p}]]
$$
are both DVRs having residue field $k$ and maximal ideal generated by $x^{1/p}$.
Thus $\overline{V} = W + x^{1/p}\overline{V}$. If $\overline{V}$ were a finitely
generated $W$-module, then by Nakayama's Lemma it would follow that $W = \overline{V}$.
This is impossible because $\overline{V}$ is not birational over $W$. It follows
that $\overline{V}$ is not a finitely generated $V$-module, and hence $V$ is not a
Nagata ring.
\end{proof}

\section{Verification of the 3-dimensional examples} \label{4}

In Theorem~\ref{16.5.2} we record and establish the properties asserted
in  Examples~\ref{16.5.1} and other properties of the ring $B$.

\begin{theorem} \label{16.5.2} With the notation of Example~\ref{16.5.1}, let
$Q_i:=p_iR^*\cap B$, for each $i$ with $1\le i\le m$. We have:
\begin{enumerate}
\item  The ring $B$ is a three-dimensional non-Noetherian local UFD with
maximal ideal $\n=(x,y)B$, and
the $\n$-adic completion
of $B$ is the two-dimensional regular local ring $k[[x,y]]$.

\item The rings $B[1/x]$ and $B_P$, for each nonmaximal prime ideal $P$ of $B$, are
regular Noetherian UFDs, and
the ring $B/xB$ is a DVR.
\item The ring $A$ is a two-dimensional regular local domain
with maximal ideal $\m_A := (x, y)A$, and $A=B_{\tau}$.
The ring $A$ is excellent if  the field $k$ has characteristic zero.
If $k$ is a perfect field of characteristic $p$, then $A$ is not excellent
\item The ideal
$\m_A$  is the only prime ideal of $A$ lying over
$\n$.
\item The ideals $Q_i$ are the only height-two prime ideals of $B$.
\item The ideals $Q_i$ are not finitely generated and they are the only
nonfinitely generated prime ideals of $B$.

\item  The ring $B$ has saturated chains of prime ideals from $(0)$ to $\n$
of length two and of  length three, and
hence is not catenary.

\end{enumerate}
\end{theorem}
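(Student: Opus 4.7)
The plan is to assemble the seven assertions by specializing the general results (Theorem~\ref{build}, Propositions~\ref{16.5.15},~\ref{Bufd}, and~\ref{16.5.17}) to $R=k[x,y]_{(x,y)}$, $a=x$, with $\tau$ and $f=p\tau$ as chosen; only a few points require direct arguments on the filtration $\{B_n\}$.

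For items~1 and~2 I would invoke Proposition~\ref{Bufd} to get that $B$ is a UFD with $xB$ prime and $B[1/x]$ a regular Noetherian UFD, and use Equation~(\ref{16.5.15}.0) to identify $B/xB$ with the DVR $k[y]_{(y)}$. The $(x)$-adic completion of $B$ is $R^*$ by Proposition~\ref{16.5.15}.1, and completing further at $y$ yields the $\n$-adic completion $k[[x,y]]$. Nonmaximal localizations split into the case $x\notin P$ (so $B_P$ is a localization of the regular Noetherian UFD $B[1/x]$) and the case $P=xB$ (forced because $B/xB$ is a DVR), where $B_{xB}$ is a DVR. The inclusion $B_0=R[f]_{(x,y,f)}\subseteq B$ gives $\dim B\ge 3$; combined with item~5 and Proposition~\ref{16.5.15}.5, $\dim B=3$. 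For items~3 and~4, Proposition~\ref{16.5.17}.1 identifies $A=B_\tau$, and (\ref{16.5.1}.0) says $A$ is a two-dimensional RLR with maximal ideal $(x,y)A$; since $\n A=\m_A$, item~4 is immediate. Excellence in characteristic zero is by~\cite{V}; for perfect $k$ of positive characteristic I would adapt Proposition~\ref{16.5.18} to place a non-Nagata DVR inside $A$.

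The main obstacle is items~5 and~6. Using the endpiece recursion $f_n=xf_{n+1}+pxc_{n+1}$, I plan to verify that $Q_i\cap B_n=(p_i,f_n)B_n$, a height-two prime in the regular local ring $B_n$; passing to the directed union then gives $\hgt Q_i=2$. If $Q_i$ were finitely generated, then $Q_i=(Q_i\cap B_n)B=(p_i,f_n)B$ for some $n$, yet the recursion forces $f_{n+1}\in Q_i\setminus(p_i,f_n)B$, a contradiction. Uniqueness of the height-two primes uses Proposition~\ref{16.5.15}.5: a height-two prime $P$ of $B$ cannot contain $x$ (since $B/xB$ is a DVR, any prime of $B$ containing $x$ is either $xB$ or $\n$), so $PR^*$ lies in a nonmaximal, hence height-one, prime of the two-dimensional $R^*$, and one identifies this prime as some $p_iR^*$. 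That these are the only non-finitely-generated primes of $B$ is now quick: $(0)$ and $\n=(x,y)B$ are obviously finitely generated, and every height-one prime is principal because $B$ is a UFD by item~1.

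Finally, item~7 is immediate: the chain $(0)\subset xB\subset\n$ is saturated of length two because $B/xB$ is a DVR, while since $\hgt Q_i=2$ and $\hgt(\n/Q_i)=1$, a saturated chain of length three passes through $Q_i$, so $B$ is not catenary.
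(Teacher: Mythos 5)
Your plan follows the paper's route almost step for step: Proposition~\ref{Bufd} and Equation~\ref{16.5.15}.0 for items~1--2, Proposition~\ref{16.5.17}.1 and Valabrega's result for items~3--4, the endpiece recursion for item~6, and the two explicit chains for item~7. Two places are more than just ``details to be filled in,'' however, and you should be aware that the paper handles them differently.

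First, for the positive-characteristic part of item~3, you propose to ``place a non-Nagata DVR inside $A$.'' A subring being non-Nagata gives no information about $A$: the Nagata and excellence properties do not descend to arbitrary subrings. What the paper actually does is observe that $V:=k[[x]]\cap k(x,\tau)$ is a \emph{homomorphic image} of $A$ (namely $V\cong A/yA$, since $A=V[y]_{(x,y)}$), and excellence passes to homomorphic images. You need the quotient, not the subring.

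Second, the genuinely hard content of item~5 is hidden inside your phrase ``one identifies this prime as some $p_iR^*$.'' After Proposition~\ref{16.5.15} tells you $PR^*$ sits inside a height-one prime of $R^*$, there is no a priori reason that prime must be one of the finitely many $p_iR^*$; $R^*$ has infinitely many height-one primes. The paper rules out the other possibilities by invoking Proposition~\ref{16.5.17}: if $p=p_1\cdots p_m\notin P$, then $B_P$ is a localization of $A$ at a height-one prime, forcing $\hgt P=1$, a contradiction. Only then does it follow that some $p_i\in P$, and hence $p_iR^*=PR^*$ and $P$ is squeezed between $p_iB$ and $Q_i$. Without this flatness comparison with $A$, the uniqueness claim in item~5 does not go through. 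The rest of your sketch (the recursion argument for item~6, the chains for item~7, the dimension count) matches the paper and is sound once these two steps are repaired.
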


\begin{proof}  For item 1, since $B$ is a directed union of three-dimensional
regular local domains,
 $\dim B \le 3$. By Proposition~\ref{16.5.15},  $B$ is local with maximal ideal $(x,y)B$, $xB$
and $p_iB$ are prime ideals, and the $(x)$-adic completion of $B$
is equal to $R^*$, the $(x)$-adic completion of $R$. Thus  the $\n$-adic completion of $B$ is
$k[[x,y]]$.
 Since each $Q_i=\bigcup_{i=1}^\infty Q_{in}$, where $Q_{in}=p_iR^*\cap B_n$,
 we see that each $Q_i$ is a prime ideal of $B$ with $p_i,f \in Q_i$ and $x\notin  Q_i$.
Since $p_iB=\bigcup p_iB_n$, we have $f\notin p_iB$.
Thus $$(0)\subsetneq p_iB\subsetneq Q_i\subsetneq (x,y)B.$$
 This chain of prime ideals of length at least three yields that $\dim B = 3$ and
that the height of each $Q_i$ is $2$.

The map $ S = R[f] \to R^*[1/x]$ is not flat since flat extensions satisfy  the going-down
property \cite[Theorem~9.5, p. 68]{M}, and
$p_iR^*[1/x]$ is a height-one prime whereas  $p_iR^*[1/x] \cap S = (p_i, f)S$ is a
height-two prime.  Therefore Theorem ~\ref{build} implies that the  ring $B$ is not Noetherian.
  By Proposition~\ref{Bufd}, $B$ is a UFD, and so item 1 holds.

For item 2, by Equation~\ref{16.5.15}.0, $B/xB$ is  a DVR.
By Proposition~\ref{Bufd}, $B[1/x]$ is a
 regular Noetherian UFD.  If $x\in P$ and $P$ is nonmaximal, then, again
by Equation~\ref{16.5.15}.0,  $P=xB$.  If $x \not\in P$, the ring $B_{P}$ is a
localization of $B[1/x]$ and so  is a regular Noetherian UFD. Thus item 2 holds.

 The statement in  item 3 that $A$ is a two-dimensional regular local domain
with maximal ideal $\m_A = (x,y)A$ follows by a result of Valabrega
\cite{V}; see  Equation~\ref{16.5.1}.0. By Proposition~\ref{16.5.17}.1, we have
$A=B_\tau$.
The ring $V := k[[x]] \cap k(x, \tau)$ is a DVR by \cite[(33.7)]{N2}.
If  the field $k$ has characteristic zero, then
$V$ is excellent by  \cite[Chap IV]{G}, \cite[Folgerung~3]{R0}. Since
$A$ is a
localization of $V[y]$, it follows that $A$ is also excellent if
$k$ has characteristic zero.

Assume the field $k$ is perfect with characteristic $p > 0$. By
Proposition~\ref{16.5.18}, the ring $V$ is not excellent. Since
$A = V[y]_{(x,y)}$, the ring $V$ is a homomorphic image of $A$. Since excellence is
preserved under homomorphic image, the ring $A$ is not excellent.
This completes the proof of item~3.

By Equation~\ref{16.5.15}.0, $B/xB = A/xA = R^*/xR^*$. Hence
$\m_A = (x, y)A$ is the unique prime of $A$ lying over
$\n = (x, y)B$. Thus  item~4 holds and for item 5 we see that $x$ is not in any
height-two prime ideal
 of $B$.

To complete the proof of  item~5,  it remains to consider $P \in \Spec B$ with
$x \not\in P$ and $\hgt P>1$.
By Proposition~\ref{16.5.15}.4, we have $x^n  \not\in PR^*$ for each $n \in \N$.
Thus $\hgt(PR^*) \le 1$.
Since $A \hookrightarrow R^*$ is
faithfully flat, $\hgt(PA) \le 1$. Let $P'$ be a height-one prime ideal of $A$ containing $PA$.
Since $\dim B = 3$, $\hgt P > 1$ and $x \not\in P'\cap B$, it follows that $P = P' \cap B$.
If
$p \notin P$, then  Proposition~\ref{16.5.17} implies that  $A_{P'}=B_P$.
Since $P'$ is a height-one prime ideal of $A$,  it follows that
$P$ is a height-one prime ideal of $B$.

Now suppose that $p_i\in P$ for some $i$. Then  $p_iR^*$ is a
height-one prime ideal contained in
$ PR^*$
 and so $p_iR^*= PR^*$. Hence $P$ is squeezed between $p_iB$ and $Q_i=p_iR^*\cap B\ne (x,y)B$.
Since
 $\dim B=3$, either $P$ has height one or $P=Q_i$ for some $i$.  This completes the proof of item 5.

For item 6, we show that each $Q_i$ is not finitely generated by showing
 for each $n \ge 0$, that
$f_{n+1} \not\in (p_i, f_n)B$.
By Equation~\ref{16.5.1}.b, we   have  $\tau_n = c_{n+1}x + x\tau_{n+1}$,  and hence
$f_n =  xf_{n+1} ~+~pxc_{n+1}$.
Assume that $f_{n+1} \in  (p_i, f_n)B$. Then
$$
(p_i, f_n)B = (p_i, xf_{n+1} +pxc_{n+1})B ~ \implies ~ f_{n+1} = ap_i+b(xf_{n+1} + pxc_{n+1}),
$$
for some $a, b \in B$. Thus $f_{n+1}(1 - xb) \in p_iB$. Since $1-xb$ is a unit of $B$,
it follows that $f_{n+1} \in p_iB$, and thus
$f_{n+1} \in p_iB_{n+r}$, for some $r \ge 1$. The relations $f_t = xf_{t+1} + pxc_{t+1}$,
for each $t \in \N$, imply that
$$
f_{n+1} = xf_{n+2} + pxc_{n+2} = x^2f_{n+3} + px^2c_{n+3} + pxc_{n+2} = \cdots =
x^{r-1}f_{n+r} + p\alpha,
$$
where $\alpha \in R$. Thus  $x^{r-1}f_{n+r} \in (p, f_{n+1})B_{n+r}$. Since $f_{n+1} \in p_iB_{n+r}$,
we have  $x^{r-1}f_{n+r} \in p_iB_{n+r}$. This implies $f_{n+r} \in p_iB_{n+r}$, a contradiction
because the ideal $(p_i, f_{n+r})B_{n+r}$ has height two.
We conclude that   $Q_i$ is
not finitely generated.

Since $B$ is a UFD, the height-one primes of $B$ are principal and since the maximal
ideal of $B$ is two-generated, every nonfinitely generated prime ideal of $B$
has height two and thus
is in the set   $\{Q_1, \ldots, Q_m\}$.  This completes the proof of item~6.

For item~7,  the chain $(0) \subset xB \subset (x,y)B = \m_B$ is saturated and has length two,
while the
chain $(0) \subset p_1B \subset Q_1 \subset \m_B$ is saturated and has length three.
\end{proof}

\medskip

\begin{remark} With the notation of Examples~\ref{16.5.1} and Theorem~\ref{16.5.2} we obtain the following additional details about the prime ideals  of $B$.
\begin{enumerate}
\item If  $P \in \Spec B$ is nonzero and  nonmaximal, then
$\hgt(PR^*) = 1$ and $\hgt(PA) = 1$. Thus
every  nonmaximal prime of $B$ is contained in a nonmaximal
prime of $A$.
\item
If $P \in \Spec B$ is such that $P \cap R = (0)$, then $\hgt(P) \le 1$ and
$P$ is principal.

\item If $P \in \Spec B$, $\hgt P =  1$ and $P \cap R \ne 0$,
then $P = (P \cap R)B$.

\item
Let $p_i$ be one of the prime
factors of $p$. Then $p_iB$ is prime in $B$. Moreover
the  ideals
$p_iB$ and $Q_i := p_iA \cap B =    (p_i, f_1, f_2, \ldots )B $ are the
only nonmaximal  prime ideals of $B$ that contain $p_i$. Thus they
are the only prime ideals of $B$ that lie  over $p_iR$ in $R$.

\item The constructed ring $B$ has Noetherian spectrum
\end{enumerate}\end{remark}

\begin{proof} For the proof of item~1, if $P = Q_i$ for some $i$, then $PR^* \subseteq p_iR^*$
and $\hgt PR^* = 1$. If $P$ is not one of the $Q_i$, then by Theorem~\ref{16.5.2} $P$ is a
principal height-one prime and $\hgt PR^* = 1$. Since $A$ is Noetherian and local, $R^*$ is
faithfully flat over $A$ and hence $\hgt PA = 1$.

The proof of item 1 is contained in the proof of item 5 of Theorem~\ref{16.5.2}.

For item~2, $\hgt P \le 1$ because
the field of fractions $K(f)$ of $B$ has transcendence degree one
over the field of fractions $K$ of $R$.  Since $B$ is a UFD, $P$ is principal.

For item~3, if $x \in P$, then $P = xB$ and the statement
is clear. Assume $x \not\in P$. By Remark~\ref{16.5.11},
$B[1/x]$ is a localization of $B_n$, and so $\hgt(P\cap B_n)=1$ for all integers $n \ge 0$.
Thus $(P \cap R)B_n = P \cap B_n$, for each $n$,   and so $P = (P \cap R)B$.

For item~4,  each $p_iB$ is prime by Proposition~\ref{16.5.15}.3. By Theorem~\ref{16.5.2},
$\dim B=3$ and
the $Q_i$ are the only height-two primes  of $B$. Since for $i \ne j$, the
ideal $p_iR + p_jR$ is $\m_R$-primary,  it follows that $p_iB + p_jB$  is $\n$-primary, and hence
$p_iB$ and $Q_i$ are the only nonmaximal prime ideals of $B$  that contain $p_i$.

Item~5 follows from Theorem~\ref{16.5.2}, since the prime spectrum is Noetherian if it satisfies the ascending chain condition and if, for each finite set in the spectrum,
there are only finitely many points 
minimal with respect to containing all of them. Thus the proof is complete.
\end{proof}

\begin{remark} \label{coherence} Rotthaus and Sega prove that the rings $B$ of
Theorem~\ref{build}, Theorem~\ref{16.5.2}, and Theorem~\ref{16.5.4t}  are coherent and
regular in the sense that every finitely generated submodule of a free module has a finite
free resolution \cite{RS}. For the ring
$B = \bigcup_{n=1}^\infty B_n$ of these constructions, it is stated in \cite{RS}
that $B_n[1/x] = B_{n+k}[1/x] = B[1/x]$
and that $B_{n+k}$ is generated over $B_n$ by a single element for all positive integers
$n$ and $k$.  This is not correct for the local rings $B_n$. However, if instead of using
the localized polynomial rings $B_n$ and their union $B$ of the construction for these
theorems, one uses the underlying polynomial rings $U_n$ and their union $U$ defined in
Theorem~\ref{build},
then one does have that $U_n[1/x] = U_{n+k}[1/x] = U[1/x]$ and that $U_{n+k}$ is
generated over $U_n$
by a single element for all positive integers
$n$ and $k$.
\end{remark}

\vskip 3pt

We use the following lemma.

\begin{lemma} \label{prime} Let the notation be as in  Examples~\ref{16.5.1} and  Theorem~\ref{16.5.2}. \begin{enumerate}
\item For every
 element $c \in \m_R \setminus xR$ and every $t \in \N$, the element $c + x^tf$ is a prime
element of the UFD $B$.
\item For every fixed  element $c \in \m_R \setminus xR$, the set $\{ c+x^tf\}_{t\in\N}$
consists of infinitely many nonassociate prime elements of $B$, and so
 there exist infinitely many distinct height-one primes of $B$ of the form $(c + x^tf)B$.
\end{enumerate}
\end{lemma}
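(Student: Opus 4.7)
For part (1), my plan is to rewrite $F := c + x^t f$ as a linear polynomial in $f_n$ inside $B_n$ for each $n \ge 0$. Iterating (\ref{16.5.1}.b) gives $\tau = P_n(x) + x^n \tau_n$ with $P_n(x) := c_1 x + c_2 x^2 + \cdots + c_n x^n \in k[x]$, whence $f = p P_n(x) + x^n f_n$. Substituting produces
\[
F \;=\; \bigl(c + x^t p P_n(x)\bigr) + x^{t+n} f_n \quad \text{in } R[f_n],
\]
a linear polynomial in $f_n$ whose two $R$-coefficients have $\gcd = 1$: the constant-in-$f_n$ term reduces to $c$ modulo $xR$, nonzero since $c \notin xR$, while $x$ is the only prime dividing the other coefficient $x^{t+n}$. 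Hence $F$ is irreducible in the UFD $R[f_n]$. Since each summand of $F$ lies in $(x,y,f_n)R[f_n]$, the prime ideal $F\cdot R[f_n]$ is disjoint from the multiplicative set used to form $B_n = R[f_n]_{(x,y,f_n)}$, so $FB_n$ is a height-one prime of $B_n$. A standard directed-union argument then shows that $FB = \bigcup_n FB_n$ is prime in $B$, giving (1).

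For part (2), I plan to work inside $S := R[f] = U_0$ and leverage Remark~\ref{16.5.11}, which says $B[1/x]$ is a localization of $S$; write $B[1/x] = S_W$ for some multiplicative set $W \subset S$. In $S$, viewed as a polynomial ring in $f$ over the UFD $R$, each $F_s := c + x^s f$ is irreducible by the same $\gcd$ argument, and the units of $S$ coincide with the units of $R$. So an associate relation $F_s = u F_t$ in $S$ forces a unit $u \in R$ with $c = uc$ and $x^s = u x^t$, giving $u = 1$ and $s = t$. Hence the height-one primes $F_s S$ of $S$ are pairwise distinct.

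Transferring this distinctness back to $B$ is the main technical step. I claim each $F_s$ is a non-unit in $B[1/x]$: by part (1) $F_s$ is prime in $B$, and $F_s \equiv c \not\equiv 0 \pmod{xB}$ (using $c \in R\setminus xR$ and $xR = xB \cap R$), so $F_s$ is not associate to $x$ in the UFD $B$ and therefore divides no power of $x$. Consequently $F_s S \cap W = \emptyset$, and the prime correspondence for localizations identifies $F_s B[1/x]$ with $F_s S$. Finally, if $F_s$ and $F_t$ were associate in $B$ for some $s \ne t$, the associating unit would remain a unit in $B[1/x]$, giving $F_s B[1/x] = F_t B[1/x]$ and hence $F_s S = F_t S$, contradicting the previous paragraph. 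This establishes (2).
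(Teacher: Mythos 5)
Your proof is correct.  For part~(1) you carry out, in closed form with the polynomial $P_n(x)$, exactly the iteration the paper does by hand for $n=0,1,2$: rewrite $c+x^tf$ as a linear polynomial in $f_n$ whose two $R$-coefficients are coprime (the $f_n$-coefficient is $x^{t+n}$ and the constant term is $\equiv c \not\equiv 0 \pmod{xR}$), hence irreducible in $R[f_n]$, hence prime in $B_n$, and then pass to the directed union; this is essentially the paper's argument, cleanly generalized.

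For part~(2) you take a genuinely different route.  The paper argues by contradiction: if $(c+x^tf)B=(c+x^mf)B=:q$ with $t>m$, subtraction and the facts that $x^m\notin q$ and $x^{t-m}-1$ is a unit force $f\in q$ and so $(c,f)B\subseteq q$; then, since $x\notin q$, $B_q$ is a localization of $S:=R[f]$ (Remark~\ref{16.5.11}), contradicting $\hgt\bigl((c,f)S\bigr)=2$.  You instead establish pairwise non-association directly in the polynomial ring $S$ by comparing coefficients (units of $S$ are units of $R$, so $c=uc$ and $x^s=ux^t$ force $u=1$, $s=t$), and then transport this to $B$ via the localization $B[1/x]=S_W$, using the prime correspondence after checking that each $(c+x^sf)S$ misses $W$ (equivalently, that $c+x^sf$ is not a unit of $B[1/x]$, which you justify by noting it divides no power of $x$ in the UFD $B$).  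Both routes rest on the same two inputs — Remark~\ref{16.5.11} and the coprime-linear-coefficient criterion — and are comparable in length; the paper's version makes the ideal $(c,f)$ the focal point, while yours avoids it at the cost of the extra disjointness check $F_sS\cap W=\emptyset$, which you handle correctly.
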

 \begin{proof}
 For the first item, since $f=p\tau$,
Equation~\ref{16.5.1}.b implies that
  $$
  f_r=pc_{r+1}x +xf_{r+1}.
  $$
  In
$B_0=k[x,y,f]_{(x,y,f)}, $ the polynomial $c+x^tf$
  is linear  in the variable $f=f_0$ and the coefficient $x^t$ of $f$
is relatively prime to the constant term
  $c$.   Thus   $c+x^tf$ is irreducible in $B_0$.
  Since $f=f_0=pc_1x+xf_1$ in $B_1=k[x,y,f_1]_{(x,y,f_{1})}$,   the
polynomial $c+x^tf=c + x^tpc_1x+x^{t+1}f_1$  is   linear
in the variable $f_1$ and the coefficient $x^{t+1}$ of $f_1$ is relatively prime to the constant term
  $c$.  Thus   $c +  x^tf$ is irreducible in $B_1$.
To see that this pattern continues, observe that  in $B_2$, we have
  \begin{equation*}
  \aligned
  f~=~&pc_1x+xf_1~=~pc_1x+pc_2x^2+x^2f_2\implies \\
  &c +x^tf
 ~=~c + pc_1x^{t+1}+pc_2x^{t+2} +x^{t+2}f_2,
  \endaligned
  \end{equation*}
 a  linear polynomial in the variable $f_2$.   Thus   $c +x^tf$ is irreducible in $B_2$ and a
 similar argument shows that  $c +x^tf$ is irreducible in $B_r$ for each positive integer $r$.
 Therefore for each $t \in\N$,  the element  $c + x^tf$ is
 prime in $B$.

For item 2,  observe  that $(c + x^tf)B \ne (c + x^mf)B$, for positive
integers $t > m$.  If $(c + x^tf)B = (c+ x^mf)B :=
q$, a height-one prime ideal of $B$, then
$$
(x^t - x^m)f~  =~ x^m(x^{t-m} - 1)f ~\in ~q.
$$
Since  $c \notin xB$ we have $q \ne xB$. Thus
$x^m \notin q$. Also $x^{t-m}- 1$ is a unit of $B$. It follows that $f \in q$ and thus
$(c, f)B \subseteq q$.

By Remark~\ref{16.5.11}, $B[1/x]$ is a localization of $R[f] = S$,   and
$x \notin q$  implies  that $B_q = S_{q \cap S}$. This is a
contradiction since the ideal $(c, f)S$ has
height two. Thus there exist infinitely many distinct height-one primes of
the form $(c + x^tf)B$.
\end{proof}

Lemma~\ref{lost} is useful for giving a  more precise description of $\Spec B$ for $B$
as in Examples~\ref{16.5.1}. For each nonempty
finite subset $H$   of $\{Q_1, \ldots, Q_m\}$, we show there exist infinitely many
height-one prime ideals contained in each $Q_i  \in H$, but not contained in $Q_j$
if $Q_j \notin H$. 

\begin{lemma} \label{lost} Let the notation be as in Theorem~\ref{16.5.2}.
Let $G$ be a nonempty subset of $\{1, \ldots , m\}$, and let
$H = \{Q_i ~|~ i \in G \}$. Let $p_G = \prod \{ p_i ~|~i \in G \}$. Then
for each $t \in \N$, we have
\begin{enumerate}
\item
$(p_G + x^tf)B$ is a prime ideal of $B$ that is lost in $A$.
\item
$(p_G^2 + x^tf)B$ is a prime ideal of $B$ that is not lost in $A$.
\end{enumerate}
The sets $\{(p_G + x^tf)B \}_{t \in \N}$ and $\{(p_G^2  + x^tf)B \}_{t \in \N}$ are both
infinite.
Moreover, the prime ideals in both item 1 and item 2 are contained in each $Q_i$ such
that $Q_i \in H$, but are  not contained in  $Q_j$
if $Q_j \notin H$.
\end{lemma}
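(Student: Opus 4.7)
The plan is to deduce both assertions from Lemma~\ref{prime}, Proposition~\ref{16.5.17}, and Theorem~\ref{16.5.2}, built around the two factorizations in $R[\tau]\subseteq A$
\[
p_G + x^tf \;=\; p_G\bigl(1 + x^t p_{G^c}\tau\bigr) \quad\text{and}\quad p_G^2 + x^tf \;=\; p_G\bigl(p_G + x^t p_{G^c}\tau\bigr),
\]
where $p_{G^c} := \prod_{j\notin G}p_j$ and $p = p_G p_{G^c}$. Since $p_G, p_G^2\in\m_R\setminus xR$, Lemma~\ref{prime} gives at once that each $(p_G+x^tf)B$ and each $(p_G^2+x^tf)B$ is a principal prime of $B$ and that both families $\{(p_G+x^tf)B\}_{t\in\N}$ and $\{(p_G^2+x^tf)B\}_{t\in\N}$ are infinite. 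The incidence with the $Q_i$ is arithmetic: for $i\in G$, $p_i$ divides $p_G$, $p_G^2$, and $p$, so $p_G, p_G^2, f=p\tau\in p_iR^*\cap B=Q_i$; for $j\notin G$, $f\in Q_j$ but $p_G\notin p_jR^*$ because the primes $p_iR^*$ for $i\in G$ are distinct from $p_jR^*$, so neither sum lies in $Q_j$.

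For item~1, the factor $1+x^tp_{G^c}\tau$ lies in $1+\m_A$ and is a unit of $A$, so $(p_G+x^tf)A = p_GA$ and $p_G\in(p_G+x^tf)A\cap B$. Losing follows once I show $p_G\notin(p_G+x^tf)B$. Assume otherwise; then $x^tf\in(p_G+x^tf)B$, and since the principal prime $(p_G+x^tf)B$ is not $xB$ (else $p_G\in xB\cap R=xR$, contradicting $p_G\notin xR$), $x$ lies outside it, forcing $f\in(p_G+x^tf)B$ and hence $(p_G,f)B\subseteq(p_G+x^tf)B$, a height-one prime. The main obstacle is contradicting this by showing $(p_G,f)B$ has height two: every minimal prime of $(p_G,f)B$ contains some $p_i$ with $i\in G$, hence contains $(p_i,f)B\supsetneq p_iB$ once $f\notin p_iB$ is established. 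The assumption $f=p_ib$ with $b\in B$ yields $b=(p/p_i)\tau$ in $A$; the factor $p/p_i=\prod_{j\ne i}p_j$ lies outside $Q_i=p_iR^*\cap B$ by distinctness of the $p_jR^*$, so it is a unit of $B_{Q_i}$, giving $\tau=b/(p/p_i)\in B_{Q_i}$; this contradicts Proposition~\ref{16.5.17}.2 applied to $P=p_iA$ (for which $P\cap B=Q_i$ and $p\in P$, forcing $\tau\notin B_{Q_i}$).

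For item~2, set $\alpha:=p_G+x^tp_{G^c}\tau\in R[\tau]$, so $w:=p_G^2+x^tf=p_G\alpha$ in $A$, and $\alpha\in\m_A$ is a nonzero nonunit. The strategy is to exhibit a height-one prime $\mathfrak p^*$ of $A$ containing $w$ with $\mathfrak p^*\cap B=wB$, forcing $wA\cap B\subseteq\mathfrak p^*\cap B=wB$. Take $\mathfrak p^*=\beta A$ to be any minimal prime of $\alpha A$ in the two-dimensional UFD $A$, where $\beta\mid\alpha$ is prime. I first verify $\mathfrak p^*\ne xA$ and $\mathfrak p^*\ne p_iA$ for all $i$: modulo $xA$, $\alpha\equiv p_G\not\equiv 0$; for $i\notin G$, $\alpha\equiv p_G\pmod{p_iA}$ with $p_G\notin p_iA$ by distinctness; for $i\in G$, $\alpha\equiv x^tp_{G^c}\tau\pmod{p_iA}$, and each of $x$, $p_{G^c}$, and $\tau$ lies outside $p_iR^*\supseteq p_iA$. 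The key structural point is $\tau\notin p_iR^*$: writing $\tau=x^su$ with $s$ the $x$-order of $\tau$ and $u$ having nonzero constant term $c_s\in k^\times$, $u$ is a unit of $R^*=k[y]_{(y)}[[x]]$, so $\tau R^*=x^sR^*$, and $\tau\in p_iR^*$ would force $x\in p_iR^*$. Hence $x\notin\mathfrak p^*$ and $p=\prod p_i\notin\mathfrak p^*$, so Proposition~\ref{16.5.17}.2 gives $A_{\mathfrak p^*}=B_{\mathfrak p^*\cap B}$, making $\mathfrak p^*\cap B$ a height-one prime of $B$; containing the height-one prime $wB$, it equals $wB$, as required.
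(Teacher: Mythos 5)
Your proof is correct, but it takes a genuinely different route from the paper's on the lost/not-lost verifications. For item 1 the paper computes the contraction fully: since $1 + x^t p_{G^c}\tau$ is a unit of $A$, one has $(p_G+x^tf)A\cap B = p_GA\cap B = \bigcap_{i\in G}Q_i$, and this cannot equal the height-one prime $(p_G+x^tf)B$ (it has height two, and is not even prime when $|G|>1$), so the prime is lost. You instead exhibit a single separating element: $p_G$ lies in $(p_G+x^tf)A\cap B$ but not in $(p_G+x^tf)B$, the latter via a height argument (showing $(p_G,f)B$ has height two) that ultimately routes through $f\notin p_iB$ and Proposition~\ref{16.5.17}.2; this is more work than the paper's one-line height comparison but is sound. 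For item 2, the paper's proof is quite terse — it computes $(p_G^2+x^tf)A\cap B \subsetneq \bigcap_{i\in G}Q_i$ and asserts not-lost — whereas you make the underlying mechanism fully explicit: pick a prime factor $\beta$ of $\alpha := p_G+x^tp_{G^c}\tau$ in the UFD $A$, check $\beta A \ne xA, p_iA$ by reducing $\alpha$ modulo each (using that $\tau$ is an associate of a power of $x$ in $R^*$, hence $\tau\notin p_iR^*$), and then apply Proposition~\ref{16.5.17}.2 to get $A_{\beta A} = B_{\beta A\cap B}$, forcing $\beta A\cap B = (p_G^2+x^tf)B$. This fills a real gap in the paper's exposition for item 2, at the cost of a somewhat heavier argument in item 1, where the paper's direct contraction computation is slicker. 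The primality, infinitude, and the membership criterion in the $Q_i$ are handled the same way as the paper (via Lemma~\ref{prime} and arithmetic with the $p_iR^*$), just organized slightly differently.
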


\begin{proof}
For item 1, we have
\begin{equation*}
(p_G + x^tf)A \cap B = p_G(1 + x^t\tau \prod_{j \notin G}p_j)A \cap B =
p_GA \cap B  = \bigcap_{i \in G}
Q_i. \tag{\ref{lost}.1}
\end{equation*}
Thus each prime ideal of $B$ of the form $(p_G + x^tf)B$ is lost in $A$ and $R^*$.
 By the second item of Lemma~\ref{prime},  there exist
infinitely many height-one primes $(p_G + x^tf)B$
of $B$
that are lost in  $A$ and $R^*$.

For item 2, we have
\begin{equation*} \aligned
(p_G^2 + x^tf)A \cap B & = (p_G^2 +x^tp_G(\prod_{j \notin G}p_j)\tau)A  \cap B\\ &
 =  p_G(p_G + x^t(\prod_{j \notin G}p_j) \tau)A \cap B \subsetneq p_GA \cap B = \bigcap_{i \in G}Q_i.
\endaligned \tag{\ref{lost}.2}
\end{equation*}
The strict inclusion is because $p_G + x^t (\prod_{j \notin G}p_j) \tau \in \m_A$.
This implies that prime ideals of $B$
of form $(p_G^2 + x^tf)B$ are not lost.
By Lemma~\ref{prime} there are infinitely many distinct prime ideals of that form.

The ``moreover'' statement  for  the prime ideals in item 1 follows from Equation~\ref{lost}.1.
Equation~\ref{lost}.2 implies that  the prime ideals in item 2 are contained
in each $Q_i \in H$. For $j \notin G$, if $p_G^2 + x^tf \in Q_j$, then $p_j + x^tf \in Q_j$ implies
that $p_G^2  - p_j  \in Q_j$ by subtraction. Since $p_j \in Q_j$, this would imply that
$p_G^2 \in Q_j$, a contradiction. This completes the proof of Lemma~\ref{lost}.
\end{proof}

\begin{remark}  \label{16.555}   With the notation of Examples~\ref{16.5.1},   consider  the
birational inclusion $B \hookrightarrow A$ and
the faithfully flat map $A \hookrightarrow R^*$.   The  following statements hold concerning
the inclusion maps $R \hookrightarrow B \hookrightarrow A \hookrightarrow R^*$,  and the
associated maps  in
the opposite direction of their spectra.

\begin{enumerate}
\item  The map $\Spec R^* \to \Spec A$ is surjective, while the
maps $\Spec R^* \to \Spec B$ and $\Spec A \to \Spec B$ are
not surjective. All the induced maps to $\Spec R$ are surjective since
the map $\Spec R^* \to \Spec R$
is surjecive.

\item
By Lemma~\ref{lost}, each of the prime ideals $Q_i$ of $B$ contains infinitely many
height-one primes of $B$ that are the contraction of prime ideals of
$A$ and infinitely many that are not.

Since an ideal contained in a finite union of prime
ideals is contained in one of the prime ideals by   \cite[Prop. 1.11, page~8]{AM},
there are infinitely many non-associate prime elements of  the UFD
$B$ that are not contained in the union $\bigcup_{i=1}^m Q_i$. We observe
that for each prime element $q$ of $B$  with $q \notin \bigcup_{i=1}^m Q_i$ the ideal
 $qA$ is contained in a height-one prime $\q$ of $A$
and $\q \cap B$ is properly contained in $\m_B$ since $\m_A$ is the unique
prime ideal of $A$ lying over $\m_B$. Hence $\q \cap B = qB$. Thus each $qB$ is
contracted from $A$ and $R^*$.

In the four-dimensional example $B$ of
Theorem~\ref{16.5.4t},  each height-one prime of $B$ is
contracted from $R^*$, but there are infinitely many height-two primes of
$B$ that are lost in $R^*$, i.e., are not contracted from $R^*$; see Section~\ref{5}.

\item  Among the prime ideals of the domain $B$  of Example~\ref{16.5.1} that are
not contracted from
$A$ are the $p_iB$.  Since
$p_iA \cap B = Q_i$
properly contains $p_iB$, the prime ideal   $p_iB$ is lost in $A$.

\item Since $x$ and $y$ generate the maximal ideals of $B$ and $A$,
and since  $B$ is integrally closed, a version
of Zariski's Main Theorem \cite{Peskine}, \cite{Ev}, implies
that $A$ is not essentially
finitely generated as a $B$-algebra.
\end{enumerate}
\end{remark}

Using the information above, we display  below a picture of $\Spec(B)$ in the case $m=2$.
\vskip 10 pt

\setbox4=\vbox{\hbox{%
\hskip 20pt
     \rlap{\kern  2.050in\lower 0.166in\hbox to .3in{\hss $\m_B:=(x,y)B$\hss}}%
        \rlap{\kern  1.850in\lower .666in\hbox to .3in{\hss $Q_1$ \hss}}%
        \rlap{\kern  3.450in\lower .666in\hbox to .3in{\hss $Q_2$ \hss}}%

   \rlap{\kern  .250in\lower 1.266in\hbox to .3in{\hss $xB\in\boxed{\text{NOT Lost}}$\hss}}%
   \rlap{\kern  1.350in\lower 1.266in\hbox to .3in{\hss $\boxed{\text{NL}}$\hss}}%
   \rlap{\kern  1.750in\lower 1.266in\hbox to .3in{\hss $\boxed{\text{L}}$\hss}}%
  \rlap{\kern  2.450in\lower 1.266in\hbox to .3in{\hss $\boxed{\text{NL}}$\hss}}%
   \rlap{\kern  2.850in\lower 1.266in\hbox to .3in{\hss $\boxed{\text{L}}$\hss}}%
 \rlap{\kern  3.550in\lower 1.266in\hbox to .3in{\hss $\boxed{\text{NL}}$\hss}}%
   \rlap{\kern  3.950in\lower 1.266in\hbox to .3in{\hss $\boxed{\text{L}}$\hss}}%

  % \rlap{\kern  2.850in\lower 1.266in\hbox to .3in{\hss $yB\in\boxed{\text{Type   III}}$\hss}}%
\rlap{\kern  2.050in\lower 1.666in\hbox to .3in{\hss $(0)$\hss}}%
     \rlap{\special{pa 2100 180}    \special{pa 350 1100}    \special{fp}}%
\rlap{\special{pa 2100 180}    \special{pa 1900 550}    \special{fp}}%
\rlap{\special{pa 2100 180}    \special{pa 3400 550}    \special{fp}}%
     \rlap{\special{pa 1850 750}    \special{pa 1500 1100}    \special{fp}}%
   \rlap{\special{pa 1850 750}    \special{pa 1850 1100}    \special{fp}}%
    \rlap{\special{pa 1850 750}    \special{pa 2550 1100}    \special{fp}}%
    \rlap{\special{pa  1850 750}    \special{pa 2900 1100}    \special{fp}}%
     \rlap{\special{pa 3450 750}    \special{pa 2550 1100}    \special{fp}}%
    \rlap{\special{pa  3450 750}    \special{pa 2900 1100}    \special{fp}}%
     \rlap{\special{pa 3450 750}    \special{pa 4050 1100}    \special{fp}}%
    \rlap{\special{pa  3450 750}    \special{pa 3600 1100}    \special{fp}}%
  \rlap{\special{pa 350 1360}    \special{pa 2100 1570}    \special{fp}}%
    \rlap{\special{pa 1450 1360}    \special{pa 2100 1570}    \special{fp}}%
     \rlap{\special{pa 1850 1360}    \special{pa 2100 1570}    \special{fp}}%
    \rlap{\special{pa 2550 1360}    \special{pa 2100 1570}    \special{fp}}%
     \rlap{\special{pa 3450 1360}    \special{pa 2100 1570}    \special{fp}}%
    \rlap{\special{pa 4050 1360}    \special{pa 2100 1570}    \special{fp}}%
        \rlap{\special{pa 2850 1360}    \special{pa 2100 1570}    \special{fp}}%
   }} \box4 \vskip 10 pt

\centerline{Diagram~\ref{16.555}.0}
\noindent
{\bf Comments on Diagram~\ref{16.555}.0.}
Here we have $Q_1 = p_1R^* \cap B$ and $Q_2 = p_2R^* \cap B$, and each box
represents an infinite set of height-one prime ideals. We label a box ``NL'' for
``not lost'' and ``L'' for ``lost''. An argument similar to that given for the Type   I
primes in
Example~\ref{16.5.3de}  shows that  the height-one primes $q$
such that $q \notin Q_1 \cup Q_2$ are not lost. That the other boxes are infinite follows
from Lemma~\ref{lost}.

\section{A 4-dimensional prime spectrum}  \label{5}

In Example~\ref{16.5.4de}, we present  a 4-dimensional example analogous to
Example~\ref{16.5.3de}.

\begin{example}\label{16.5.4de} Let $k$ be a field, let $x$, $y$  and $z$ be indeterminates
over $k$.   Set
$$
R : ~~=~~  k[x, y,z]_{(x,y,z)}  \qquad \text{and} \qquad R^*:~~=~~k[y, z]_{(y,z)}[[x]],
$$
and let $\m_R$ and $\m_{R^*}$ denote the maximal ideals of $R$ and $R^*$, respectively.
The power series ring   $R^*$ is  the $xR$-adic completion of $R$.  Consider  $\tau$  and $\sigma$  in
$xk[[x]]$
$$
\tau ~:=~ \sum_{n=1}^\infty c_nx^n \quad \text{and} \quad \sigma~:= ~ \sum_{n=1}^\infty d_nx^n,
$$
where the $c_n$ and $d_n$ are in $k$ and $\tau$ and $\sigma$ are
algebraically independent  over $k(x)$.  Define
$$
f~ :=~ y\tau + z\sigma \quad \text{and} \quad  A~:=~ A_f = R^* \cap k(x,y,z,f),
$$
that is, $A$ is the intersection domain associated with $f$.
For each integer $n \ge 0$, let $\tau_n$ and $\sigma_n$ be the $n^{\text{th}}$
endpieces of $\tau$ and $\sigma$ as in Equation~\ref{16.5.1}.a. Then the
$n^{\text{th}}$ endpiece of $f$ is  $f_n = y\tau_n + z\sigma_n$.
As in  Equation~\ref{16.5.1}.b, we have
$$
\tau_n ~=~x\tau_{n+1}~+~ xc_{n + 1} \quad \text{ and } \quad \sigma_n ~=~ x\sigma_{n+1}~+~xd_{n + 1},
$$
where $c_{n+1}$ and $d_{n+1}$ are in the field  $k$. Therefore
\begin{equation*}
\aligned
f_n ~ =~ y\tau_n~+~ z\sigma_n ~&=~ yx\tau_{n+1}+yxc_{n+1}  ~+~zx\sigma_{n+1} ~+~  zxd_{n+1} \\
      &=~ xf_{n+1} ~+yxc_{n+1} ~+~zxd_{n+1}.
\endaligned
\tag{\ref{16.5.4de}.1}
\end{equation*}
The approximation domains $U_n$, $B_n$, $U$  and $B$ for $A$ are as follows:
$$\aligned
\text{For } n~\ge ~0, \quad
U_n ~&:= ~k[x,y,z,f_n] \quad &B_n ~:= ~k[x,y,z,f_n]_{(x,y,z,f_n)} \quad \\
U~&:=~  \bigcup_{n=0}^\infty U_n \quad \text{and} \quad &B~:=~ B_f ~=~ \bigcup_{n=0}^\infty B_n.
\endaligned
$$
Thus $B$  is  the directed union of  4-dimensional localized polynomial rings.  It follows that
$\dim B \le 4$.

The rings $A$ and $B$ are constructed inside the
  intersection domain $A_{\tau, \sigma}:= R^* \cap k(x,y,z,\tau,\sigma)$.
By \cite[Prop. 4.1]{noehom} or \cite[Theorem~9.2]{powerbook},  
the domain $A_{\tau, \sigma}$ is Noetherian and equals  its
 approximation domain $B_{\tau, \sigma}$.  Here $B_{\tau, \sigma}$ is the
nested union of the regular local domains 
$B_{\tau,\sigma, n} = k[x,y,z, \tau_n, \sigma_n]_{(x,y,z, \tau_n, \sigma_n)}$. 
By Theorem~\ref{build}, the extension $T := R[\tau, \sigma] \hookrightarrow R^*[1/x]$ is
flat. It follows that $A_{\tau, \sigma}$ is a
3-dimensional RLR that is a directed union of 5-dimensional RLRs.

\end{example}

Before we list and establish the other properties of Example~\ref{16.5.4de}
in Theorem~\ref{16.5.4t}, we discuss the Jacobian ideal of a map and its
relation to flatness.

\medskip
\begin{discussion} \label{flatpoly}
Let $R$ be a Noetherian ring, let  $m$ and $n$ be positive integers,  let
$z_1, \ldots, z_n$ be indeterminates over $R$ and let $f_1,\dots,f_m$ be
polynomials in $R[z_1, \ldots, z_n]$  that are algebraically
independent over $R$. Let
\begin{equation*}
S ~~:= ~~
R[f_1, \ldots, f_m] ~~~ \overset{\varphi}\hookrightarrow ~~~ R[z_1, \ldots, z_n]~~ =:~~ T,
\tag{\ref{flatpoly}.0}
\end{equation*}
be the inclusion map.

We define
the {\it Jacobian ideal} $J$  of the extension $S \hookrightarrow T$
to be the ideal  of $T$ generated by the
$m \times m$ minors of the $m \times n$ matrix $
\mathcal J$ defined as follows:
$$
\mathcal J :=\left( \frac{\partial f_i}{\partial z_j}\right)_{1 \le i \le m,\,\, 1 \le j \le n}.
$$

For the extension $\varphi : S \hookrightarrow T$,
the {\it nonflat locus } of $\varphi$ is  the set $\mathcal F$, where
$$
\mathcal F  :=  \{Q \in \Spec(T)~~|~~ \text{the map} \quad \varphi_Q: S \to T_Q
\text{ is not flat } \}.
$$
The nonflat locus of $\varphi$ is a closed subset of $\Spec T$, \cite[Theorem~24.3]{M}.
We say that  an ideal $F$ of $T$  {\it defines the nonflat locus} of $\varphi$
if $F$ is such that for every $Q\in\Spec(T)$, we have $F\subseteq Q  \iff $
 the associated
map $\varphi_Q: S_{Q\cap S}\to T_Q$ is not flat.

\end{discussion}

\begin{proposition} \label{17.45p} $\phantom{i}$  \cite[Propositions~2.4.2, 2.7.2]{fez97}
With  notation as in  Discussion~\ref{flatpoly},  let  $Q \in \Spec T$ and
consider $\varphi_Q : S \to   T_Q $. Then
\begin{enumerate}
\item
$\varphi_Q $ is flat if and only if,  for
each prime ideal $ P \subseteq Q$ of $T$,
we have  $\text{ht}(P)\ge \text{ht}(P\cap S)$.
\item If $Q$ does not contain $J$, then $\varphi_Q $ is flat. Thus $J\subseteq F$.
\end{enumerate}
\end{proposition}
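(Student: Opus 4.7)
My strategy is to prove part~(1) via the standard dimension formula for flat local maps together with the local criterion of flatness, and part~(2) via the Jacobian criterion for smoothness. The forward direction of part~(1) is quick: if $\varphi_Q$ is flat and $P\subseteq Q$ is a prime of $T$, then the further localization $\varphi_P : S_{P\cap S}\to T_P$ is also flat, and the standard dimension formula for flat local homomorphisms of Noetherian local rings yields
\[
\hgt(P) ~=~ \hgt(P\cap S) ~+~ \dim\bigl(T_P/(P\cap S)T_P\bigr) ~\ge~ \hgt(P\cap S).
\]

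For the reverse direction of part~(1), I plan to present $T$ as a quotient of a polynomial $S$-algebra and reformulate flatness as a regular-sequence condition. Set $W:=S[Z_1,\dots,Z_n]$, with the surjection $\pi:W\to T$ defined by $\pi(Z_j)=z_j$; its kernel is $I:=\bigl(f_1-f_1(Z),\dots,f_m-f_m(Z)\bigr)W$. Since $S\hookrightarrow W$ is a polynomial extension and hence flat, the local criterion of flatness shows that $\varphi_Q$ is flat if and only if $I$ is generated by a $W_{\widetilde Q}$-regular sequence, where $\widetilde Q:=\pi^{-1}(Q)$. Because $W_{\widetilde Q}$ is a localization of a polynomial ring over $R$, the requisite Cohen--Macaulay behavior reduces this regularity to the height equality $\hgt(IW_{\widetilde Q}) = m$, equivalently $\hgt(\widetilde Q)=\hgt(Q)+m$. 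Using the bijection between primes of $T$ and primes of $W$ containing $I$, the numerical hypothesis $\hgt(P)\ge\hgt(P\cap S)$ for all $P\subseteq Q$ is precisely what is needed to enforce this height equality at every relevant prime.

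For part~(2), suppose $J\not\subseteq Q$. Then some $m\times m$ minor of $\mathcal{J}$ avoids $Q$; after relabelling the $z_j$, we may assume that the determinant of $(\partial f_i/\partial z_j)_{1\le i,j\le m}$ is a unit in $T_Q$. Introduce the intermediate polynomial extension $S':=S[z_{m+1},\dots,z_n]$ and the presentation $T\cong S'[Z_1,\dots,Z_m]/\bigl(f_i(Z_1,\dots,Z_m,z_{m+1},\dots,z_n)-f_i\bigr)_{i=1}^m$. The Jacobian of these $m$ relations with respect to $Z_1,\dots,Z_m$ reduces, modulo the defining ideal, to the minor we singled out, and is therefore a unit at $Q$. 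By the Jacobian criterion for smoothness, $T_Q$ is \'etale over $S'$ at $Q$, hence flat; composing with the flat polynomial extension $S\hookrightarrow S'$ gives flatness of $\varphi_Q$, proving $J\subseteq F$.

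The main obstacle is the reverse direction of part~(1): one must convert the purely numerical height inequality into the Koszul-theoretic regular-sequence statement for $I$ at $\widetilde Q$, and this requires careful bookkeeping of the correspondence between $\Spec T$ and the primes of $W$ containing $I$, together with an appeal to the Cohen--Macaulay behavior of the polynomial extension $S\hookrightarrow W$. The Jacobian argument for part~(2) is by contrast fairly clean once the presentation $T\cong S'[Z_1,\dots,Z_m]/(\dots)$ is in place.
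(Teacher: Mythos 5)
The paper does not prove Proposition~\ref{17.45p}; it is cited from \cite[Propositions~2.4.2, 2.7.2]{fez97}, so there is no ``paper's own proof'' to compare against. Judged on its own terms, your forward implication in part~(1) is fine, and the Jacobian/\'etale strategy for part~(2) is sound in outline (though the passage through $S'=S[z_{m+1},\dots,z_n]$ implicitly asserts that $f_1,\dots,f_m,z_{m+1},\dots,z_n$ are algebraically independent over $R$, which needs justification; one can avoid this entirely by observing that after permuting the $Z_j$ the presentation $T=S[Z_1,\dots,Z_n]/(f_i - f_i(Z))$ is ``standard smooth'' at $Q$ in the sense of the Jacobian criterion, and standard smooth implies flat).

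The reverse implication of part~(1), however, has a genuine error. You assert that ``the local criterion of flatness shows that $\varphi_Q$ is flat if and only if $I$ is generated by a $W_{\widetilde Q}$-regular sequence.'' This is false. The correct criterion is that $g_1,\dots,g_m$ (with $g_i=f_i-f_i(Z)$) be a regular sequence on the \emph{fiber} $W_{\widetilde Q}/(Q\cap S)W_{\widetilde Q}$, not on $W_{\widetilde Q}$ itself. A concrete counterexample: take $R=k$, $n=m=2$, $f_1=z_1$, $f_2=z_1z_2$, $Q=(z_1,z_2)T$. Then $g_1=f_1-Z_1$ and $g_2=f_2-Z_1Z_2$ do form a $W_{\widetilde Q}$-regular sequence, yet $\varphi_Q$ is not flat (indeed $\hgt(z_1)T=1<2=\hgt((z_1)T\cap S)$, violating going-down); on the fiber $k[Z_1,Z_2]_{(Z_1,Z_2)}$ the images of $g_1,g_2$ are $-Z_1$ and $-Z_1Z_2$, which are not a regular sequence. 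Compounding the error, you invoke Cohen--Macaulayness of $W_{\widetilde Q}$, but $R$ is only assumed Noetherian, so $W_{\widetilde Q}$ need not be CM; notably the fiber $W_{\widetilde Q}/(Q\cap S)W_{\widetilde Q}$, being a localization of a polynomial ring over the field $\kappa(Q\cap S)$, \emph{is} CM, which is exactly why the correct form of the criterion reduces to a height computation in the fiber. Finally, you explicitly defer the bookkeeping translating the hypothesis $\hgt(P)\ge\hgt(P\cap S)$ for all $P\subseteq Q$ into the regular-sequence condition, so even modulo the above errors the argument is incomplete.
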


We use the following proposition concerning flatness to justify Example~\ref{16.5.4de}.

\begin{proposition} \label{nfl} With the notation of Example~\ref{16.5.4de}, we have
\begin{enumerate}
\item \label{yzJ} 
For the extension $\varphi: S=R[f]\hookrightarrow T=R[\tau,\sigma]$,  
the Jacobian ideal $J$ is the ideal $(y,z)T$.  Thus the nonflat locus $F$ of $\varphi$ contains $J$.

\item  For every  $P\in\Spec(R^*[1/x])$, the ideal  $(y,z)R^*[1/x] \nsubseteq P\iff$
the map $B_{P\cap B}\hookrightarrow (R^*[1/x])_P$ is flat.
Thus the ideal $(y,z)R^*[1/x]$ defines the nonflat locus of
the map $B\hookrightarrow R^*[1/x]$.
\item For every height-one prime ideal $\p$ of $R^*$, we have  $\hgt(\p\cap B) \le 1$.
\item For every prime element $w$ of $B$, $wR^*\cap B=wB$.
\end{enumerate}
\end{proposition}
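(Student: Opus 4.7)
Plan: We prove (1)--(4) in order. For item~(1), since $f = y\tau + z\sigma$, the Jacobian matrix of $\varphi: S \hookrightarrow T$ is the $1 \times 2$ matrix $(\partial f/\partial \tau,\, \partial f/\partial \sigma) = (y, z)$, whose $1 \times 1$ minors generate $J = (y,z)T$; the inclusion $J \subseteq F$ is then immediate from Proposition~\ref{17.45p}(2).

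For item~(2), I factor the inclusion $B \hookrightarrow R^*[1/x]$ through $S = R[f] \hookrightarrow T = R[\tau,\sigma] \hookrightarrow R^*[1/x]$. Because $A_{\tau,\sigma}$ is Noetherian by the result cited in Example~\ref{16.5.4de}, Theorem~\ref{build} gives that $T \hookrightarrow R^*[1/x]$ is flat. For $P \in \Spec R^*[1/x]$, set $Q := P \cap T$; then $T_Q \to R^*[1/x]_P$ is flat and local, hence faithfully flat. By Remark~\ref{16.5.11}, $B[1/x]$ is a localization of $S$, and because $x \notin P \cap B$, we have $B_{P \cap B} = S_{P \cap S}$. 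By faithfully flat descent, $S_{P \cap S} \to R^*[1/x]_P$ is flat iff $S_{P \cap S} \to T_Q$ is flat. If $(y,z) \not\subseteq Q$, item~(1) and Proposition~\ref{17.45p}(2) give flatness. Conversely, if $(y,z) \subseteq Q$, then $f = y\tau + z\sigma \in (y,z)T$, so $P' := (y,z)T = (y,z,f)T$ is a height-$2$ prime of $T$ (since $T/(y,z)T \cong k[x]_{(x)}[\tau,\sigma]$ has dimension three), while $P' \cap S \supseteq (y,z,f)S$, which is a height-$3$ prime of $S$ (since $S = R[f]$ is a polynomial ring over $R$ in one variable, and $S/(y,z,f)S \cong k[x]_{(x)}$). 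Proposition~\ref{17.45p}(1) then forces nonflatness at $P' \subseteq Q$. Since $(y,z)T \subseteq Q$ is equivalent to $(y,z)R^*[1/x] \subseteq P$, the claim follows.

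For item~(3), let $\mathfrak p \in \Spec R^*$ have height one. If $x \in \mathfrak p$, then $\mathfrak p = xR^*$ and Equation~\ref{16.5.15}.0 gives $\mathfrak p \cap B = xB$, of height one. Otherwise $\mathfrak p$ corresponds to a height-one prime $\mathfrak p R^*[1/x]$ of $R^*[1/x]$; since $R^*/(y,z)R^* \cong k[[x]]$ is one-dimensional, $(y,z)R^*$ has height two and so $(y,z) \not\subseteq \mathfrak p$. By item~(2), $B_{\mathfrak p \cap B} \to R^*[1/x]_{\mathfrak p R^*[1/x]} = R^*_{\mathfrak p}$ is flat and local, so the going-down property yields $\hgt(\mathfrak p \cap B) \le \hgt(\mathfrak p R^*[1/x]) = 1$.

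Finally, for item~(4), let $w$ be a prime element of $B$; the inclusion $wB \subseteq wR^* \cap B$ is clear. Pick a minimal prime $\mathfrak p$ of $wR^*$; by Krull's Hauptidealsatz, $\hgt(\mathfrak p) = 1$. Then $w \in \mathfrak p \cap B$ with $w \ne 0$, so item~(3) forces $\hgt(\mathfrak p \cap B) = 1$; since $B$ is a UFD (Proposition~\ref{Bufd}) and $wB \subseteq \mathfrak p \cap B$ are both height-one primes, $\mathfrak p \cap B = wB$. Thus $B_{wB} \subseteq R^*_{\mathfrak p}$ is an inclusion of DVRs; for $a \in wR^* \cap B$ we have $v_{\mathfrak p}(a) \ge 1$, and if $a \notin wB$ then $a$ would be a unit in $B_{wB} \subseteq R^*_{\mathfrak p}$, contradicting $v_{\mathfrak p}(a) \ge 1$. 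Hence $a \in wB$, giving equality. The main obstacle is item~(2): it requires not only the sufficient criterion of Proposition~\ref{17.45p}(2) but also its partial converse via the height computations in $S$ and $T$ above, combined with faithfully flat descent along $T \to R^*[1/x]$ to transfer (non)flatness back from $R^*[1/x]$ to $T$.
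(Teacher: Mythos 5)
Your proof is correct and follows essentially the same route as the paper: compute $J=(y,z)T$ from the $1\times 2$ Jacobian, factor $B\hookrightarrow R^*[1/x]$ through $S\hookrightarrow T\hookrightarrow R^*[1/x]$ using flatness of $T\to R^*[1/x]$ and faithfully flat descent, and detect nonflatness over $(y,z)$ via the height jump from $\hgt (y,z)T=2$ to $\hgt (y,z,f)S=3$ using Proposition~\ref{17.45p}(1). The only differences are cosmetic: you are more explicit about the descent step and the $x\in\p$ case of item~3, and your item~4 finishes with a DVR/valuation argument where the paper simply observes that $wB\subseteq wR^*\cap B\subseteq \p\cap B=wB$ forces equality directly.
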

\begin{proof}  For item 1, 
 the Jacobian ideal is the ideal of $T$ 
generated by the $1\times 1$ minors of the matrix $(y\phantom{X} z)$ by Discussion~\ref{flatpoly}, and so $J=(y,z)T$. 
By Proposition~\ref{17.45p}.2, $(y,z)T\subseteq F$.

The two statements of item~2 are equivalent by the
definition of nonflat locus in Discussion~\ref{flatpoly}. To compute the nonflat locus of $B \hookrightarrow R^*[1/x]$,
we use that $T: = R[\tau, \sigma] \hookrightarrow R^*[1/x]$ is flat as
noted in Example~\ref{16.5.4de}.
 Let  $P \in \Spec(R^*[1/x])$  and let $Q := P \cap T$.
The map  $B  \hookrightarrow  R^*[1/x]_P$ is flat $\iff$ the composition
$$\aligned k[x,y,z,f]&\hookrightarrow k[x,y,z, \tau, \sigma]\hookrightarrow
R^*[1/x]_P \text{ is flat}\iff \\
S:=&k[x,y,z,f]\overset{\varphi}{\hookrightarrow} T_Q =k[x,y,z,\tau, \sigma]_Q \text{ is flat}.
\endaligned
$$
From item~1,  the Jacobian ideal of the extension $S \hookrightarrow T$ is the ideal    $J=(y,z)T$.
Since $(y,z) T\cap S=(y,z,f)S$ has height 3, $\varphi_Q$ is not flat
for every $Q\in\Spec(T)$ such that $(y,z)T\subseteq Q$.
Thus the nonflat locus of $B\hookrightarrow R^*[1/x]$ is $(y,z)R^*[1/x]$ as stated in item~2.

For item 3, let $\p$ be a height-one prime of $R^*$.
 Since $\p$ does not  contain $(y,z)R^*$, the map  $B_{\p\cap B}\hookrightarrow (R^*)_{\p}$
is faithfully flat.
 Thus $\hgt(\p\cap B)\le 1$.  This establishes  item~3.

Item~4 is clear if $wB = xB$.   Assume that
 $wB \ne xB$ and let    $\p$ be a height-one prime ideal of $R^*$ that contains $wR^*$.  Then
$\p R^*[1/x] \cap R^* = \p$,
 and by item~3, $\p \cap B$ has  height  at most one.  We have
$\p \cap B\supseteq wR^*\cap B\supseteq wB$. Thus item~4 follows.
\end{proof}

Next  we prove a proposition about homomorphic
images of the constructed ring $B$. This result  enables us in Corollary~\ref{homimB-z}
to relate the ring $B$ of
Example~\ref{16.5.4de} to the ring $B$ of Example~\ref{16.5.3de}.

\begin{proposition}\label{homimB} Assume the notation of Example~\ref{16.5.4de}, and
let $w$ be a prime element of $R=k[x,y,z]_{(x,y,z)}$ with $wR\ne xR$.
Let $\pi: R^*\to R^*/wR^*$ be the natural
homomorphism, and let $\overline{\phantom{x}}$ denote  image  in $R^*/wR^*$.
 Let $B'$ be the approximation
domain formed
by considering $\overline R$   and the endpieces $\overline f_n$  of $\overline f$,
 defined analogously to Equation~\ref{16.5.1}.a.
That is, $B'$ is defined  by setting
 $$U'_n ~= ~\overline R[\overline f_n],~ B'_n~ = ~({U_n'})_{\n'_n}, ~ U'~=  ~\bigcup_{n=1}^\infty
U'_n,  ~~\text{and } ~~ B'~ = ~\bigcup_{n=1}^\infty B'_n,$$
 where $\n'_n$ is the maximal ideal of $U'_n$ that contains $\overline f_n$ and the image of $\m_R$.
Then $B'= \overline B$.
\end{proposition}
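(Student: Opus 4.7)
The plan is to establish the equality at each finite level and then pass to a directed union: I will show $\pi(B_n) = B'_n$ for every $n \ge 0$, which immediately yields
\[
\overline{B} ~=~ \pi\Bigl(\bigcup_{n=0}^\infty B_n\Bigr) ~=~ \bigcup_{n=0}^\infty \pi(B_n) ~=~ \bigcup_{n=0}^\infty B'_n ~=~ B'.
\]

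First observe that $\pi(U_n) = \pi(R[f_n]) = \overline{R}[\overline{f_n}] = U'_n$, so $\pi$ restricts to a surjection of polynomial rings $U_n \twoheadrightarrow U'_n$. The key technical step is to show that $\pi^{-1}(\n'_n) \cap U_n = \n_n$, which will imply that $\pi$ carries the multiplicative set $U_n \setminus \n_n$ onto $U'_n \setminus \n'_n$. Since $\n_n = (x,y,z,f_n)U_n$, the inclusion $\pi(\n_n) \subseteq \n'_n$ is immediate, so it suffices to prove $\ker(\pi|_{U_n}) = wR^* \cap U_n \subseteq \n_n$. Because $w \in \m_R \subseteq \m_{R^*} = (x,y,z)R^*$ we have $wR^* \subseteq (x,y,z)R^*$, and I claim $(x,y,z)R^* \cap U_n = \n_n$: the inclusion $\n_n \subseteq (x,y,z)R^* \cap U_n$ holds because $f_n = y\tau_n + z\sigma_n \in (y,z)R^*$, while the reverse inclusion holds because $\n_n$ is already a maximal ideal of the polynomial ring $U_n = k[x,y,z,f_n]$ whereas $(x,y,z)R^* \cap U_n$ is a proper prime.

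The identity $\pi(B_n) = B'_n$ now follows formally. Given $u/s \in B_n$ with $u \in U_n$ and $s \in U_n \setminus \n_n$, the image $\pi(u)/\pi(s)$ lies in $(U'_n)_{\n'_n} = B'_n$ since $\pi(s) \notin \n'_n$. Conversely, given any $\bar a/\bar s \in B'_n$ with $\bar s \notin \n'_n$, choose lifts $a, s \in U_n$ with $\pi(a) = \bar a$ and $\pi(s) = \bar s$; by the preceding step $s \notin \n_n$, so $a/s \in B_n$ and maps to $\bar a/\bar s$.

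The main obstacle is the containment $wR^* \cap U_n \subseteq \n_n$ verified above; once this is in hand, the rest is formal bookkeeping with localizations. A minor preliminary point worth recording is that $\overline{f_n} := \pi(f_n)$ really does coincide with the $n$th endpiece of $\overline{f}$ in the sense of Equation~(\ref{16.5.1}.a), so that $B'$ matches the setup of the statement; this requires $x$ to be a nonzerodivisor on $R^*/wR^*$, which follows because the hypothesis $wR \ne xR$ forces $w \notin xR^*$, whence the height-one prime $xR^*$ cannot be an associated prime of $wR^*$ in the UFD $R^*$.
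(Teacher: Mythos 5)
Your proof is correct, but it takes a genuinely different route from the paper's. The paper first establishes the stronger identification $\overline{B}=B/wB$ by invoking $wR^*\cap B = wB$ (Proposition~\ref{nfl}.3--4, which in turn rest on the Jacobian/nonflat-locus computation), then shows $wR^*\cap B_n = wB_n$ via $U_n[1/x]=U[1/x]$, concludes $\overline{B_n}=B_n/wB_n$, and identifies $B_n/wB_n$ with $B'_n$. You instead avoid identifying the kernel of $\pi|_{B}$ altogether: you compute the image directly, level by level, by showing $\pi(U_n)=U'_n$, then $\pi^{-1}(\n'_n)\cap U_n = \n_n$, and then transferring the localization. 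Your route is more elementary in that it bypasses the flatness machinery of Proposition~\ref{nfl} entirely; the key simplifying observation you exploit (implicitly) is that $\n_n$ is a \emph{maximal} ideal of $U_n$, so once $\pi(\n_n)\subseteq\n'_n$ is noted, the equality $\pi^{-1}(\n'_n)\cap U_n=\n_n$ is essentially automatic (the prime $\pi^{-1}(\n'_n)\cap U_n$ is a proper ideal containing the maximal $\n_n$, hence equals $\n_n$). In fact your auxiliary claim that $(x,y,z)R^*\cap U_n=\n_n$ — while correct, by exactly this maximality argument — is not strictly needed: you use it to show $\ker(\pi|_{U_n})\subseteq\n_n$, but that containment also follows for free once you have $\n_n\subseteq\pi^{-1}(\n'_n)\cap U_n\subsetneq U_n$. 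What the paper's longer route buys is the explicit fact $\ker(\pi|_B)=wB$, which is used elsewhere; your proof establishes only the equality $B'=\overline{B}$ that the proposition actually asks for. Two small notational points: be consistent about whether $U_n$ means $R[f_n]$ or $k[x,y,z,f_n]$ (you use both — the localizations $B_n$ agree, and the argument survives either convention, but $\pi(U_n)=U'_n=\overline{R}[\,\overline{f_n}\,]$ requires the first reading); and your final remark verifying that $x$ is a nonzerodivisor on $R^*/wR^*$ is a genuine point the paper handles more implicitly, so it is good that you record it, though the reason is really that $R^*$ is Cohen--Macaulay (so $R^*/wR^*$ has no embedded primes) rather than that $R^*$ is a UFD.
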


\begin{proof}
By Proposition~\ref{16.5.15}.3, $wB$ is a prime ideal of $B$. By Proposition~\ref{nfl}.3,
$wR^* \cap B = wB$. Hence $\overline B = B/(wR^* \cap B) = B/wB$.
 We have
$$
\overline{R}/x\overline{R} ~=~ \overline{B}/x\overline{B} ~=~ \overline{R^*}/x\overline{R^*},
$$
and the ring $\overline{R^*}$ is the $(\overline x)$-adic completion of $\overline R$.
Since the ideal $(y,z)R$ has height 2 and the kernel of $\pi$ has height~1,
at least one of $\overline y$ and $\overline z$ is  nonzero. Since
$\tau$ and $\sigma$ are algebraically independent
over $k(x,y,z)$, the element   $\overline f=\overline y\cdot\overline \tau+\overline z\cdot\overline \sigma$ of
the integral domain $\overline B$ is
transcendental over $\overline R$.
Similarly the endpieces  $\overline f_n$ are  transcendental over $\overline  R$.
The fact that $\overline R^*$
may fail to be an integral domain does not affect the algebraic  independence of these
elements  that are inside the integral domain $\overline B$.

By Proposition~\ref{16.5.15}.2 and Remarks~\ref{locBsame}.2, we have $U_n[1/x] = U[1/x]$, and thus $wU \cap U_n = wU_n$
for each $n \in \N$. Since $B_n$ is a localization of $U_n$, we also have
$wB \cap B_n = wB_n$. Since $wR^* \cap B = wB$, it follows that $wR^* \cap B_n = wB_n$.
Thus we have
$$
\overline R~\subseteq ~  \overline {B_n} ~=~ B_n/wB_n ~ \subseteq ~ ~
\overline B ~=~ B/wB ~ \subseteq ~ \overline{R^*} ~= ~R^*/wR^*.
$$
We conclude that $\overline B  = \bigcup_{n=0}^\infty \overline{B_n}$. Since  $B'_n=\overline B_n$,
we have  $B'=\overline B$.
\end{proof}

\begin{corollary}\label{homimB-z} The  homomorphic image $B/zB$ of the
 ring $B$ of
Example~\ref{16.5.4de} is isomorphic to the three-dimensional
ring $B$ of Example~\ref{16.5.3de}.
\end{corollary}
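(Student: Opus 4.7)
My plan is to obtain the corollary as a direct application of Proposition~\ref{homimB} with the choice $w = z$, followed by identifying the resulting approximation domain $B'$ with the ring of Example~\ref{16.5.3de}.

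First I would verify the hypotheses of Proposition~\ref{homimB}. Since $R = k[x,y,z]_{(x,y,z)}$ is a regular local ring and $z$ is part of a regular system of parameters, $z$ is a prime element of $R$, and clearly $zR \ne xR$. Hence Proposition~\ref{homimB} applies and yields $B/zB \cong B'$, where $B'$ is the approximation domain built from $\overline{R} = R/zR$ and the endpieces $\overline{f}_n$ inside $\overline{R^*} = R^*/zR^*$.

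Next I would make the identifications explicit. Since $z$ is in the maximal ideal of $R$, the natural map $k[x,y,z]_{(x,y,z)}/zk[x,y,z]_{(x,y,z)} \to k[x,y]_{(x,y)}$ is an isomorphism, so $\overline{R}$ is precisely the base ring $R$ of Example~\ref{16.5.3de}. Also, $\overline{R^*} = R^*/zR^* \cong k[y]_{(y)}[[x]]$, and the images $\overline{\tau}, \overline{\sigma}$ are just $\tau, \sigma \in xk[[x]]$, still algebraically independent over $k(x)$. Computing
\[
\overline{f}_n ~=~ \overline{y\tau_n + z\sigma_n} ~=~ y\tau_n
\]
in $\overline{R^*}$, I see that $\overline{f}_n$ coincides with the endpiece $f_n = y\tau_n$ of Example~\ref{16.5.3de}, and likewise the recursion (\ref{16.5.4de}.1) modulo $z$ becomes exactly the recursion (\ref{16.5.3de}.1) for $y\tau_n$. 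Therefore the rings
\[
B'_n ~=~ k[x,y,\overline{f}_n]_{(x,y,\overline{f}_n)} ~=~ k[x,y,y\tau_n]_{(x,y,y\tau_n)}
\]
match the rings $B_n$ of Example~\ref{16.5.3de}, and passing to the nested union gives $B' = \bigcup_n B'_n$ equal to the three-dimensional ring of Example~\ref{16.5.3de}. Combining with Proposition~\ref{homimB} gives $B/zB \cong B'$, as asserted.

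The only step that requires a bit of care is checking that $\overline{f}_n$ is transcendental over $\overline{R}$ (so that the approximation machinery of Example~\ref{16.5.3de} genuinely applies to it), but this is handled inside Proposition~\ref{homimB}: since the ideal $(y,z)R$ has height two, the image $\overline{y}$ is nonzero in $\overline{R}$, and algebraic independence of $\tau$ over $k(x,y)$ then forces $y\tau_n = \overline{f}_n$ to be transcendental over $\overline{R}$, inside the integral domain $\overline{B} \subseteq \overline{R^*}$. No other substantive work is needed.
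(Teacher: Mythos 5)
Your proposal is correct and follows essentially the same route as the paper: invoke Proposition~\ref{homimB} with $w = z$ to get $B/zB = B'$, then identify $\overline{R} \cong k[x,y]_{(x,y)}$, $\overline{R^*} \cong k[y]_{(y)}[[x]]$, and $\overline{f}_n = y\tau_n$, so that $B'$ is precisely the ring of Example~\ref{16.5.3de}. The only cosmetic difference is that the paper makes the identification a notch more formal, explicitly defining the $k$-isomorphism $\psi_0\colon \overline{R} \to k[x,y]_{(x,y)}$ and extending it to $\psi\colon (\overline{R})^* \to k[y]_{(y)}[[x]]$ with $\psi(\overline{\tau}) = \tau$, rather than treating the residue classes as literally equal to $\tau$; your version elides this harmlessly.
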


\begin{proof}
Assume the notation of Example~\ref{16.5.4de} and Proposition~\ref{homimB} and let $w=z$.
We show that the ring $B/zB\cong C$, where $C$ is the ring called $B$ in Example~\ref{16.5.3de}.
 By Proposition\ref{homimB}, we have  $B'= B/zB$, where $B'$ is the approximation domain
over $\overline R=R/zR$ using the element $\overline f$, transcendental over $\overline R$.
 Let $R_C$ denote
the base ring  $k[x,y]_{(x,y)}$
for $C$ in Example~\ref{16.5.3de}, and let $\psi_0: \overline R \to R_C$
denote the $k$-isomorphism defined by $\overline x \mapsto x$ and $\overline y
\mapsto y$.
Then $\psi_0$ extends to an isomorphism $\psi:(\overline R)^*\to (R_C)^*$
that agrees with $\psi_0$ on $\overline R$ and such that $\psi(\overline \tau)=\tau$.
Furthermore $\psi(\overline f)=\psi(\overline y\cdot\overline \tau+\overline z\cdot\overline
\sigma)=
y\tau$, which is the transcendental element $f$
used in the construction of $C$.
Thus $\psi$ is an isomorphism from $\overline B=B/zB$ to $C$,
the ring constructed in Example~\ref{16.5.3de}.
\end{proof}

In the proof of Theorem~\ref{16.5.4t}, we use the following proposition 
regarding a birational extension of a Krull domain.

\begin{proposition} \label{6.2.9fp}   Let  $S \hookrightarrow T$ be a birational extension
of commutative rings, where $S$ is a Krull domain  and each height-one prime 
ideal of $S$  is 
contracted from $T$. Then $S=T$.
\end{proposition}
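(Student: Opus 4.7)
The plan is to exploit the Krull representation $S = \bigcap_{\hgt \p = 1} S_\p$ and the fact that the DVRs $S_\p$ are maximal proper subrings of the common fraction field. Let $K := \mathrm{Frac}(S)$, which equals $\mathrm{Frac}(T)$ because the extension $S \hookrightarrow T$ is birational. It suffices to show that every $t \in T$ lies in each $S_\p$, as $\p$ ranges over the height-one primes of $S$.

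Fix a height-one prime $\p$ of $S$ and, using the hypothesis, choose $Q \in \Spec T$ with $Q \cap S = \p$. The multiplicative set $S \setminus \p$ is contained in $T \setminus Q$ (any $s \in S \setminus \p$ cannot lie in $Q$, else $s \in Q \cap S = \p$), so inside $K$ we have the chain of inclusions
\[
S_\p \;\subseteq\; T_{S \setminus \p} \;\subseteq\; T_Q \;\subseteq\; K.
\]
The first ring is a DVR, since $S$ is a Krull domain and $\hgt \p = 1$. Every overring of a DVR inside its field of fractions is either the DVR itself or the whole field, so $T_Q$ equals $S_\p$ or $K$.

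The key point (and the only substantive step) is to rule out $T_Q = K$. Since $\p$ has height one, $\p \ne (0)$, so $Q \ne (0)$; pick any nonzero $q \in Q$. Then $q$ lies in the maximal ideal $Q T_Q$ of the local ring $T_Q$, so $q$ is a nonunit of $T_Q$. But every nonzero element of $K$ is a unit in $K$, so $T_Q \ne K$. Therefore $T_Q = S_\p$, and consequently $t \in T \subseteq T_Q = S_\p$.

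Since $\p$ was an arbitrary height-one prime of $S$, we get
\[
t \;\in\; \bigcap_{\hgt \p = 1} S_\p \;=\; S,
\]
the last equality holding because $S$ is a Krull domain. Thus $T \subseteq S$, and combined with the given inclusion $S \subseteq T$, this yields $S = T$.
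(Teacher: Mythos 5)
Your proof is correct and follows essentially the same route as the paper's: both reduce to showing $T \subseteq S_\p$ for each height-one prime $\p$ of $S$, choose a prime $Q$ of $T$ with $Q \cap S = \p$, and exploit that $S_\p$ is a DVR to force $S_\p = T_Q$. The only cosmetic difference is the way you finish that last step: the paper invokes birational domination and the maximality of a valuation ring under domination, while you use the dichotomy that an overring of a DVR in its fraction field is the DVR or the whole field, ruling out the latter by exhibiting a nonzero nonunit of $T_Q$; these are two phrasings of the same DVR-maximality fact.
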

\begin{proof} Recall that $S$ is Krull implies that   $S=\cap \{S_\p\,|\, \p$ is a height-one prime ideal of $S\}$. We show that $T\subseteq S_\p$, for each height-one prime ideal of $S$. Since $\p$ is contracted from $T$, there exists a prime ideal $\q$ of $T$ such
that $\q\cap S=\p$. Then $S_\p\subseteq T_\q$ and $T_\q$ birationally dominates $S_\p$.  Since $S_\p$ is a DVR, we have $S_\p=T_\q$.  Therefore $T\subseteq S_\p$, for each $\p$.  It follows that  $T= S$.
\end{proof}

We record in Theorem~\ref{16.5.4t} properties of the ring $B$ and its prime spectrum.

\begin{theorem} \label{16.5.4t} As in Example~\ref{16.5.4de},  let
$R : =k[x, y,z]_{(x,y,z)}$ with $k$  a field, let $x$, $y$  and $z$ be indeterminates, and  
let $R^*:=k[y, z]_{(y,z)}[[x]]$, the $xR$-adic completion of $R$. 
 Let $\tau$ and $\sigma\in xk[[x]]$ be
algebraically independent  over $k(x)$.
\quad Set
$
f:=y\tau + z\sigma$, \\ $A:=R^* \cap k(x,y,z,f),
$ and
$B:=\bigcup_{n=0}^\infty B_n=\bigcup_{n=0}^\infty k[x,y,z,f_n]_{(x,y,z,f_n)}$ as in~(\ref{16.5.4de}.2). 
Let $Q := (y, z)R^* \cap B$. Then
\begin{enumerate}
\item The rings $A$ and $B$ are equal.

\item The ring $B$ is a four-dimensional non-Noetherian local UFD with maximal
ideal $\m_B = (x,y,z)B$,  and the $\m_B$-adic completion of $B$ is the three-dimensional
RLR $k[[x,y,z]]$.

\item The ring $B[1/x]$ is a  Noetherian regular UFD, the ring
$B/xB$ is a two-dimensional RLR, and for every
nonmaximal prime ideal $P$ of $B$, the ring $B_P$ is an RLR.

\item The ideal $Q$ is the unique prime ideal of $B$ of height 3.

\item
The ideal    $Q$ equals $\bigcup_{n=0}^\infty Q_n$
where $Q_n := (y,z, f_n)B_n$, $Q$ is a nonfinitely generated prime ideal,
and $QB_Q = (y,z,f)B_Q$.

\item There exist infinitely many height-two prime ideals of $B$  not contained
in $Q$ and each of these prime ideals
is contracted from $R^*$.

\item  For certain height-one primes $p$ contained in $Q$, there exist infinitely
many height-two primes between $p$ and $Q$ that  are
 contracted from  $R^*$, and infinitely many that are not contracted from $R^*$.
Hence the map $\Spec R^* \to \Spec B$ is not surjective.

\item Every saturated chain of prime ideals of $B$ has length either 3 or 4, and there
exist saturated chains of prime ideals of lengths both 3 and 4. Thus $B$  is not catenary.

\item
 Each height-one prime  ideal of $B$ is the contraction of a height-one prime ideal  of $R^*$.
 
 \item $B$ has Noetherian spectrum.
\end{enumerate}
\end{theorem}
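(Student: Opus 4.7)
The plan is to build the prime-ideal structure of $B$ in stages: first derive the structural items (2), (3), and (9) from Propositions~\ref{Bufd} and~\ref{16.5.15}; then pin down the height-$3$ prime $Q$ and prove it is not finitely generated (items (4) and (5)); deduce $A=B$ (item (1)) from the just-proved Proposition~\ref{6.2.9fp}; and finally handle items (6)--(8) and (10) by explicit prime constructions and standard catenarity and Noetherian-spectrum arguments.

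For the structural items, Proposition~\ref{Bufd} applied with prime element $a=x$ in the regular Noetherian UFD $R$ yields that $B$ is a UFD and $B[1/x]$ is a regular Noetherian UFD, Proposition~\ref{16.5.15}.\ref{Bloc} makes $B$ local with $\m_B=(x,y,z)B$, and Equation~\ref{16.5.15}.0 identifies $B/xB$ with the two-dimensional RLR $k[y,z]_{(y,z)}$; hence every nonmaximal $B_P$ is an RLR, obtained either by localizing $B/xB$ (when $x\in P$) or $B[1/x]$ (when $x\notin P$). The $\m_B$-adic completion is $k[[x,y,z]]$ since the $(x)$-adic completion of $B$ is $R^*$, and item (9) is immediate from Proposition~\ref{nfl}.4 applied to each prime element of the UFD $B$. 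For items (4) and (5), note that $f_n=y\tau_n+z\sigma_n\in(y,z)R^*$, so the composition $B_n\hookrightarrow R^*/(y,z)R^*\cong k[[x]]$ has kernel exactly $(y,z,f_n)B_n$; hence $Q\cap B_n=(y,z,f_n)B_n$, $Q=\bigcup_n(y,z,f_n)B_n$, and $\hgt Q=3$, so the chain $(0)\subset yB\subset(y,z)B\subset Q\subset\m_B$ gives $\dim B=4$. For uniqueness of $Q$ at height $3$, any height-$3$ prime $P$ with $x\in P$ descends to a height-$2$ prime of the two-dimensional RLR $B/xB$ and thus forces $P=\m_B$, contradicting $\hgt\m_B=4$; if $x\notin P$ and $(y,z)B\not\subseteq P$, Proposition~\ref{nfl}.2 makes $B_P\hookrightarrow(R^*[1/x])_{PR^*[1/x]}$ flat and bounds $\hgt P\le\dim R^*[1/x]=2$; otherwise the only minimal prime of $PR^*$ avoiding $x$ is $(y,z)R^*$, forcing $P\subseteq Q$ and $P=Q$ by equality of heights. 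To show $Q$ is not finitely generated, assume $Q=(a_1,\ldots,a_k)B$ with $a_i\in B_{n_0}$; then $Q\subseteq(y,z,f_{n_0})B$, so $f_{n_0+1}\in(y,z,f_{n_0})B$, and the recursion $f_{n_0}=xf_{n_0+1}+x(yc_{n_0+1}+zd_{n_0+1})$ together with the fact that $1-xc$ is a unit in the local ring $B$ forces $f_{n_0+1}\in(y,z)B$; reducing modulo $zB$ and applying Corollary~\ref{homimB-z} to identify the quotient with the ring of Example~\ref{16.5.3de} would put $\tau_{n_0+1}$ inside some $k[x,y,y\tau_M]_{(x,y,y\tau_M)}$, which is ruled out by direct inspection of the defining polynomials. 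The identification $QB_Q=(y,z,f)B_Q$ follows from $f\equiv x^n f_n\pmod{(y,z)B}$ after inverting $x$, and item (2)'s non-Noetherianness is now automatic.

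For item (1), $B$ is Krull by Proposition~\ref{Bufd}, so by Proposition~\ref{6.2.9fp} it suffices to show each height-$1$ prime $wB$ of $B$ contracts from $A$: Proposition~\ref{nfl}.4 gives $wR^*\cap B=wB$, and for any height-$1$ prime $\p$ of $R^*$ minimal over $wR^*$, the contraction $\p\cap A$ is a prime of $A$ above $wB$. Item (6) follows by contracting to $B$ the height-$2$ primes of $R^*$ of the form $(y-x^n g,z-x^n h)R^*$ for varying $n$ and suitable $g,h\in R$ so that the resulting prime avoids $(y,z)R^*$; flatness from Proposition~\ref{nfl}.2 preserves height $2$. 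For item (7), I would adapt Lemmas~\ref{prime} and~\ref{lost} to construct, above chosen height-$1$ primes inside $Q$, infinitely many height-$2$ primes of both contracted and lost type, witnessing non-surjectivity of $\Spec R^*\to\Spec B$. Item (8) uses the length-$3$ saturated chain $(0)\subset xB\subset(x,y)B\subset\m_B$ (with $(x,y)B$ prime since $R^*/(x,y)R^*=k[z]_{(z)}$) alongside the length-$4$ chain exhibited above. Item (10) follows from $\dim B=4$ (ACC on primes) together with finiteness of minimal primes of each ideal, obtained from the Noetherianity of $B[1/x]$ and $B/xB$ and the uniqueness of $Q$. The hardest step is item (5): a direct combinatorial attack via the endpiece recursions becomes unwieldy, and the reduction modulo $z$ via Corollary~\ref{homimB-z} to leverage the already-analyzed three-dimensional example is the cleanest route; the second most delicate point is the flatness dichotomy in item (4), resting on Proposition~\ref{nfl}.2's identification of the nonflat locus as $(y,z)R^*[1/x]$.
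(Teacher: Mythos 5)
Your overall plan tracks the paper's closely, and several of your choices are genuine alternatives that work: deriving $Q\cap B_n=(y,z,f_n)B_n$ as the kernel of $B_n\to R^*/(y,z)R^*\cong k[[x]]$ (using injectivity of $k[x]_{(x)}\hookrightarrow k[[x]]$) is a cleaner route to $Q=\bigcup Q_n$ and to $\dim B=4$ than the paper's appeal to the three-dimensional example; and your item~(1) argument via a minimal prime $\p$ of $wR^*$ combined with Proposition~\ref{nfl}.3 is a slick variant of the paper's maximal-disjoint-ideal argument. But there are two real gaps.

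The first is in item~(4), in the case $x\in P$. You assert that a height-$3$ prime $P$ containing $x$ ``descends to a height-$2$ prime of the two-dimensional RLR $B/xB$,'' but this is exactly what a non-Noetherian ring refuses to guarantee: a nonmaximal $P$ with $x\in P$ gives $P/xB$ of height $\le 1$ in $B/xB$, which by itself says nothing about $\hgt P$, since a length-$3$ chain below $P$ need not pass through $xB$. The paper's argument is genuinely more delicate: one takes a chain $(0)\subsetneq P_1\subsetneq P_2\subsetneq P\subsetneq\m_B$, shows $x\notin P_2$ by counting in $B/xB$, passes to a minimal prime $P_2'$ of $P_2R^*$ not containing $x$ (using Proposition~\ref{16.5.15}.\ref{znpna}), uses the nonflat locus to get $\hgt P_2'\ge 2$, and then derives a contradiction from $\dim R^*=3$. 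None of this is visible in your one sentence, and I don't see a shortcut around it. Your subcase ``$x\notin P$, $(y,z)B\not\subseteq P$'' is also imprecise — $PR^*[1/x]$ need not be prime, and $y\notin P$ does not by itself give a prime of $R^*[1/x]$ over $P$ avoiding $y$ — though the intended repair (pick any prime $P_1\supseteq PR^*[1/x]$ and split into $y,z\in P_1$ versus not) is the paper's argument and would fix it.

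The second gap is the final step of item~(5). After correctly reducing to $f_{n_0+1}\in(y,z)B$ and passing mod $z$ to ``$\tau_{n_0+1}$ lies in the three-dimensional ring $B'$ of Example~\ref{16.5.3de},'' you dismiss this as ``ruled out by direct inspection of the defining polynomials.'' The claim is true, but it is not a one-glance fact: one has to show $\tau\notin B'_M=k[x,y,y\tau_M]_{(x,y,y\tau_M)}$ for every $M$, e.g.\ by writing $\tau=\text{poly}(x)+x^M(y\tau_M)/y$ and noting $y\nmid x^M\cdot(y\tau_M)$ in a polynomial ring on algebraically independent variables, hence $x^M(y\tau_M)/y$ is not in the localization. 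The paper avoids this detour entirely with the direct lemma $(y,z)B\cap B_j=(y,z)B_j$, proved by observing that $B_j[f_{j+1}]\subseteq(B_j)_{(y,z)B_j}$ and checking the center of the two-dimensional RLR $(B_j)_{(y,z)B_j}$; this is both shorter and stays inside the four-dimensional picture. Your detour can be made rigorous, but as written it is a hole. Item~(6) is also more labored than necessary — the paper simply lifts the infinitely many height-one primes of $B/xB\cong R/xR$ and applies Proposition~\ref{16.5.15}.5 — but that is a stylistic point, not a gap.
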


We prove Theorem~\ref{16.5.4t} below. First, assuming Theorem~\ref{16.5.4t},  we display a
picture of  $\Spec(B)$ and make some remarks.

\setbox4=\vbox{\hbox{%
\hskip 70pt
     \rlap{\kern  1.500in\lower 0.166in\hbox to .3in{\hss $\m_B:=(x,y,z)B$\hss}}%
        \rlap{\kern  2.450in\lower .666in\hbox to .3in{\hss $Q:=(y,z,\{f_i\})B$\hss}}%
%  \rlap{\kern  -.00in\lower 1.266in\hbox to .3in{\hss $(x,y-\delta z)B\in\boxed{\phantom{Type   I}}$\hss}}%
\rlap{\kern  -.30in\lower 1.266in\hbox to .3in{\hss $(x,y-\delta z)B\in\boxed{\text{ht. 2},\not\subset Q}$\hss}}%
   \rlap{\kern  .250in\lower 1.866in\hbox to .3in{\hss $xB\in\boxed{\text{ht. 1,}\not\subset Q}$\hss}}%
    \rlap{\kern  1.50in\lower 1.266in\hbox to .3in{\hss
    $\boxed{\text{ht. 2, contr. }R^*}$\hss}}%
\rlap{\kern  3.250in\lower 1.266in\hbox to .3in{\hss
$(y,z)B\in\boxed{\text{ht. 2, Not contr. }R^*}$\hss}}%
   \rlap{\kern 1.7in\lower 1.866in\hbox to .3in{\hss
   $yB,zB\in\boxed{\text{ht. 1,}\subset Q }$\hss}}%
%  \rlap{\kern  2.950in\lower 1.866in\hbox to .3in{\hss $\boxed{\text{ht. 1s}\not\subset Q}$\hss}}%
\rlap{\kern  1.500in\lower 2.466in\hbox to .3in{\hss $(0)$\hss}}%
     \rlap{\special{pa 1500 230}    \special{pa 150 1100}    \special{fp}}%
\rlap{\special{pa 1500 230}    \special{pa 2400 550}    \special{fp}}%
\rlap{\special{pa 2400 750}    \special{pa 1500 1100}    \special{fp}}%
 %   \rlap{\special{pa 2500 750}    \special{pa 1500 1100}    \special{fp}}%
    \rlap{\special{pa  2400 750}    \special{pa 3500 1100}    \special{fp}}%
     \rlap{\special{pa 450 1700}    \special{pa 150 1350}    \special{fp}}%
     \rlap{\special{pa 450 1960}    \special{pa 1500 2320}    \special{fp}}%
    \rlap{\special{pa 2000 1960}    \special{pa 1500 2320}    \special{fp}}%
   \rlap{\special{pa 2000 1700}    \special{pa 150 1350}    \special{fp}}%
   }} \box4 \vskip 10 pt

\centerline{Diagram~\ref{16.5.4t}.0}
\noindent
{\bf Comments on Diagram~\ref{16.5.4t}.0.}
A line going from a box at one level to a box at a
higher level indicates that every prime ideal in the lower level box is contained
in at least one prime ideal in the higher level box.  Thus as indicated in the diagram, every
height-one prime $gB$ of $B$ is contained in a height-two prime of $B$ that contains $x$ and so
is not contained in $Q$.
This is obvious if $gB = xB$ and can  be seen by considering minimal primes of $(g,x)B$ otherwise.
Thus $B$ has no maximal saturated chain of length 2.
We have not drawn any lines from the lower level righthand box to higher boxes that
are contained in $Q$ because we are uncertain about what
inclusion relations exist for these primes. We discuss this situation in Remarks~\ref{16.5.4tr}.

\begin{proof}  (of Theorem~\ref{16.5.4t}) For  convenience we prove item~2 first: Since $B$ is a directed union of four-dimensional RLRs, we have $\dim B \le 4$.
By Corollary~\ref{homimB-z} and Theorem~\ref{16.5.2}, $\dim(B/zB)=3$, and so $\dim B\ge 4$. Thus $\dim B=4$.
 By Proposition~\ref{16.5.15}, $B$ is local with maximal ideal $\m_B = (x,y,z)B$ and the
$(x)$-adic completion of $B$ is $R^*$. Thus the $\m_B$-adic completion of $B$ is $k[[x,y,z]]$.
By Krull's Altitude Theorem,  the ring $B$ is not Noetherian \cite[Theorem 13.5]{M}.  The ring $B$ is a UFD by
Proposition~\ref{Bufd}.

For item~1,    the ring $B$ is a UFD by  item~2, and hence  a Krull domain,    and the extension 
$B\hookrightarrow A$ is birational. Thus it suffices to show  that each height-one prime
$P$ of $B$ is the contraction of  a prime ideal of $A$ by Proposition~\ref{6.2.9fp}.

Let $\p$ be a height-one prime ideal of $B$. Then 
$\p R^*\cap B=\p$ by Proposition~\ref{nfl}.4. Also $B\setminus \p$ is a multiplicatively closed subset of $R^*$, and so, if 
$P$ is 
an ideal of $R^*$ maximal with respect to $P\cap (B\setminus \p)=\emptyset$, then $P$ is a prime ideal of $R^*$ and 
$P\cap B=\p$. Then also $P\cap A$ is a prime ideal of $A$ with $(P\cap A)\cap B=\p$, and so $\p$  is contracted from $A$.
Thus $A = B$ as desired for item~1.

%For  item~1,  consider the inclusion
%\begin{equation*}
%\varphi:S := R[f] \hookrightarrow T[1/x] = R[\tau, \sigma][1/x].
%\tag{\ref{16.5.4t}.0}
%\end{equation*} 
%that $\varphi_P$ is flat by \cite[Remark~2.8.3 and Corollary~3.4]{insideps}
%or \cite[Corollary~13.5]{powerbook}. This follows from Proposition~\ref{nfl}.2.
% \cite[Definition~2.5.1]{insideps} or \cite[Definition~11.6.1]{powerbook}.

For item 3,   the ring
$B[1/x]$ is a Noetherian regular UFD by Proposition~\ref{Bufd}.1.
By Equation~\ref{16.5.15}.0,  we have $R/xR = B/xB$.
Thus  $B/xB$ is a two-dimensional RLR.

For  the last part of item 3,  if $x\notin P$, then $B_P$ is a localization of $B[1/x]$,
which is Noetherian and regular, and so $B_P$ is a regular local ring.
In particular, this proves that
$B_Q$ is a  regular local ring.
If $x\in P$ and $\hgt P=1$, then $P=(x)$ and  $B_{xB}$ is a DVR.
If $x\in P$ and $\hgt(P)=2$, the ideal
$P$ is  finitely generated since $B/xB$  is an RLR. Since $B$ is a UFD
from item~2,  it follows that
$B_P$ is a local UFD of dimension 2 with finitely generated maximal ideal.
Thus $B_P$ is Noetherian by Cohen's Theorem \cite[Theorem~3.4]{M}. This, combined with
$B/xB$ a regular local ring, implies that $B_P$ is a regular local ring.
 Since $\hgt P \le 2$ for every nonmaximal
prime ideal $P$ of $R$ with $x \in P$, this completes the proof of item~3.

For item~4,  since $(y,z)R^*$  is a prime ideal of $R^*$, the ideal $Q = (y,z)R^* \cap B$ is prime.
By Proposition~\ref{16.5.15}, the ideals $yB$ and $(y,z)B$ are prime.
Consider the chain of prime ideals
$$
(0) ~ \subset yB ~ \subset (y,z)B ~ \subset ~ Q ~ \subset ~  \m_B.
$$
The list  $y, z, f, x$ shows that each of the inclusions is strict; for example,  we have $f\in Q\setminus (y,z)B$,
 since $f\notin (y,z) B_n$ for every $n\in\N$. By item~ 2 we have $\hgt\m_B=4$. Thus
$\hgt Q = 3$. This also implies that $(y,z)B$ is a height-two prime ideal of $B$.

For the uniqueness in item 4,  let $P$ be a nonmaximal prime ideal of $B$.  We first consider the case that $x \notin P$. Then, by Proposition~\ref{16.5.15}.4,
$x^n \not\in PR^*$ for each positive integer $n$.  Hence
$PR^*[1/x] \ne R^*[1/x]$.
 Let $P_1$ be a prime ideal of $R^*[1/x]$ such that $P \subseteq P_1$.
If both $y$ and $z$ are in $P_1$,
then $(y,z)R^*[1/x] \subseteq P_1$. Since $(y,z)R^*[1/x]$ is maximal,
we have $(y,z)R^*[1/x] = P_1$.  Therefore, $P\subseteq (y,z)R^*[1/x] \cap B=Q$, and so either
$\hgt(P)\le 2$  or $P=Q$.

Next suppose that $x\notin P$ and $y$ or $z$ is not in $P_1$. By Propositions~\ref{nfl}.1 and \ref{17.45p}.2, the map $\psi: B \to R^*[1/x]_{P_1}$ is flat.
Since  $ \dim R^*[1/x] = 2$ we have $\hgt(P_1) \le 2$. Flatness of  $\psi$
implies  $\hgt(P_1 \cap B) \le 2$; see  \cite[Theorem~9.5]{M}.
Hence $\hgt P \le 2$.   

To complete the proof of  item~4, we consider the case that $x\in P$. We have $\hgt P\le 3$, since $\dim B=4$ and $P$ is not maximal. If $\hgt P\ge 3$,  there exists a  chain of primes of the form 
\begin{equation*}(0)\subsetneq P_1\subsetneq P_2\subsetneq P\subsetneq (x,y,z)B.\tag{\ref{16.5.4t}.1}
\end{equation*}
By Equation~\ref{16.5.15}.0,  $B/xB\cong R/xR$, and so $\dim (B/xB)=2$.  If
$x\in P_2$, then $\hgt P_2\ge 2$ implies that $(0)\subsetneq xB\subsetneq P_2\subsetneq P\subsetneq (x,y,z)B$, a contradiction to  $\dim (B/xB)=2$. Thus $x\notin
P_2$. Since $x\in P$ and $P$ is nonmaximal, we have that $y$ or $z$ is not in $P$.
Hence $y$ or $z$ is not in $P_2$. 

By Equation~\ref{16.5.15}.0, $P$ corresponds to a nonmaximal prime ideal $P'$ of $R^*$ containing $PR^*$.
Let $P_2'$ be a prime ideal of  $R^*$ inside $P'$ that is minimal over $P_2R^*$. If both $y$ and $z$ are in $P_2'$, then, $(x,y,z)R^*\subseteq P'$, a contradiction to $P'$ nonmaximal. 
By Proposition~\ref{16.5.15}.\ref{znpna},  $P_2'$ does not contain $x$. Thus $P_2'\subsetneq P'\subsetneq (x,y,z)R^*$. Also $P_2'=P_2''\cap R^*$, where $P_2''$ is a prime ideal of $R^*[1/x]$, and one of $y$ and $z$ is not an element of $P_2''$.

 By Proposition~\ref{nfl}.2,  the map $\psi: B \to R^*[1/x]_{P_2''}$ is flat. 
 This implies
$\hgt (P_2'')\ge \hgt (P_2''\cap B)\ge \hgt P_2\ge 2$; that is, $\hgt(P_2'') \ge 2$. Also 
 $P_2''$ intersects $R^*$ in $P_2'$, and  so $\hgt P_2'\ge 2$. 
 Thus in 
$R^*$ we have a chain of primes $ P_2'\subsetneq P'\subsetneq (x,y,z)R^*$, where $\hgt P_2'\ge 2$, a contradiction, since $R^*$, a localization of 
$k[y,z][[x]]$, has dimension $3$. 
This proves item~4.

For item~5,  let $Q'= \bigcup_{n=0}^\infty Q_n$, where each $Q_n=(y,z,f_n)B_n$.
Each $Q_n$ is a prime ideal of height 3 in the 4-dimensional
RLR $B_n$. Therefore
%$ \bigcup_{n=0}^\infty Q_n$
$Q'$ is a prime ideal of $B$ of height $\le 3$
that is contained in $Q$.
The ideal $(y,z)B$ is a prime ideal of height 2  by the proof of item~3.
Hence  $\hgt(Q') = 3$ and we have $Q' = Q$.

To show the ideal $Q$ is not finitely generated, we show for
each positive integer $n$ that $f_{n+1} \not\in (y, z, f_n)B$.
By Equation~\ref{16.5.4de}.1, $f_n = xf_{n+1} +yxc_{n+1} + zxd_{n+1}$.  If
$f_{n+1} \in (y,z, f_n)B$, then $f_{n+1} = ay + bz + c(xf_{n+1} +yxc_{n+1} + zxd_{n+1})$,
where $a,b,c \in B$.  This implies $f_{n+1}(1-cx)$ is in the ideal $(y,z)B$. By Proposition~\ref{16.5.15}.1, $x\in\mathcal J(B)$, and so $1 - cx$ is a unit of $B$. 
This implies  $f_{n+1} \in (y,z)B \cap B_{n+1}$.

For each positive integer $j$,  we show that  $(y,z)B  \cap B_j = (y,z)B_j$.
It is clear that $(y,z)B_j \subseteq (y,z)B \cap B_j$.   To show the reverse
inclusion, it suffices to show
for each integer $j \ge 0$ that $(y,z)B_{j+1} \cap B_j \subseteq (y,z)B_j$.
We  have $B_j[f_{j+1}] \subseteq (B_j)_{(y,z)B_j}$
since $f_{j+1} = \frac{f_j}{x}~ + ~ yc_{j+1} + zd_{j+1}$  by (\ref{16.5.4de}.1).
  The center of the 2-dimensional RLR $(B_j)_{(y,z)B_j}$     on $B_j[f_{j+1}]$ is
the prime ideal $(y,z)B_j[f_{j+1}] $.  This prime ideal is contained
in the maximal ideal $(x,y,z,f_{j+1})B_j[f_{j+1}]$;   it follows that
$B_{j+1} \subseteq (B_j)_{(y,z)B_j}$ and so $(y,z)B_{j+1}\cap B_j\subseteq (y,z)B_j$.

Thus  $(y,z)B \cap B_{n+1}  = (y,z)B_{n+1}$, and $f_{n+1} \in (y,z)B_{n+1}$.
Since $x,y,z$ and $f_{n+1}$ are algebraically independent variables over $k$, and $B_{n+1}=k[x,y,z,f_{n+1}]_{(x,y,z,f_{n+1})}$,   this is a contradiction.
 We conclude
that $Q$ is not finitely generated.

By item~3 the ring $B_Q$ is a three-dimensional regular local ring. Since $x$ is a unit of $B_Q$ and since
$Q = (y,z,f, f_1, f_2, \dots)B$, it follows from Proposition~\ref{16.5.15}.2  ($a=x$ and $C=B$) that
$QB_Q  = (y,z,f)B_Q$. This establishes item~5.

For item 6, since $x \not\in Q$ and
$B/xB \cong R/xR$, there are infinitely many height-two primes of $B$
containing $xB$. This proves there are infinitely many height-two primes of $B$ not
contained in $Q$.   If $P$ is a height-two prime of $B$ not contained in $Q$, then $\hgt(\m_B/P) = 1$, by item 4 above,
and so, by Proposition~\ref{16.5.15}.5, $P$ is contracted from $R^*$. This completes item~6.

For   item~7 we  show that $p = zB$
has the stated properties. By Corollary~\ref{homimB-z}, the ring $B/zB$ is
isomorphic to the ring called $B$ in Example~\ref{16.5.3de}. For convenience we relabel the
ring of Example~\ref{16.5.3de} as $B'$. By Theorem~\ref{16.5.2},
$B'$  has exactly one non-finitely generated prime ideal, which we label $Q'$, and $\hgt Q'=2$.  It follows that
$Q/zB= Q'$.    By Discussion~\ref{types}, there are infinitely many height-one primes contained in $Q'$
of Type    II (that is, primes that are contracted from $R^*/zR^*$) and infinitely many
height-one primes
 contained in $Q'$
of Type    III (that is, primes that are not contracted from $R^*/zR^*$).
The preimages in $R^*$ of these  primes are  height-two primes
of $B$ that are contained in $Q$ and contain $zB$. It follows that
there are infinitely many contracted from $R^*$ and there are
infinitely many not contracted from $R^*$, as desired for item~7.

For item~8, we have a saturated chain of prime ideals
$$
(0)~ \subset ~ xB ~ \subset ~(x,y)B ~ \subset ~ (x,y,z)B ~= ~ \m_B
$$  of length 3 by Equation~\ref{16.5.15}.0  We have a saturated chain of prime ideals
$$
(0)~  \subset ~  yB ~ \subset~ (y,z)B ~ \subset ~ Q ~\subset ~ \m_B
$$ of length 4 from the proof of item~4.  Hence $B$ is not catenary.
By item~2, $\dim B = 4$, so there is no saturated chain of prime ideals of $B$ of length greater
than 4. By Comments~\ref{16.5.4t}.0, no saturated chain of prime ideals of $B$ has length
less than 3.

For item~9, since $R^*$ is a Krull domain  and $B = A = \mathcal Q(B) \cap R^*$, it
follows that $B$ is
a Krull domain and  each height-one prime of $B$ is the contraction of a
height-one prime of $R^*$.
Since B/xB and B[1/x] are Noetherian, item~10 follows from \cite[Corollary~1.3]{HR}.
\end{proof}

\begin{remarks} \label{16.5.41r} Let the  notation  be as in Theorem~\ref{16.5.4t}.

 (1)  It follows from Theorem~\ref{16.5.4t}
that the localization
$B[1/x]$ has a unique maximal ideal $QB[1/x] = (y,z,f)B[1/x]$ of height
three and has infinitely many maximal ideals of height two. We observe that  $B[1/x]$ has no
maximal ideal of height one. To show this last statement it suffices to show for each irreducible
element $p$ of $B$ with $pB \ne xB$  there exists $P \in \Spec B$ with $pB \subsetneq P$
and $x \not\in P$. Assume there does not exist such a prime ideal $P$. Consider the ideal
$(p, x)B$. This ideal has height two and has only finitely many minimal primes since $B/xB$
is Noetherian. Let $g$ be an element of $\m_B$ not contained in any of the minimal primes of
$(p,x)B$. Every prime ideal of $B$ that contains $(g,p)B$ also contains $x$ and hence has height
greater than two. Since $x \notin Q$, it follows that $(g,p)B$ is $\m_B$-primary,
 and hence that $(g,p)R^*$ is $\m_{R^*}$-primary. Since $R^*$ is Noetherian and  $\hgt \m_{R^*} = 3$,
 this contradicts the  Altitude Theorem
of Krull \cite[Theorem~9.3]{N2}.

(2)  Every ideal $I$ of $B$ such that $IR^*$ is $\m_{R^*}$-primary is $\m_B$-primary
by Proposition~\ref{16.5.15}.5.

(3)     Define
$$
C_n  ~:=~ \frac{B_n}{(y,z)B_n} \quad \text{ and  }  \quad C ~:= ~ \frac{B}{(y,z)B}.
$$
We have $C = \bigcup_{n=0}^\infty C_n$  by item 1.   We show that $C$ is a rank 2
valuation domain with principal maximal ideal generated by the image of $x$.
For each positive integer $n$,
let $g_n \in C_n$ denote the image of the element  $f_n$ and let $x$ denote the image of $x$.
Then $C_n = k[x, g_n]_{(x,g_n)}$ is a
2-dimensional RLR.
By (\ref{16.5.4de}.1), $f_n = xf_{n+1} + x(c_{n}y + d_{n}z)$.  It follows that
 $g_n = xg_{n+1}$ for each $n \in \N$.  Thus $C$ is an infinite directed union
of quadratric transformations of 2-dimensional regular local rings.
Thus $C$ is a valuation domain  of dimension at most 2  by \cite{Abhy}.  By
items~2 and 4 of Theorem~\ref{16.5.4t}, $\dim C \ge 2$, and therefore $C$ is a valuation
domain of rank 2.
 The maximal ideal of $C$
is $xC$.

By Corollary~\ref{homimB-z},  $B/zB\cong D$, where $D$ is the ring $B$ of
Example~\ref{16.5.3de}. By an argument similar to that of  Proposition~\ref{homimB} and
Corollary~\ref{homimB-z}, we  see that the above ring $C$ is isomorphic to $D/yD$.
\end{remarks}

\begin{question} \label{17.5.12}  For the ring  $B$
constructed as in Example \ref{16.5.4de}, we ask:
 Is $Q$ the only prime ideal of $B$ that is not finitely generated?
\end{question}

Theorem~\ref{16.5.4t} implies that the only possible nonfinitely generated prime
ideals of $B$ other than  $Q$ have height two. We do not know whether every height-two
prime ideal of $B$ is finitely generated. We show  in Corollary~\ref{litNoeth} and
Theorem~\ref{litnonN}
that certain of the height-two primes of $B$
are finitely generated.

\smallskip

Lemma~\ref{flat} is the key to the proof of Theorem~\ref{build} and is also
useful below. We are grateful to Roger Wiegand for observing it.

\begin{lemma}  $\phantom{x}$\cite[Lemma~3.1]{noehom}, \cite[Lemma~8.2]{powerbook}  \label{flat}
Let $S$ be a subring of a ring $T$ and
let $b \in S$ be a regular element of both $S$ and $T$.
Assume that $bS = bT \cap S$ and $S/bS = T/bT$. Then
\begin{enumerate}
\item $T[1/b]$ is flat over $S$ $\iff$ $T$ is flat over $S$.
\item If $T$ and $S[1/b]$ are both Noetherian and $T$ is
flat over $S$, then $S$ is Noetherian.
\end{enumerate}
\end{lemma}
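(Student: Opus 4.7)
The forward direction of (1) is immediate: $T[1/b]$ is a localization of $T$, hence flat over $T$, and transitivity of flatness gives that $T[1/b]$ is flat over $S$ whenever $T$ is. For the reverse implication, I would use the ideal-theoretic flatness criterion: to show $T$ is flat over $S$ it suffices to prove $\Tor_1^S(S/I,T)=0$ for every finitely generated ideal $I\subseteq S$, equivalently that the kernel $K:=\ker(I\otimes_S T\to T)$ vanishes. Localizing at $b$ and using that $T[1/b]$ is flat over $S$ (and hence over $S[1/b]$) gives $K[1/b]=0$, so every element of $K$ is annihilated by some power of $b$. The hypotheses $bS = bT\cap S$ and $S/bS = T/bT$ inductively yield $b^n S = b^n T\cap S$ and $S/b^n S = T/b^n T$ for every $n\geq 1$; combined with $b$ being regular on $T$, this stratified compatibility should let me descend a $b$-power-torsion element of $K$ to a genuine relation in $I\otimes_S S = I$, where $b$-regularity on $S$ forces vanishing.

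For Part~(2), fix an ideal $I$ of $S$; I want to exhibit a finitely generated subideal equal to $I$. Noetherianness of $T$ yields $a_1,\dots,a_n\in I$ with $IT = (a_1,\dots,a_n)T$; Noetherianness of $S[1/b]$, after clearing denominators, yields $c_1,\dots,c_m\in I$ with $IS[1/b] = (c_1,\dots,c_m)S[1/b]$. Set $I_0 := (a_1,\dots,a_n,c_1,\dots,c_m)\subseteq I$. Flatness of $T$ over $S$ makes $I\otimes_S T\hookrightarrow T$ with image $IT$, so $(I/I_0)\otimes_S T = IT/I_0 T = 0$, and $(I/I_0)[1/b] = 0$ by construction. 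Tensoring the first vanishing over $T$ with $T/bT = S/bS$ yields $(I/I_0)/b(I/I_0) = 0$, so $I/I_0$ is both $b$-power torsion and $b$-divisible. To force $I/I_0 = 0$, I would decompose each $x\in I$ using the splitting $T = S + bT$ (coming from $S/bS = T/bT$) together with $bT\cap S = bS$: write $x = y + bz$ with $y\in I_0$ and $z\in I_0 T\cap S$, iterate to obtain $x\equiv b^n z_n \pmod{I_0}$ for every $n\geq 1$, and pair this with a power $N$ for which $b^N x\in I_0$ (available from $I_0 S[1/b] = IS[1/b]$) to conclude $x\in I_0$. Hence $I = I_0$ is finitely generated, and $S$ is Noetherian.

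The main obstacle in both parts is the same: the two pieces of partial information---a $b$-inverted flat/Noetherian statement and a residue-level identification modulo $b$---only force vanishing after being carefully glued via the hypothesis $bS = bT\cap S$. Carrying out the descent so that a $b$-divisible and $b$-power-torsion subquotient arising in this setup really vanishes (rather than allowing any Pr\"ufer-type pathology) is the delicate step, and depends crucially on the fact that the witnesses $z_n$ produced by the decomposition live in $I_0 T\cap S$, not merely in $S$, so that their compatibility under $b$-multiplication can be chased through the filtration $S\supset bS\supset b^2 S\supset\cdots$ without losing track.
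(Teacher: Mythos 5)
The paper cites this lemma from \cite{noehom} and \cite{powerbook} without reproducing the proof, so there is no in-text argument to compare against; I will assess your attempt on its own terms.

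Your setup is on the right track in both parts — you correctly observe that the obstruction (call it $K=\Tor_1^S(S/I,T)$ in part~(1), $I/I_0$ in part~(2)) is $b$-power torsion, and in part~(2) you also correctly deduce $b$-divisibility of $I/I_0$ from $(I/I_0)\otimes_S T=0$ and $T/bT\cong S/bS$ — but the closing step in both parts has a genuine gap, and it is the \emph{same} gap. A $b$-power-torsion module that is $b$-divisible need not be zero; $\mathbb{Z}[1/p]/\mathbb{Z}$ is the standard counterexample. What the hypotheses actually give, and what you never establish, is that $b$ acts \emph{injectively} on the obstruction module, and that injectivity cannot be extracted by the element-chase you sketch. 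Concretely: in part~(1), when you write $\xi=\sum a_i\otimes t_i$, set $a:=\sum a_i s_i\in I$, and deduce $a\in b^NT\cap S=b^NS$, you obtain $a=b^Na'$ with $a'\in S$ but $a'\notin I$ in general, so there is no element $a'\otimes 1$ of $I\otimes_S T$ to descend to. In part~(2), your iteration produces $x\equiv b^n z_n\pmod{I_0}$ with the tower relation $z_n\equiv b\,z_{n+1}\pmod{I_0}$; combined with $b^Nx\in I_0$ this only yields $b^{N+n}z_n\in I_0$, not $b^n z_n\in I_0$, and cancelling the extra $b^N$ requires precisely the injectivity you have not proved. The classes $\bar z_n$ in $S/I_0$ form a $b$-divisible chain of $b$-power-torsion elements, exactly the Pr\"ufer-type configuration that your own last paragraph worries about — and rightly so, because nothing in the decomposition rules it out.

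The missing ingredient is homological, and once inserted the proof closes quickly. For part~(1): apply $\Tor^S(S/I,-)$ to $0\to T\xrightarrow{\,b\,} T\to T/bT\to 0$. Since $T/bT\cong S/bS$ as $S$-modules and $b$ is a nonzerodivisor on $S$, the complex $0\to S\xrightarrow{b} S\to S/bS\to 0$ is a free resolution, so $\Tor_2^S(S/I,T/bT)=\Tor_2^S(S/I,S/bS)=0$; the long exact sequence then forces $b$ to be injective on $\Tor_1^S(S/I,T)$, and a $b$-power-torsion module on which $b$ is injective vanishes. For part~(2): flatness of $T$ over $S$ gives $\Tor_1^S(I/I_0,T)=0$, and together with $(I/I_0)\otimes_S T=0$ the same long exact sequence yields $\Tor_1^S(I/I_0,T/bT)=0$. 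But $\Tor_1^S(I/I_0,S/bS)=\ker\bigl(b:I/I_0\to I/I_0\bigr)$ by the length-one resolution of $S/bS$, so $b$ is injective on $I/I_0$; combined with $(I/I_0)[1/b]=0$ this gives $I/I_0=0$. In short, your draft proves surjectivity of $b$ on the obstruction where injectivity is what is needed, and the route to injectivity is a $\Tor$ computation (via $\mathrm{pd}_S(S/bS)\le 1$ in part~(1), via flatness of $T$ in part~(2)), not the filtration $T=S+b^nT$.
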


The following theorem shows that the nonflat locus  of the
map $\varphi: B \to R^*[1/a]$  yields flatness for certain homomorphic images of $B$, if $R, a, R^*$ and $B$ are as in the general construction outlined in Theorem~\ref{build}.

\begin{theorem}\label{Noeth} Let $R$ be a Noetherian integral domain with field of
fractions $K$, let $a \in R$ be a nonzero nonunit,
and let
$R^*$ denote  the $(a)$-adic
completion of $R$. Let $s$ be a positive integer and let ${\underline\tau}=\{\tau_1,\dots,\tau_s\}$ be 
a set of elements of $R^*$ that are algebraically independent over $K$, so 
that $R[\underline\tau]$ is  a polynomial ring in $s$ 
variables over $R$. Define $A := K(\underline\tau)  \cap  R^*$. Let  $U_n, B_n$, $B$ and $U$ be defined as 
follows
$$
U~:= ~\bigcup_{r=0}^\infty U_{n}  \quad \text{and}
\quad B ~:=~
\bigcup_{n=0}^\infty B_{n},
$$
where, for each integer $n \ge 0$, $U_{n} := R[\tau_{1n},
\ldots, \tau_{sn}]$, ~
$B_{n} := (1 + aU_{n})^{-1}U_{n}$, and each
$\tau_{in}$ is the $n^{\text{th}}$ endpiece of $\tau_i$ defined as in Equation~\ref{16.5.1}.a. 
Assume that  $F$ is  an ideal of $R^*[1/a]$ that defines the nonflat locus of the
map $\varphi: B \to R^*[1/a]$.
Let $I$ be
an ideal in $B$ such that $IR^* \cap B = I$ and $a$ is regular on $R^*/IR^*$.

\begin{enumerate}
\item  If   $IR^*[1/a] + F = R^*[1/a]$, then   $\varphi \otimes_B (B/I)$ is flat.
\item
If $R^*[1/a]/IR^*[1/a]$ is flat over $B/I$, then $R^*/IR^*$ is flat over $B/I$.
\item
If $\varphi \otimes_B (B/I)$ is flat, then $B/I$ is Noetherian.
\end{enumerate}
\end{theorem}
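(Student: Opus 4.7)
The plan is to prove the three parts in order, with parts 2 and 3 both funneling into Lemma~\ref{flat} applied to $S = B/I$, $T = R^*/IR^*$, $b = $ image of $a$.

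For item~1, the hypothesis $IR^*[1/a] + F = R^*[1/a]$ says, in terms of $\Spec R^*[1/a]$, that no prime of $R^*[1/a]$ contains both $IR^*[1/a]$ and $F$. I verify flatness of $\varphi \otimes_B B/I: B/I \to R^*[1/a]/IR^*[1/a]$ by checking it locally at each prime $\bar{\mathfrak{q}}$ of the target. Such $\bar{\mathfrak{q}}$ corresponds to a prime $\mathfrak{q}$ of $R^*[1/a]$ containing $IR^*[1/a]$, and then $F \not\subseteq \mathfrak{q}$, so by the defining property of the nonflat locus from Discussion~\ref{flatpoly} the map $B_{\mathfrak{q} \cap B} \to (R^*[1/a])_\mathfrak{q}$ is flat. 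Base-changing by $- \otimes_{B_{\mathfrak{q} \cap B}} (B/I)_{\overline{\mathfrak{q} \cap B}}$ preserves flatness and yields the required local flatness of $\varphi \otimes_B B/I$.

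For item~2, I apply Lemma~\ref{flat}(1), so I need to verify its three hypotheses: $b$ is regular on both $S$ and $T$, the equality $bS = bT \cap S$ holds, and $S/bS = T/bT$. Regularity of $b$ on $T$ is given; regularity on $S$ follows because if $ax \in I$ with $x \in B$, then $ax \in IR^*$, so $x \in IR^*$, so $x \in IR^* \cap B = I$. The equality $S/bS = T/bT$ reduces to $B/(I + aB) = R^*/(IR^* + aR^*)$, which is immediate from $B/aB = R^*/aR^*$ (Equation \ref{16.5.15}.0). The nontrivial step is showing $(aR^* + IR^*) \cap B = aB + I$: given $x \in B$ with $x = ar + \sum i_j s_j$ for $r, s_j \in R^*$ and $i_j \in I$, use $B/aB = R^*/aR^*$ to write $r = r' + a\rho$ and $s_j = s'_j + a\sigma_j$ with $r', s'_j \in B$ and $\rho, \sigma_j \in R^*$; then $y := ar' + \sum i_j s'_j \in aB + I$ and $x - y \in aR^* \cap B = aB$, so $x \in aB + I$. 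With the hypotheses verified, Lemma~\ref{flat}(1) gives the desired equivalence, and the flatness of $T[1/b] = R^*[1/a]/IR^*[1/a]$ over $S = B/I$ forces flatness of $T = R^*/IR^*$ over $S$.

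For item~3, I first invoke item~2 to promote flatness of $R^*[1/a]/IR^*[1/a]$ over $B/I$ to flatness of $R^*/IR^*$ over $B/I$, and then apply Lemma~\ref{flat}(2) with the same $S, T, b$. I need $T$ and $S[1/b]$ Noetherian: $T = R^*/IR^*$ is a quotient of the Noetherian ring $R^*$ (which is Noetherian as an ideal-adic completion of the Noetherian ring $R$), and $S[1/b] = (B/I)[1/a] = B[1/a]/IB[1/a]$ is a quotient of $B[1/a]$, which by Proposition~\ref{16.5.15}(2) is a localization of the polynomial ring $R[\underline{\tau}]$ and hence Noetherian. The conclusion of Lemma~\ref{flat}(2) then gives that $S = B/I$ is Noetherian.

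The main obstacle I expect is the verification of $bS = bT \cap S$ in item~2, since it is the one step that genuinely requires exploiting the $a$-adic structure (namely $B/aB = R^*/aR^*$) rather than formal manipulation; the rest of the argument is an assembly of Lemma~\ref{flat} with ingredients already supplied by the general construction.
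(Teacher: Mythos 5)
Your proof is correct and follows essentially the same route as the paper: item~1 by localizing at primes of $R^*[1/a]$ containing $I$ and invoking the defining property of the nonflat locus, and items~2 and~3 by applying Lemma~\ref{flat} with $S = B/I$, $T = R^*/IR^*$, $b$ the image of $a$. You spell out a few verifications the paper leaves implicit (the identity $(aR^*+IR^*)\cap B = aB+I$, the Noetherianness of $T$ and $S[1/b]$), but the underlying argument is the same.
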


\begin{proof} The hypothesis of item~1 implies that $\varphi_P$ is flat
for each $P \in \Spec R^*[1/a]$ with $I \subseteq P$. Hence for each
such $P$ we have $\varphi_P \otimes_B (B/I)$ is flat. Since flatness is
a local property, it follows that $\varphi \otimes_B (B/I)$ is flat.

For  items~2 and 3,  apply Lemma~\ref{flat}  with $S=B/I$ and $T=R^*/IR^*$;
the element $b$ of Lemma~\ref{flat} is the image in $B/IB$ of the element $a$
from the setting of Theorem~\ref{build}.
Since $IR^* \cap B = I$, the ring $B/I$ embeds into $R^*/IR^*$, and since $B/aB = R^*/aR^*$, the
ideal $a(R^*/IR^*) \cap (B/I) = a(B/I)$.
Thus  item~2 and item~3 of Theorem~\ref{Noeth} follow
from item~1 and item~2, respectively, of Lemma~\ref{flat}.
\end{proof}

\begin{corollary}\label{litNoeth} Assume the notation of Example~\ref{16.5.4de}. Let $w$ be a prime
element of $B$.   Then $B/wB$ is Noetherian if and only if $w \notin Q$.  Thus 
every nonfinitely generated
ideal of $B$ is contained in $Q$.
\end{corollary}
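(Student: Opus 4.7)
The plan is to derive the forward implication by invoking Theorem~\ref{Noeth} with $a=x$ and $I=wB$. If $w$ and $x$ are associates then $B/wB=B/xB$ is a two-dimensional RLR by Theorem~\ref{16.5.4t}.3 and both sides of the equivalence hold, so assume $wB\neq xB$. Then $wR^*\cap B=wB$ by Proposition~\ref{nfl}.4, and $x\notin wB$ together with Proposition~\ref{16.5.15}.\ref{znpna} gives that $x$ is a nonzerodivisor on $R^*/wR^*$. The hypothesis $w\notin Q=(y,z)R^*\cap B$ means $w\notin (y,z)R^*$, so the image of $w$ in $R^*/(y,z)R^*\cong k[[x]]$ is nonzero and therefore becomes a unit after inverting $x$; consequently $wR^*[1/x]+(y,z)R^*[1/x]=R^*[1/x]$. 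Since Proposition~\ref{nfl}.2 shows that $(y,z)R^*[1/x]$ defines the nonflat locus of $\varphi\colon B\to R^*[1/x]$, items~1 and~3 of Theorem~\ref{Noeth} together yield that $B/wB$ is Noetherian.

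For the converse, suppose $w\in Q$. Then $Q/wB$ is a prime ideal of $B/wB$, and any finite generating set of $Q/wB$ together with $w$ would generate $Q$ in $B$, contradicting the fact from Theorem~\ref{16.5.4t}.5 that $Q$ is not finitely generated. Hence $B/wB$ contains the nonfinitely generated prime $Q/wB$ and cannot be Noetherian.

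For the final assertion I would argue the contrapositive: let $I$ be an ideal of $B$ with $I\not\subseteq Q$ and choose $g\in I\setminus Q$. Since $B$ is a UFD by Theorem~\ref{16.5.4t}.2, write $g=w_1^{e_1}\cdots w_n^{e_n}$ with the $w_i$ pairwise nonassociate prime elements. Primality of $Q$ forces each $w_i\notin Q$, so each $B/w_iB$ is Noetherian by what was just proved, and each $B/w_i^{e_i}B$ inherits Noetherianness from the finite filtration $0\subset w_i^{e_i-1}B/w_i^{e_i}B\subset\cdots\subset B/w_i^{e_i}B$ whose successive quotients are isomorphic to $B/w_iB$. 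In the UFD $B$ the pairwise coprime ideals $w_i^{e_i}B$ satisfy $\bigcap_i w_i^{e_i}B=gB$, so $B/gB$ embeds into $\prod_i B/w_i^{e_i}B$ and is therefore Noetherian. Then $I/gB$ is finitely generated, and hence so is $I$.

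The main obstacle in this plan is verifying the unit-ideal condition $wR^*[1/x]+(y,z)R^*[1/x]=R^*[1/x]$ corresponding to $w\notin Q$; this reduces to the explicit computation $R^*/(y,z)R^*\cong k[[x]]$, whose localization at $x$ is a field, combined with the contraction identity $Q=(y,z)R^*\cap B$. Once this is in hand, Theorem~\ref{Noeth} does all the remaining work, and the UFD structure reduces the ideal version of the statement to the prime-element version.
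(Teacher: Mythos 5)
Your argument for the equivalence is essentially the paper's: handle $wB = xB$ separately, verify the hypotheses of Theorem~\ref{Noeth} with $a = x$ and $I = wB$ (the paper is terser here and in fact cites Proposition~\ref{nfl}.1 where you more accurately cite~\ref{nfl}.2 for the nonflat locus of $B\to R^*[1/x]$), and observe that $wR^*[1/x]+(y,z)R^*[1/x]=R^*[1/x]$ via $R^*/(y,z)R^*\cong k[[x]]$. The converse is the same one-line observation that a f.g. $Q/wB$ would force $Q$ to be f.g.

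For the final assertion your route genuinely differs from the paper's. The paper invokes the classical Cohen-type fact that an ideal maximal among non-finitely-generated ideals is prime, reducing immediately to prime ideals $P\not\subseteq Q$ and then picking a prime element $w\in P\setminus Q$. You instead argue directly: for $I\not\subseteq Q$ pick $g\in I\setminus Q$, factor $g=w_1^{e_1}\cdots w_n^{e_n}$ in the UFD $B$, and deduce that $B/gB$ is Noetherian, whence $I$ is f.g.\ via $gB\subseteq I$. This avoids the maximal-non-f.g.-ideal-is-prime lemma at the cost of a short CRT-style argument, so it is slightly more self-contained but also slightly longer. One point to tighten: after establishing $\bigcap_i w_i^{e_i}B = gB$ you say $B/gB$ ``embeds into $\prod_i B/w_i^{e_i}B$ and is therefore Noetherian.'' Since $B$ is local the $w_i^{e_i}B$ are not comaximal, so this embedding is a proper inclusion of rings and a subring of a Noetherian ring need not be Noetherian. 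The step is saved by working with $B$-modules: $\prod_i B/w_i^{e_i}B$ is a Noetherian $B$-module (a finite product of cyclic modules over Noetherian quotients), hence its $B$-submodule $B/gB$ is a Noetherian $B$-module, and since ideals of $B/gB$ are exactly its $B$-submodules, $B/gB$ is a Noetherian ring. You should make this module-theoretic reading explicit; as written the phrase invites the false ``subring of Noetherian is Noetherian'' reading.
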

\begin{proof} If $w \in Q$, then $B/wB$  is not Noetherian since $Q$ is not finitely
generated. Assume $w \notin Q$. Since $B/xB$ is known to be Noetherian, we may
assume that $wB \ne xB$. By Proposition~\ref{nfl}.1,  $QR^*[1/x] = (y,z)R^*[1/x]$
defines the nonflat locus of
$\varphi: B \to R^*[1/x]$. Since $wR^*[1/x]+(y,z)R^*[1/x]=R^*[1/x]$,
Theorem~\ref{Noeth} with $I = wB$  and $a = x$  implies that $B/wB$ is Noetherian.

For the second statement, we use that every nonfinitely generated ideal is contained in
an ideal maximal with respect to not being finitely generated and the latter ideal is prime.
Thus
it suffices to show every prime ideal $P$ not contained in $Q$ is finitely generated.
If $P \not\subseteq Q$, then, since $B$ is a UFD,  there exists a prime element
$w \in P \setminus Q$. By the first statement, $B/wB$ is Noetherian, and so $P$ is
finitely generated.
\end{proof}

\begin{theorem}\label{litnonN} Assume the notation of Example~\ref{16.5.4de}.
Let $w$ be a prime element of $R$ with $w \in (y,z)k[x,y,z]$. If $w$ is linear in
either $y$ or $z$, then $Q/wB$ is the unique nonfinitely generated prime
ideal of $B/wB$. Thus $Q$ is the unique nonfinitely generated prime ideal of $B$ that
contains $w$.
\end{theorem}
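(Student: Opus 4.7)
By Corollary~\ref{litNoeth}, every nonfinitely generated prime ideal of $B$ is contained in $Q$. Since $w \in (y,z)k[x,y,z]\subseteq (y,z)R^*\cap B = Q$, we have $w \in Q$, so $Q/wB$ is a nonfinitely generated prime of $\overline B := B/wB$. To prove uniqueness, it suffices to show that every height-two prime $P$ of $B$ with $wB \subsetneq P \subsetneq Q$ is finitely generated. The plan is to use Proposition~\ref{homimB} to realize $\overline B$ as an approximation domain, then exploit the linearity hypothesis to reduce to the three-dimensional ring of Example~\ref{16.5.3de}, where Theorem~\ref{16.5.2}(6) pins down the nonfinitely generated primes.

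By Proposition~\ref{homimB}, $\overline B$ is the approximation domain built over $\overline R := R/wR$ using the transcendental element $\overline f = \overline{y\tau+z\sigma}$. Assume without loss of generality that $w$ is linear in $z$ and write $w = za+yc$ with $a,c \in k[x,y]$, $\gcd(a,c) = 1$, and $y\nmid a$ (all forced by the primality of $w$ in $R$). The relation $\overline{za+yc} = 0$ in $\overline B$ gives
\[
  \overline a\,\overline f \;=\;\overline a\,\overline y\,\overline\tau+\overline a\,\overline z\,\overline\sigma \;=\;\overline y\bigl(\overline a\,\overline\tau-\overline c\,\overline\sigma\bigr) \;=\;\overline y\,\overline g,
\]
where $\overline g := \overline{a\tau-c\sigma}$. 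The element $\overline g$ lies in $x\,\overline{R^*}$ and is transcendental over $\mathrm{Frac}(\overline R)$, since $\tau,\sigma$ are algebraically independent over $K = k(x,y,z)$ and at least one of $a,c$ is nonzero. When $a(0,0) \ne 0$, so $\overline a$ is a unit, the assignment $\overline z \mapsto -y\overline c/\overline a$ gives isomorphisms $\overline R \cong k[x,y]_{(x,y)}$ and $\overline{R^*} \cong k[y]_{(y)}[[x]]$, and $\overline f = \overline y\,\overline\rho$ with $\overline\rho := \overline g/\overline a \in x\,\overline{R^*}$. This realizes $\overline B$ as an instance of Examples~\ref{16.5.1} with $m = 1$, $p_1 = y$, and transcendental element $\overline\rho$; Theorem~\ref{16.5.2}(6) then states that the unique nonfinitely generated prime of $\overline B$ is $\overline y\,\overline{R^*}\cap\overline B$, and under the identifications this equals $(\overline y,\overline z)\overline{R^*}\cap\overline B = Q/wB$. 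The case $c(0,0) \ne 0$ is handled by symmetric argument with the roles of $y$ and $z$ interchanged.

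The main obstacle is the degenerate sub-case $a(0,0) = c(0,0) = 0$, in which both $\overline a$ and $\overline c$ lie in the maximal ideal of $\overline R$ and $\overline R$ becomes a non-regular two-dimensional Noetherian local domain (as in $w = xz + y^2$, where $\overline R$ has a node singularity). Here the reduction to Example~\ref{16.5.3de} is unavailable, and one argues directly. A candidate height-two prime $P$ strictly between $wB$ and $Q$ contains $w = za + yc$, and the coprimality $\gcd(a,c) = 1$ together with the domain structure of $B/P$ forces $(y,z)B \subseteq P$ through case analysis: if $y\notin P$ then $y^2\notin P$, whence manipulation of $za = -yc\pmod P$ using the specific form of $a$ and $c$ (and the nondivisibility conditions $y\nmid a$, and $x\nmid c$ up to swapping) leads to $x \in P$ or $z \in P$, each contradicting $P \subsetneq Q$. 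Since $(y,z)B$ has height two and $Q$ has height three with no prime of $B$ strictly between, it follows that $P = (y,z)B$, which is finitely generated.
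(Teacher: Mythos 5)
Your main case ($a(0,0)\ne 0$) follows essentially the paper's route: reduce modulo $wB$ via Proposition~\ref{homimB}, identify $\overline R\cong k[x,y]_{(x,y)}$, and recognize $\overline B$ as a two-variable approximation domain over $\overline R$ with $\overline f = \overline y\cdot(\text{transcendental})$. One technical imprecision: the element you call $\overline\rho = \overline\tau - (\overline c/\overline a)\overline\sigma$ lies in $x\overline{R^*}=xk[y]_{(y)}[[x]]$, not in $xk[[x]]$, so $\overline B$ is not literally "an instance of Examples~\ref{16.5.1}" as you assert, and Theorem~\ref{16.5.2}(6) does not directly apply. The paper avoids this by normalizing $w$ to $z-yg(x,y)$ and invoking Proposition~\ref{Bufd}, whose hypothesis allows the transcendental element to lie in $aR^*$ rather than $ak[[a]]$; this difference matters for the claim you are citing.

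The more serious problem is your treatment of the extra cases. The paper's phrase "the coefficient of $z$ is 1 and therefore $w=z-yg(x,y)$" shows that "linear in $z$" is meant to say the coefficient of $z$ is a unit of $R$, so $w\notin\m_R^2$ and $\overline R$ is regular; your Cases 2 and 3 are therefore outside the intended hypothesis. Moreover, the arguments you offer for them do not hold up. Case 2 is not symmetric: $a, c\in k[x,y]$, so swapping $y$ and $z$ does not produce a polynomial of the same shape, and $c(0,0)\ne 0$ does not make $w$ linear in $y$ (e.g.\ $w=xz+y+y^3$). Case 3 contains an outright error: you claim that obtaining $z\in P$ "contradicts $P\subsetneq Q$", but $z\in Q$, so $z\in P$ contradicts nothing; and the "manipulation of $za=-yc\pmod P$" that is supposed to produce the dichotomy $x\in P$ or $z\in P$ is never actually carried out. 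In the degenerate case $\overline R$ is not even a UFD, Proposition~\ref{Bufd} is unavailable, and the structural argument collapses — this is exactly what the linearity hypothesis is there to rule out.
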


\begin{proof}
Let $\overline{\phantom{x}}$ denote image under the canonical map $\pi:R^*\to R^*/wR^*$. We may assume that $w$ is linear in $z$, that the coefficient of $z$ is $1$ and therefore that
$w=z-yg(x,y),$
where  $g(x,y)\in k[x,y]$.  Thus
$\overline R\cong k[x,y]_{(x,y)}$.
By Proposition~\ref{homimB} $\overline B$ is the approximation domain over $\overline R$ with respect to the transcendental  element $$\overline f=\overline y\cdot\overline\tau+\overline z\cdot\overline\sigma=\overline y\cdot\overline\tau+\overline y\cdot\overline{g(x,y)}\cdot\overline\sigma .$$
The setting of
Proposition~\ref{Bufd} applies with $C = \overline{B}$, the underlying ring $R$ replaced by $\overline R$,   and $a = \overline{x}$. Thus the ring $\overline B$ is a UFD, and so
 every height-one prime ideal of $\overline B$ is principal. Since $w \in Q$ and $Q$ is not finitely
generated, it follows that $\hgt(\overline Q) = 2$ and that
$\overline Q$ is
the unique nonfinitely generated prime ideal of $\overline B$. Hence the theorem holds.
 \end{proof}

\begin{remarks} \label{16.5.4tr}
It follows from
Proposition~\ref{16.5.15}.5 that every height two prime of $B$ that is not contained
in $Q$ is contracted from a prime ideal of $R^*$.   As  we state in item 7 of Theorem~\ref{16.5.4t},
there are infinitely many height-two prime ideals of $B$ that are contained in $Q$ and are
contracted from $R^*$ and  there are infinitely many height-two prime ideals of $B$ that
are contained in $Q$ and are {\it not} contracted from $R^*$.  In particular infinitely many of each type exist between $zB$ and $Q$, and similarly also infinitely many of each type exist between $yB$ and $Q$.

 Since $B_Q$ is a 3-dimensional regular local ring, for each
height-one prime $p$ of $B$ with $p \subset Q$, the set
$$
\mathcal S_p ~= ~ \{P \in \Spec B ~| ~ p \subset P \subset Q \text{ and } \hgt P = 2 \}
$$
is infinite. The infinite set $\mathcal S_p$ is the disjoint union of the sets $\mathcal S_{pc}$ and
$\mathcal S_{pn}$, where the elements of $\mathcal S_{pc}$ are contracted from $R^*$ and
the elements of $\mathcal S_{pn}$ are not contracted from $R^*$.

 We do not know  whether there exists a height-one prime $p$  contained in $Q$  having
the property that
one of the sets $\mathcal S_{pc}$ or $\mathcal S_{pn}$ is empty. Furthermore if one of these sets is empty,
which one is empty? If there are some such height-one primes $p$ with one of the sets $\mathcal S_{pc}$ or $\mathcal S_{pn}$ empty, which height-one primes are they? It would be interesting
to know the  answers to these questions.
\end{remarks}

The referee of this article asked how Example~\ref{16.5.4de} compares to a specific ring 
constructed using the popular ``$D + M$'' technique of multiplicative ideal theory;  see for
example \cite[p. 95]{Gil}, \cite{Gilb} or \cite{BR}. The ``$D + M$'' construction involves an
integral domain $D$ and a prime ideal $M$ of an extension domain $E$ of $D$ such that 
$D \cap M = (0)$. Then  
$ D + M = \{a+b ~|~ a \in D, ~  b \in M \}$.  Moreover, for $a, a' \in D$ and 
$b, b' \in M$, if $a + b = a' + b'$, then $a = a'$ and $b = b'$. 
Since $D$ embeds in $E/M$,  the
ring $D + M$ may be regarded as a pullback as in \cite{GH} or \cite[p. 42]{LW}. 
The  ring suggested by the referee  is an interesting example that contrasts nicely with 
Example~\ref{16.5.4de}.  We describe it in Example~\ref{referee}.

\begin{example} \label{referee}
Let $B$ be the ring of Example~\ref{16.5.3de}. Then $B = k + \m_B$ in the notation 
of Example~\ref{16.5.3de}. Assume the field $k$ is the field of fractions of a DVR $V$,
and let $t$ be a generator of the maximal ideal of $V$. Define 
$$ 
C ~ := ~  V ~ + ~  \m_B ~= ~ \{~ a ~+~ b ~~|~ ~ a ~\in ~ V,~ ~ b ~\in~ \m_B ~\}.
$$

The integral domain $C$ has the following properties:
\begin{enumerate}
\item The maximal ideal $\m_B$ of $B$ is also a prime ideal of $C$,  and 
$C/\m_B \cong V$.
\item $C$ has a unique maximal ideal $\m_C$;  moreover,  $\m_C  = tC$. 
\item $\m_B = \bigcap_{n=1}^\infty t^nC$,  and  $B = C_{\m_B} = C[1/t]$.
\item Each $P \in \Spec C$ with $P \ne \m_C$
is contained in $\m_B$; thus $P \in \Spec B$. 
\item $\dim C = 4$ and 
 $C$ has a unique prime ideal of height $h$, for $h = 2, 3$ or $4$.
\item  $\m_C$ is the only nonzero  prime ideal of $C$ that is finitely generated.
Indeed, every nonzero proper ideal of $B$ is an ideal of $C$ that is not finitely 
generated.
\end{enumerate}
Thus $C$ is a non-Noetherian non-catenary four-dimensional local domain.  
\end{example}
\begin{proof}
Since $C$ is a subring of $B$,  
$\m_B \cap V = (0)$ and  $V\m_B = \m_B$,   item 1 holds. 
We have  $C/(tV + \m_B) = V/tV$.  Thus  
$tV + \m_B$ is a maximal ideal of $C$. 
Let  $b \in \m_B$.   
Since $1 + b$ is a unit of 
the local ring $B$,  we have
$$
\frac{1}{1+b}  ~=~ 1 - \frac{b}{1+b} \quad \text{ and } \quad \frac{b}{1+b} ~\in ~\m_B. 
$$
Hence  $1 + b$ is a unit of $C$.
Let 
$a + b \in C \setminus (tV + \m_B)$, where $a \in V \setminus tV$ and $b \in \m_B$. 
Then $a$ is a unit of $V$ and thus a unit of $C$. Moreover,  $a^{-1}(a+b) = 1 + a^{-1}b$
and $a^{-1}b \in \m_B$. Therefore $a + b$ is a unit of $C$. We 
conclude that $\m_C := tV + \m_B$ is the unique maximal ideal of $C$. 
Since $t$ is a unit of $B$,
we have $\m_B = t\m_B$. Hence $\m_C = tV + \m_B = tC$. 
This proves item~2.  

For item~3, since $t$ is a unit of $B$, we have  $\m_B = t^n\m_B \subseteq t^nC$
for all $n \in \N$. Thus $\m_B \subseteq \bigcap_{n=1}^\infty t^nC$. If 
$a + b \in \bigcap_{n=1}^\infty t^nC $  with $a \in V$ and $b \in \m_B$, 
then 
$$ 
b ~\in ~ \bigcap_{n=1}^\infty t^nC ~\implies ~ a ~\in ~(\bigcap_{n=1}^\infty t^nC) \cap V ~
= ~ \bigcap_{n=1}^\infty t^nV ~= ~ (0).
$$
Hence $\m_B = \bigcap_{n=1}^\infty t^nC$.  Again using  $t\m_B = \m_B$, we obtain  
$$
C[1/t]~ = ~ V[1/t] ~ + ~ \m_B ~ =  ~k ~ + ~\m_B ~ = ~B.
$$
Since $t \notin \m_B$, we have $B = C[1/t] \subseteq C_{\m_B} \subseteq B_{\m_B} = B$. This 
proves item~3.

For $P$ as in  item~4, we have $P \subsetneq tC$. Since $P$ is a prime ideal of $C$, it 
follows that $P = t^nP$ for each $n \in \N$. By item~3, $P \subseteq \m_B$, and it follows
that $P \in \Spec B$. Item~5 now follows from item~4 and the structure of $\Spec B$.

For item~6, let $J$ be a nonzero proper ideal of $B$. Since $t$ is a unit of $B$,
we have $J = tJ$. This  implies by Nakayama's Lemma that $J$ as an ideal of $C$ 
is not finitely generated; see \cite[Lemma~1]{BR}. Thus item~6 follows from item~4. 

By item~6,  $C$ is non-Noetherian. Since $(0) \subsetneq xB \subsetneq \m_B \subsetneq tC$ 
is a saturated chain of prime ideals of $C$ of length 3, and 
$(0) \subsetneq yB \subsetneq Q \subsetneq \m_B \subsetneq tC$ 
is a saturated chain of prime ideals of $C$ of length 4, the ring $C$ is not catenary.
\end{proof}

\end{document}